\def\titlerunning#1{\gdef\titrun{#1}}
\def\author#1{\gdef\autrun{\def\and{\unskip, }#1}\gdef\@author{#1}}
\def\address#1{{\def\and{\\\hspace*{18pt}}\renewcommand{\thefootnote}{}%
\footnote {#1}}%
\markboth{\autrun}{\titrun}}
\def\email#1{e-mail: #1}
\def\subjclass#1{{\renewcommand{\thefootnote}{}%
\footnote{\emph{Mathematics Subject Classification (2010):} #1}}}
\def\keywords#1{\par\medskip
\noindent\textbf{Keywords.} #1}
\theoremstyle{plain}
\newtheorem{proposition}{Proposition}[section]
\newtheorem{lemma}[proposition]{Lemma}
\newtheorem{theorem}{Theorem}
\newtheorem{corollary}[proposition]{Corollary}
\newtheorem{fact}{Fact}[section]
\newtheorem*{claim}{Claim}
\newtheoremstyle{addendumstyle}{\topsep}{\topsep}{\itshape}{}{\bfseries}{.}{.5em plus 1pt minus 1pt}{#1 #2 to #3}
\theoremstyle{addendumstyle}
\newtheorem{addendum}{Addendum}
\theoremstyle{definition}
\newtheorem{definition}{Definition}[section]
\theoremstyle{remark}
\newtheorem{Part}{Part}
\numberwithin{equation}{section}
\newcommand{\abs}[1]{\mathopen\lvert#1\mathclose\rvert}
\newcommand{\bigabs}[1]{\bigl\lvert#1\bigr\rvert}
\newcommand{\norm}[1]{\mathopen\lVert#1\mathclose\rVert}
\newcommand{\floor}[1]{\lfloor#1\rfloor}
\newcommand{\N}{{\mathbb N}}
\newcommand{\R}{{\mathbb R}}
\renewcommand{\S}{{\mathbb S}}
\newcommand{\cE}{\mathcal{E}}
\newcommand{\cK}{\mathcal{K}}
\newcommand{\cS}{\mathcal{S}}
\newcommand{\cT}{\mathcal{T}}
\newcommand{\cU}{\mathcal{U}}
\DeclareMathOperator{\supp}{supp}
\DeclareMathOperator{\Supp}{Supp}
\DeclareMathOperator{\jac}{jac}
\DeclareMathOperator{\dist}{dist}
\DeclareMathOperator{\Dist}{Dist}
\newcommand{\Id}{\mathrm{Id}}
\DeclareMathOperator{\Int}{int}
\newcommand{\dif}{\,\mathrm{d}}
\newcounter{cte}
\newcommand{\Constant}{\refstepcounter{cte} C_{\thecte}}
\newcommand{\NewConstant}{\setcounter{cte}{1} C_{\thecte}}
\newcommand{\SameConstant}{C_{\thecte}}
\begin{document}




\titlerunning{Density for higher order Sobolev spaces into compact manifolds}

\title{Strong density for higher order Sobolev spaces into compact manifolds}

\author{Pierre Bousquet
\and{}
Augusto C. Ponce
\and{}
Jean Van Schaftingen
}


\maketitle

\address{P.~Bousquet: 
Universit{\'e} de Toulouse,
Institut de Math\'ematiques de Toulouse, CNRS UMR5219, 
UPS IMT,
F-31062 Toulouse Cedex 9,
France; \email{Pierre.Bousquet@math.univ-toulouse.fr}
\and
A.~C.~Ponce: 
Universit{\'e} catholique de Louvain,
Institut de Recherche en Math{\'e}matique et Physique,
Chemin du cyclotron 2, L7.01.02,
1348 Louvain-la-Neuve,
Belgium; \email{Augusto.Ponce@uclouvain.be}
\and
J.~Van~Schaftingen: 
Universit{\'e} catholique de Louvain,
Institut de Recherche en Math{\'e}matique et Physique,
Chemin du cyclotron 2, L7.01.02,
1348 Louvain-la-Neuve,
Belgium; \email{Jean.VanSchaftingen@uclouvain.be}
}

\subjclass{58D15 (46E35, 46T20)}

\begin{abstract}
Given a compact manifold \(N^n\), an integer \(k \in \mathbb{N}_*\) and an exponent \(1 \le p < \infty\), we prove that the class \(C^\infty(\overline{Q}^m; N^n)\) of smooth maps on the cube with values into \(N^n\) is dense with respect to the strong topology in the Sobolev space \(W^{k, p}(Q^m; N^n)\) when the homotopy group \(\pi_{\lfloor kp \rfloor}(N^n)\) of order \(\lfloor kp \rfloor\) is trivial. 
We also prove the density of maps that are smooth except for a set of dimension \(m - \lfloor kp \rfloor - 1\), without any restriction on the homotopy group of \(N^n\).\\[5pt]
\emph{Published in J. Eur. Math. Soc. (JEMS) \textbf{17} (2015), 763--817, by the European Mathematical Society.}
{}

\keywords{Strong density; Sobolev maps; compact manifolds; higher order Sobolev spaces; homotopy; topological singularity}
\end{abstract}


\section{Introduction}

There are two natural approaches to define Sobolev maps  with values in a compact manifold. 
More precisely, let \(N^n\) be a compact connected smooth manifold of dimension \(n\) imbedded in \(\R^\nu\) for some  \(\nu \ge 1\) \cite{Nash-1954, Nash-1956},  \(k \in \N_*\) and \(1 \le p < +\infty\). One can first define \( W^{k, p}(Q^m; N^n) \) as the set
\[
\big\{ u \in W^{k, p}(Q^m; \R^\nu) : u \in N^n \text{ a.e.}  \big\},
\]  
where \(Q^{m}\subset \R^m\) is the open unit cube.
The other possibility is to define \(H^{k, p} (Q^m; N^n)\) as the completion of the class of smooth maps \(C^\infty(\overline{Q}^m; N^n)\) with respect to the Sobolev metric
\[
  d_{k,p} (u, v) = \norm{u - v}_{L^p(Q^m)} + \sum_{i = 1}^k \norm{D^i u - D^i v}_{L^p(Q^m)}.
\]

These spaces are the natural framework for the study of harmonic maps  \cite{Moser2005, LinWang2008, Simon1996, GiaquintaMucci2006},
biharmonic maps \cite{ChangWangYang1999, Moser2008, Scheven2008, Struwe2008, Urakawa2011} and polyharmonic maps \cite{AngelsbergPumberger2009, GastelScheven2009, GoldsteinStrzeleckiZatorska2009, GongLammWang2012, LammWang2009} with values into manifolds.
They also arise in some physical models \cite{Brezis2003, Mermin1979}.
For instance, maps into the sphere, the projective space and other manifolds appear in liquid cristal models
\cite{Mucci2012, MucciNicolodi2012, BallZarnescu2011, BrezisCoronLieb1986}.

In contrast with the real-valued case \cite{Deny-Lions, Meyers-Serrin}, these spaces may be different. 
For instance, \(H^{1, p} (Q^2; \S^1) = W^{1, p} (Q^2; \S^1)\) if and only if \(p \ge 2\) \cite[Theorem~3]{BethuelZheng}. 
The goal of this paper is to determine when \(H^{k, p}(Q^m; N^n) = W^{k, p}(Q^m; N^n)\).

This always happens when \(kp \ge m\) \cite[Section~4, Proposition]{SchoenUhlenbeck}, as \(W^{k, p} (\R^m) \cap L^\infty(\R^m)\) is imbedded into the space of functions 
with vanishing mean oscillation \(\mathrm{VMO} (\R^m)\) \cite[Example~1, Eq.~(7)]{Brezis-Nirenberg}.
The main result of this paper completely solves the problem in the case \(kp < m\).
It is remarkable that such an analytical question has a purely topological answer:

\begin{theorem}
\label{theoremDensityManifoldMain}
If \(kp < m\), then \(H^{k, p}(Q^m; N^n) = W^{k, p}(Q^m; N^n)\) if and only if \(\pi_{\floor{kp}}(N^n) \simeq \{0\}\).
\end{theorem}

We denote by \(\floor{kp}\) the integral part of \(kp\) and by \(\pi_{\floor{kp}}(N^n)\) the \(\floor{kp}\)th homotopy group of \(N^n\);
the topological condition \(\pi_{\floor{kp}}(N^n) \simeq \{0\}\) means that every continuous map \(f : \S^{\floor{kp}} \to N^n\) on the \(\floor{kp}\) dimensional sphere is homotopic to a constant map.
The necessity of this assumption has been known for some time \cite[Theorem~2]{BethuelZheng} \cite[Theorem~3]{Escobedo} \cite[Section~4, Example]{SchoenUhlenbeck} \cite[Theorem~4.4]{Mironescu}.

The case \(k = 1\) of Theorem~\ref{theoremDensityManifoldMain} is the main result of Bethuel's seminal work \cite[Theorem~1]{Bethuel} (see also \cite{Hang-Lin-2001, Hang-Lin}).
The case \(k \ge 2\) cannot be handled by merely adapting Bethuel's tools due to the rigidity of \(W^{k, p}\) and requires new ideas. 
A typical issue one faces when dealing with two maps in \(W^{k,p}\) is that they cannot be glued together under the sole assumption that their traces coincide.
Results concerning strong density of smooth maps in higher order Sobolev maps have been known  
in some cases situations where \(N^n\) is a sphere \cite[Theorem~5]{Mironescu} \cite[Theorem~4]{Brezis-Mironescu} \cite[Theorem~2]{Escobedo}. 

\medskip

In the case \(\pi_{\floor{kp}} (N^n) \not \simeq \{0\}\), we prove that \(W^{k, p} (Q^m;N^n)\) is the completion of a set of maps that are smooth outside a small singular set.
For this purpose, given \(i \in \{0, \dotsc, m-1\}\) we denote by \(R_i(Q^m; N^n)\) the set of maps \(u : \overline{Q}^m  \to N^n\) which are smooth on \(\overline{Q}^m \setminus T\),
where \(T\) is a finite union of \(i\) dimensional planes, and such that for every \(j \in \N_*\) and \(x \in \overline{Q}^m \setminus T\), 
\[
  \abs{D^j u(x)} \le \dfrac{C}{\dist{(x, T)}^j}
\]
for some constant \(C \ge 0\) depending on \(u\) and \(j\).

\begin{theorem}
\label{theoremDensityManifoldNontrivial}
If \(kp < m\) and \(\pi_{\floor{kp}} (N^n) \not \simeq \{0\}\), then \(W^{k, p}(Q^m; N^n)\) is the completion of \(R_{i}(Q^m; N^n)\) with respect to the Sobolev metric \(d_{k,p}\) if and only if \(i = m-\floor{kp}-1\).
\end{theorem}

This result was known for an arbitrary manifold \(N^n\) only in the case \(k = 1\) \cite[Theorem~2]{Bethuel} (see also \cite[Theorem~1.3]{Hang-Lin}). 
It is a fundamental tool in the study of the weak density of smooth maps in Sobolev spaces and in the study of topological singularities of Sobolev maps \cite{Hardt-Riviere, PakzadRiviere2003, HangLin2003, GiaquintaModicaSoucek1998, BourgainBrezisMironescu2005, GiaquintaMucci2006}.
Counterparts of Theorems~\ref{theoremDensityManifoldMain} and~\ref{theoremDensityManifoldNontrivial} for fractional Sobolev spaces \(W^{s, p}(Q^{m}; N^{n})\) such that \(0 < s < 1\) have been investigated by Brezis and Mironescu~\cite{BrezisMironescu2015}.

\medskip

We explain the strategy of the proofs of Theorems~\ref{theoremDensityManifoldMain} and \ref{theoremDensityManifoldNontrivial} under the additional assumption \(kp > m - 1\). 
Given a decomposition of \(Q^m\) in cubes of size \(\eta > 0\), we distinguish them between \emph{good cubes} and \emph{bad cubes} --- a notion reminiscent from \cite{Bethuel} --- as follows:
for a map \(u \in W^{k, p}(Q^m; N^n)\) and a cube \(\sigma^m_\eta\) in \(Q^m\) of radius \(\eta > 0\), \(\sigma^m_\eta\) is a \emph{good cube} if
\[
  \frac{1}{\eta^{m - k p}} 
  \int\limits_{\sigma^m_\eta} 
  \abs{D u}^{k p}
   \lesssim 1,
\]
which means that \(u\) does not oscillate too much in \(\sigma^m_\eta\); otherwise \(\sigma^m_\eta\) is a \emph{bad cube}. 
The main steps in the proof of Theorem~\ref{theoremDensityManifoldNontrivial} are the following:

\begin{compactdesc}
\item[Opening] We construct a map \(u_\eta^\mathrm{op}\) which is continuous on a neighborhood of the \(m-1\) dimensional faces of the bad cubes, and equal to \(u\) elsewhere.
This map, which takes its values into \(N^n\), is close to \(u\) with respect to the \(W^{k, p}\) distance because there are not too many bad cubes. 
Since \(k p > m - 1\), \(W^{k, p}\) maps are continuous on faces of dimension \(m-1\).
\item[Adaptive smoothing] By convolution with a smooth kernel, we then construct a smooth map \(u_\eta^\mathrm{sm} \in W^{k,p}(Q^m; N^n)\).  
The scale of convolution is chosen to be of the order of \(\eta\) on the good cubes, and close to zero in a neighborhood of the faces of the bad cubes.  
On the union of these sets, we are thus ensuring that \(u_\eta^\mathrm{sm}\) takes its values in a small neigborhood of \(N^n\). 
\item[Thickening] We propagate diffeomorphically the values of \(u_\eta^\mathrm{sm}\) near the faces of the bad cubes to the interior of these cubes. The resulting map \(u_\eta^\mathrm{th}\) coincides 
with \(u_\eta^\mathrm{sm}\)  on the good cubes and near the faces of the bad cubes, is close to \(u\) with respect to the \(W^{k, p}\) distance and takes its values in a neighborhood of \(N^n\). This construction creates at most one singularity at the center of each bad cube.
\end{compactdesc}

The map obtained by projecting \(u_\eta^\mathrm{th}\) from a neighborhood of \(N^n\) into \(N^n\) itself belongs to the class \(R_{0}(Q^m; N^n)\) and converges strongly to \(u\) with respect to the Sobolev distance \(d_{k, p}\) as \(\eta \to 0\).
This argument works regardless of the \(\floor{kp}\)th homotopy group of \(N^n\); see Theorem~\ref{theoremDensityManifoldNontrivialwithouttopologicalcondition} in Section~\ref{sectionDensityR} below.

The sketch of the proof we have announced in a previous work \cite{Bousquet-Ponce-VanSchaftingen} for \(k=2\) and \(2p > m-1\) is based on the strategy above but was organized differently following \cite{Ponce-VanSchaftingen} (see also \cite{Gastel-Nerf}).
The opening technique was introduced by Brezis and Li~\cite{Brezis-Li} in their study of homotopy classes of \(W^{1, p} (Q^m; N^n)\). 

\medskip

The proof of Theorem~\ref{theoremDensityManifoldMain} in the case \(kp \ge m-1\) relies on the fact that \(R_0(Q^m; N^n)\) is strongly dense in \(W^{k, p}(Q^m; N^n)\) with respect to the Sobolev distance \(d_{k, p}\).
The approximation of a map \(u \in R_0(Q^m; N^n)\) by a map in \(C^\infty(\overline{Q}^m; N^n)\) in this case goes as follows:

\begin{compactdesc}
\item[Continuous extension property]
By the assumption on the homotopy group of \(N^n\),  for any \(\mu < 1\) there exists a smooth map \(u^\textrm{ex}_\mu\) with values into \(N^n\) which coincides with \(u\) outside a neighborhood of radius \(\mu\eta\) of the singular set of \(u\). As a drawback, \(u^\text{ex}_\mu\) may be far from \(u\) with respect to the \(W^{k, p}\) distance. 
\item[Shrinking] 
We propagate diffeomorphically the values of \(u^\text{ex}_\mu\) in the neighborhood of radius \(\mu\eta\) of each singularity of \(u\) into a smaller neighborhood of radius \(\tau\mu\eta\) for \(\tau < 1\). Since \(kp < m\), we obtain a map \(u^\text{sh}_{\tau, \mu}\) which is still smooth but now close to \(u\) with respect to the \(W^{k, p}\) distance. This construction is reminiscent of thickening but does not create singularities.
\end{compactdesc}

The smooth map \(u^\text{sh}_{\tau, \mu}\) converges strongly to \(u\) with respect to the \(W^{k, p}\) distance as \(\tau \to 0\) and \(\mu \to 0\).
The role of this continuous extension property in the case of \(W^{1, p}\) approximation of maps \(u\) with higher dimensional singularities has been clarified by Hang and Lin~\cite{Hang-Lin}.

\tableofcontents

\section{Opening}
For \(a \in \R^m\) and \(r>0\), we denote by \(Q^{m}_r(a)\) the cube of radius  \(r\) with center \(a\); by radius of the cube we mean half of the length of one of its edges. When \(a = 0\), we abbreviate \(Q_r^m = Q^{m}_r(0)\).  

\begin{definition}
A family of closed cubes \(\mathcal{S}^m\) is a cubication of \(A \subset \R^m\) if all cubes have the same radius, if \(\bigcup\limits_{\sigma^m \in \mathcal{S}^m} \sigma^m = A\) and if for every \(\sigma^m_1, \sigma^m_2 \in \mathcal{S}^m\) which are not disjoint, \(\sigma^m_1 \cap \sigma^m_2\) is a common face of dimension \(i \in \{0, \dotsc, m\}\).
\end{definition}

The radius of a cubication is the radius of any of its cubes.

\begin{definition}
Given a cubication \(\mathcal{S}^m\) of \(A \subset \R^m\) and \(\ell \in \{0, \dotsc, m\}\), the skeleton of dimension \(\ell\) is the set \(\mathcal{S}^\ell\) of all \(\ell\) dimensional faces of all cubes in \(\mathcal{S}^m\). A subskeleton of dimension \(\ell\) of \(\cS^m\) is a subset of \(\cS^\ell\).
\end{definition}

Given a skeleton \(\mathcal{S}^\ell\), we denote by \(S^\ell\) the union of all elements of \(\mathcal{S}^\ell\),
\[
S^\ell = \bigcup_{\sigma^\ell \in \mathcal{S}^\ell} \sigma^\ell.
\]

For a given map \(u \in W^{k, p} (U^m; \R^\nu)\) on some subskeleton \(\mathcal{U}^m\) and for any \(\ell \in \{0, \dots, m-1\}\), we are going to construct a map \(u \circ \Phi \in W^{k, p} (U^m; \R^\nu)\) which is constant along the normals to \(U^\ell\) in a neighborhood of \(U^\ell\). In this region, the map \(u \circ \Phi\) will thus be essentially a \(W^{k, p}\) map of \(\ell\) variables. 
Hence, if \(k p > \ell\), then \(u \circ \Phi\) will be continuous there, whereas in the critical case \(\ell = kp\), the map \(u \circ \Phi\) need not be continuous but will still have vanishing mean oscillation. 
In this construction the map \(\Phi\) depends on \(u\) and is never injective.
This idea of opening a map has been inspired by a similar construction of Brezis and Li~\cite{Brezis-Li}.

Given a map \(\Phi : \R^m \to \R^m\), we denote by \(\Supp{\Phi}\) the \emph{geometric support of \(\Phi\)}, namely the closure of the set \(\{x \in \R^m : \Phi(x) \ne x\}\). This should not be confused with the analytic support \(\supp{\varphi}\) of a function \(\varphi:\R^m\to \R\)  which is the closure of the set \(\{x \in \R^m : \varphi(x) \ne 0\}\).

\begin{proposition}\label{openingpropGeneral}
Let \(\ell \in \{0, \dotsc, m-1\}\), \(\eta > 0\), \(0 < \rho < \frac{1}{2}\), and \(\mathcal{U}^\ell\) be a subskeleton of \(\R^m\) of radius \(\eta\). Then, for every \(u\in W^{k, p}(U^\ell + Q^m_{2\rho\eta}; \R^\nu)\), there exists a smooth map \(\Phi : \R^m \to \R^m\) such that
\begin{enumerate}[$(i)$]
\item %
for every \(i \in \{0, \dotsc, \ell\}\) and 
for every \(\sigma^i \in \mathcal{U}^i\), \(\Phi\) is constant on the  \(m-i\) dimensional cubes of radius \(\rho\eta\) which are orthogonal to \(\sigma^i\),
\label{itemgenopeningprop1}
\item \(\Supp{\Phi} \subset U^\ell + Q^m_{2\rho\eta}\)
and \(\Phi(U^\ell + Q^m_{2\rho\eta}) \subset U^\ell + Q^m_{2\rho\eta}\),
\label{itemgenopeningprop4}
\item \(u \circ \Phi \in W^{k, p}(U^\ell + Q^m_{2\rho\eta}; \R^{\nu})\),
\label{itemgenopeningprop5}
and for every \(j \in \{1, \dotsc, k\}\),
\begin{equation*}
 \eta^{j} \norm{D^j(u \circ \Phi)}_{L^p(U^\ell + Q^m_{2\rho\eta})} \leq C \sum_{i=1}^j  \eta^{i} \norm{D^i u}_{L^p(U^\ell + Q^m_{2\rho\eta})},
\end{equation*}
for some constant \(C > 0\) depending on \(m\), \(k\), \(p\) and \(\rho\),
\item 
for every \(\sigma^\ell \in \cU^\ell\) and for every \(j \in \{1, \dotsc, k\}\),
\begin{equation*}
 \eta^{j} \norm{D^j(u \circ \Phi)}_{L^p(\sigma^\ell + Q^m_{2\rho\eta})} \leq C' \sum_{i=1}^j  \eta^{i} \norm{D^i u}_{L^p(\sigma^\ell + Q^m_{2\rho\eta})},
\end{equation*}
for some constant \(C' > 0\) depending on \(m\), \(k\), \(p\) and \(\rho\).
\label{itemgenopeningprop6}
\end{enumerate}
\end{proposition}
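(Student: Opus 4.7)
My strategy is to construct $\Phi$ as an iterated composition of localized opening maps, one for each face of $\cU^\ell$, proceeding by increasing dimension $i = 0, 1, \dotsc, \ell$. The single-face building block, up to a rigid motion placing $\sigma^i$ in a coordinate subspace $\R^i \times \{0\}$, has the form
\[
\Phi_{\sigma^i, a}(x', x'') = \bigl(x', \, \Psi_a(x'')\bigr),
\]
where $\Psi_a \colon \R^{m-i} \to \R^{m-i}$ is a smooth retraction of $Q^{m-i}_{2\rho\eta}$ onto itself, identically equal to the constant $a$ on a neighbourhood of $a$ of radius $\rho\eta$ and to the identity near $\partial Q^{m-i}_{2\rho\eta}$, and $a \in \R^{m-i}$ is a small translation parameter. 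By construction $\Phi_{\sigma^i, a}$ is smooth, reduces to the identity off $\sigma^i + Q^m_{2\rho\eta}$, preserves this neighbourhood, and collapses every normal cube $\{b\} + Q^{m-i}_{\rho\eta}$ for $b \in \sigma^i$ to a single point, yielding local versions of (i) and (ii).

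To globalize, I would perform the openings dimension by dimension. Having already opened along all faces of dimension less than $i$, I compose the current map with $\Phi_{\sigma^i, a_{\sigma^i}}$ for each $\sigma^i \in \cU^i$. Two distinct $i$-faces meet only along a face of strictly smaller dimension, where the previously constructed composition already collapses normal fibres, so the new local openings commute with one another away from this exceptional lower-dimensional set and do not disturb the opening property at smaller dimensions. Iterating up to $i = \ell$ produces the desired global smooth $\Phi$, with $\Supp \Phi$ contained in $U^\ell + Q^m_{2\rho\eta}$ and this neighbourhood preserved by $\Phi$.

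The principal obstacle is the Sobolev estimate in (iii)--(iv): because $\Phi_{\sigma^i, a}$ collapses sets of positive measure to lower-dimensional ones, $u \circ \Phi_{\sigma^i, a}$ need not even be defined for a specific $a$ and a generic $u \in W^{k, p}$. I would overcome this by averaging over the translation parameter $a$. The Fa\`a di Bruno formula combined with the scaling bound $\abs{D^j \Phi_{\sigma^i, a}} \le C \eta^{1 - j}$ yields the pointwise inequality
\[
\abs{D^j (u \circ \Phi_{\sigma^i, a})(x)}^p \le C \sum_{s = 1}^{j} \eta^{(s - j) p}\, \abs{D^s u (\Phi_{\sigma^i, a}(x))}^p.
\]
Integrating this in $x$ over $\sigma^i + Q^m_{2\rho\eta}$ and in $a$ over a cube of radius $\rho\eta$, Fubini's theorem together with a change of variables on the non-collapsing part of the domain of $\Phi_{\sigma^i, a}$ gives
\[
\frac{1}{\abs{Q^{m-i}_{\rho\eta}}} \int_{Q^{m-i}_{\rho\eta}} \norm{D^j (u \circ \Phi_{\sigma^i, a})}_{L^p(\sigma^i + Q^m_{2\rho\eta})}^p \dif a \le C \sum_{s = 1}^{j} \eta^{(s - j) p} \norm{D^s u}_{L^p(\sigma^i + Q^m_{2\rho\eta})}^p.
\]
A standard selection argument then furnishes a parameter $a_{\sigma^i}$ realizing (iv), and summing these estimates over $\sigma^i$ at each dimension — using that the neighbourhoods $\sigma^i + Q^m_{2\rho\eta}$ have bounded overlap since $\rho < \tfrac{1}{2}$ — produces (iii). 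In short, the non-injectivity of $\Phi$ is the core difficulty, and the Fubini average over translation parameters is what upgrades the formal chain-rule bound to a genuine $L^p$ estimate.
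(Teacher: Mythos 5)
Your high-level strategy coincides with the paper's: build $\Phi$ inductively by dimension via compositions $\Phi^{i-1}\circ\Phi_{\sigma^i,a}$ of localized shear-type openings, and handle the non-injectivity of $\Phi$ by averaging over a translation parameter (Fubini plus a change of variables) and then selecting a good parameter. This is exactly the mechanism behind the paper's Lemmas~\ref{lemmaOpeningLp}, \ref{lemmaOpeningSobolev} and \ref{lemmaOpeningEstimate}, and your Fa\`a di Bruno/scaling bound is the one used there to convert pointwise derivative estimates into the integral estimate. So the averaging part of your plan is sound and is the paper's main idea.

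There is, however, a genuine gap in your gluing step. You keep the same pair of radii (constant region of radius $\rho\eta$, geometric support of radius $2\rho\eta$) at every dimension $i$, and argue that two adjacent $i$-faces' openings ``commute\ldots where the previously constructed composition already collapses normal fibres.'' But two $i$-faces $\sigma^i_1$ and $\sigma^i_2$ sharing a face $\tau^r$ have supports overlapping in a set of extent $2\rho\eta$ around $\tau^r$, while $\Phi^{i-1}$ only collapses the $(m-r)$-dimensional fibres of $\tau^r$ out to radius $\rho\eta$. On the $\rho\eta$--$2\rho\eta$ annulus around $\tau^r$ the identities $\Phi^{i-1}\circ\Phi_{\sigma^i_1,a_1}=\Phi^{i-1}\circ\Phi_{\sigma^i_2,a_2}$ can fail, so $\Phi^i$ is not well-defined; the smaller-dimensional collapse does not cover the entire overlap of the new supports. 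The paper avoids this by picking a strictly decreasing sequence $\rho = \rho_\ell < \dotsb < \rho_0 < 2\rho$: at step $i$ the opening $\Phi_{\sigma^i}$ is constant inside $Q^{m-i}_{\rho_i\eta}$ and has geometric support inside $Q^{m-i}_{\rho_{i-1}\eta}$, which matches precisely the constant radius $\rho_{i-1}\eta$ of $\Phi^{i-1}$; this nesting is what makes the pasted map well-defined and smooth. Related to this, your building block $\Psi_a$ is constant on $B_{\rho\eta}(a)$, and you average $a$ over a cube of radius $\rho\eta$; the intersection of these constant regions over all such $a$ is empty, so there is no universal cube of radius $\rho\eta$ around $\sigma^i$ on which $\Phi$ is constant, and conclusion~(i) is not actually achieved. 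In the paper this is fixed by restricting the translation $z$ to $Q^{m-\ell}_{\widehat{\rho}\eta}$ with $2\widehat{\rho}<\overline{\rho}-\underline{\rho}$ so that $Q^{m-\ell}_{\underline{\rho}\eta}$ always lies in the collapsed set regardless of $z$.
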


In the case of \(W^{2, p}\) maps, the quantity \(\norm{D(u \circ \Phi)}_{L^p}\) can be estimated in terms of \(\norm{Du}_{L^p}\); hence there is no explicit dependence of \(\eta\). However, concerning the second-order term, estimate in \((\ref{itemgenopeningprop5})\) reads
\begin{equation*}
\norm{D^2 (u \circ \Phi)}_{L^p({U^\ell+Q^m_{2\rho\eta}})} \leq C \norm{D^2 u}_{L^p({U^\ell+Q^m_{2\rho\eta}})} + \frac{C}{\eta} \norm{Du}_{L^p({U^\ell+Q^m_{2\rho\eta}})}.
\end{equation*}
The factor \(\frac{1}{\eta} \) which comes naturally from a scaling argument is one of the differences with respect to the opening of \(W^{1, p}\)  maps. In the proof of Theorem~\ref{theoremDensityManifoldMain}, we shall use the Gagliardo-Nirenberg interpolation inequality to deal with this extra term.

Since the map \(u\) in the statement is defined almost everywhere, the map \(u \circ \Phi\) need not be well-defined by standard composition of maps. By \(u \circ \Phi\), we mean a map \(v\) in \(W^{k, p}\) such that there exists a sequence of smooth maps \((u_n)_{n \in \N}\) converging to
 \(u\) in \(W^{k, p}\) such that \((u_n \circ \Phi)_{n \in \N}\) 
converges to \(v\) in \(W^{k, p}\).  By pointwise convergence, this map \(u \circ \Phi\) inherits several properties of \(\Phi\) and of \(u\). For instance, if \(\Phi\) is constant in a neighborhood of some point \(a\), then so is \(u \circ \Phi\). One can show that under some assumptions on \(\Phi\) which are satisfied in all the cases that we consider \(u \circ \Phi\) does not depend on the sequence  \((u_n)_{n \in \N}\), but we shall not make use of this fact. The only property we shall need from \(u \circ \Phi\) is that its essential range is contained in the essential range of \(u\); this is actually the case in view of Lemma~\ref{lemmaOpeningLp}~\((\ref{OpeningLp2})\) below. In particular, \emph{if \(u\) is a map with values into the manifold \(N^n\), then \(u \circ \Phi\) is also a map with values into  \(N^n\).}

The following proposition is the main tool in the proof of Proposition~\ref{openingpropGeneral}.

\begin{proposition}
\label{openinglemmaGeneral}
Let \(\ell \in \{0, \dotsc, m-1\}\), \(\eta > 0\), \(0 < \underline{\rho} < \overline{\rho}\) and \(A \subset \R^\ell\) be an open set. For every \(u\in W^{k, p}(A \times Q_{\overline{\rho}\eta}^{m - \ell}; \R^\nu)\), there exists a smooth map \(\zeta : \R^{m - \ell} \to \R^{m - \ell}\) such that 
\begin{enumerate}[$(i)$]
\item \(\zeta\) is constant in \(Q_{\underline{\rho}\eta}^{m - \ell}\),
\label{itemopeninglemmaGeneral1} 
\item \(\Supp{\zeta} \subset Q_{\overline{\rho}\eta}^{m - \ell}\) and \(\zeta(Q_{\overline{\rho}\eta}^{m - \ell}) \subset Q_{\overline{\rho}\eta}^{m - \ell}\),
\label{itemopeninglemmaGeneral2} 
\label{itemgenopeninglemma1}
\item if \(\Phi : \R^m \to \R^m\) is defined for every  \(x = (x', x'') \in \R^\ell \times \R^{m - \ell}\) by
\[
\Phi(x) = (x', \zeta(x''))
\]
then \(u \circ \Phi \in W^{k, p}(A \times Q_{\overline{\rho}\eta}^{m - \ell}; \R^{\nu})\),
and for every \(j \in \{1, \dotsc, k\}\),
\label{itemopeninglemmaGeneral3} 
\begin{equation*}
 \eta^{j} \norm{D^j(u \circ \Phi)}_{L^p({A \times Q^{m-\ell}_{\overline{\rho}\eta}})} \leq C \sum_{i=1}^j  \eta^{i} \norm{D^i u}_{L^p({A \times Q^{m-\ell}_{\overline{\rho}\eta}})},
\end{equation*}
for some constant \(C > 0\) depending on \(m\), \(k\), \(p\), \(\underline{\rho}\) and \(\overline{\rho}\).
\end{enumerate}
\end{proposition}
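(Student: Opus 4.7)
The plan is to build $\zeta$ by averaging a one-parameter family of candidates and selecting a favourable parameter via Fubini. Concretely, pick a smooth cutoff $\lambda : \R^{m-\ell} \to [0, 1]$ with $\lambda = 0$ on $Q^{m-\ell}_{\underline{\rho}\eta}$ and $\lambda = 1$ outside $Q^{m-\ell}_{\overline{\rho}\eta}$, satisfying $|D^i \lambda| \le C \eta^{-i}$ for every $i$, and for each $a \in Q^{m-\ell}_{\underline{\rho}\eta}$ set
\[
\zeta_a(x'') = \lambda(x'')\, x'' + (1 - \lambda(x''))\, a, \qquad \Phi_a(x', x'') = (x', \zeta_a(x'')).
\]
Since $\zeta_a \equiv a$ on $Q^{m-\ell}_{\underline{\rho}\eta}$, $\zeta_a$ is the identity outside $Q^{m-\ell}_{\overline{\rho}\eta}$, and $\zeta_a(x'')$ is a convex combination of $x''$ and $a \in Q^{m-\ell}_{\overline{\rho}\eta}$ on the transition annulus, the geometric conditions $(\ref{itemopeninglemmaGeneral1})$ and $(\ref{itemopeninglemmaGeneral2})$ hold for every such $a$; only the quantitative Sobolev estimate in $(\ref{itemopeninglemmaGeneral3})$ still requires the choice of $a$.

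The pointwise estimate $|D^i \zeta_a(x'')| \le C \eta^{1-i}$ for $1 \le i \le k$, which follows from $|x'' - a| \le C\eta$ and the bounds on $\lambda$, combined with the Fa\`a di Bruno formula yields
\[
|D^j(u \circ \Phi_a)(x)| \le C \sum_{i=1}^j \eta^{i-j}\, |D^i u(\Phi_a(x))|
\]
for every $x \in A \times Q^{m-\ell}_{\overline{\rho}\eta}$. Raising this to the $p$-th power, integrating in $x$, and averaging in $a$ over $Q^{m-\ell}_{\underline{\rho}\eta}$ reduces the whole proposition to the $L^p$ estimate
\[
\int_{Q^{m-\ell}_{\underline{\rho}\eta}} \int_{A \times Q^{m-\ell}_{\overline{\rho}\eta}} |v(\Phi_a(x))|^p \,dx\,da \le C \eta^{m - \ell} \int_{A \times Q^{m-\ell}_{\overline{\rho}\eta}} |v(y)|^p \,dy
\]
applied to $v = D^i u$ for $i = 1, \dots, j$. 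A pigeonhole argument then produces some $a^\star \in Q^{m-\ell}_{\underline{\rho}\eta}$ for which the conclusion holds, and we set $\zeta = \zeta_{a^\star}$.

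The main obstacle is this $L^p$ change-of-variables estimate, which is the content of Lemma~\ref{lemmaOpeningLp} (invoked in the paragraph after the statement). The subtlety is that for fixed $x = (x', x'')$ the affine map $a \mapsto \zeta_a(x'')$ has Jacobian $(1 - \lambda(x''))^{m-\ell}$, which degenerates on the outer part of the transition annulus. The compensating fact is that the image set $\{\zeta_a(x'') : a \in Q^{m-\ell}_{\underline{\rho}\eta}\}$ is a cube of side $2(1-\lambda(x''))\underline{\rho}\eta$ contained in $Q^{m-\ell}_{\overline{\rho}\eta}$, so that after changing variables $y = \zeta_a(x'')$ and swapping integrations via Fubini, the weight and the size of the preimage combine favourably; the claim is then obtained by a dyadic decomposition of the transition region according to the level sets of $\lambda$. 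Finally, because $u$ is defined only almost everywhere, $u \circ \Phi$ is interpreted, as explained after Proposition~\ref{openingpropGeneral}, as the $W^{k,p}$ limit of $u_n \circ \Phi$ along any smooth approximating sequence $(u_n)$; the linearity of every estimate above in $u$ ensures that this limit exists and inherits the bound.
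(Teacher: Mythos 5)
Your high-level plan (average over a one-parameter family of candidate maps, reduce the Sobolev estimate to an $L^p$ change-of-variables bound via the chain rule, then pigeonhole) is exactly right, and the pointwise bound $|D^j(u\circ\Phi_a)|\le C\sum_{i=1}^j\eta^{i-j}|D^i u(\Phi_a(\cdot))|$ is correct. But the decisive step — the averaged $L^p$ bound — is not established, and the route you indicate for it is genuinely in doubt.

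First, a reference error worth noting: you say the change-of-variable bound is ``the content of Lemma~\ref{lemmaOpeningLp}.'' That lemma takes precisely such a bound as its \emph{hypothesis} and derives a.e.\ convergence of $u_{n_i}\circ\Psi_z$ from it. The lemma that actually proves a change-of-variables estimate is Lemma~\ref{lemmaOpeningEstimate}, and it applies only to families of the very particular form $\Psi(x,z)=\zeta(x+z)-z$. The paper deliberately builds its family this way, with $\zeta(x'')=\tilde\zeta(x''+z)-z$: after the Fubini swap and the substitution $\tilde z=x''+z$, the remaining change of variable in $x''$ is a translation (Jacobian $\equiv 1$), so the estimate comes out with constant $|U+V|$ and no degeneracy anywhere.

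Your family $\zeta_a(x'')=\lambda(x'')x''+(1-\lambda(x''))a$ parametrises the constant value rather than shifting the cutoff, and the map $a\mapsto\zeta_a(x'')$ degenerates with Jacobian $(1-\lambda(x''))^{m-\ell}$. You correctly flag this and reach for a dyadic decomposition in the level sets of $\lambda$, but you do not carry it out, and it is not as harmless as the paper's argument. Concretely, for fixed $y$, the set $\{x'':y\in\zeta_a(x'')\text{ for some }a\}$ is localised near $y$ at scale $(1-\lambda(x''))\eta$, which self-referentially involves the unknown $1-\lambda(x'')$; since $\lambda$ may vary by a full unit on a ball of radius $(1-\lambda(y))\eta$ around $y$ when $|D\lambda|\sim 1/\eta$, the naive iteration does not close, and a straightforward count of dyadic shells produces a logarithmic loss $\sim\log\frac{1}{1-\lambda(y)}$ rather than $O(1)$. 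To kill this one needs additional structure on $\lambda$ (monotonicity in $|x''|_\infty$, say, and injectivity / bounded multiplicity of $x''\mapsto\lambda(x'')x''$ on the transition region, whose Jacobian $\lambda^{m-\ell-1}(\lambda+D\lambda\cdot x'')$ vanishes as $\lambda\to 0$). None of this is stated, let alone verified. So as written there is a real gap, and it is precisely the gap the paper's translation trick is designed to avoid; if you want a self-contained argument along your lines, you would need to prove the $L^p$ bound for your $\Psi_a$ from scratch with a carefully chosen $\lambda$, which is substantially more work than invoking Lemma~\ref{lemmaOpeningEstimate} with the paper's $\tilde\zeta(x''+z)-z$.
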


The proof of Proposition~\ref{openinglemmaGeneral} is based on a Fubini type argument, which gives some flexibility on the choice of \(\zeta\).{}
In particular, given finitely many measurable subsets \(A_{1}, \dots, A_{s} \subset A\), the map \(\zeta\) can be chosen such that we have in addition, for every \(r \in \{1, \dots, s\}\) and for every \(j \in \{1, \dotsc, k\}\), 
\begin{equation*}
 \eta^{j} \norm{D^j(u \circ \Phi)}_{L^p({A_{r} \times Q^{m-\ell}_{\overline{\rho}\eta}})} 
 \leq C \sum_{i=1}^j  \eta^{i} \norm{D^i u}_{L^p({A_{r} \times Q^{m-\ell}_{\overline{\rho}\eta}})}.
\end{equation*}

We will temporarily accept this proposition and the observation that follows it, and we prove the main result of the section:

\begin{proof}[Proof of Proposition~\ref{openingpropGeneral}]
We first take a finite sequence \((\rho_i)_{0 \le i \le \ell}\) such that
\[
\rho = \rho_\ell < \ldots < \rho_i < \ldots < \rho_0 < 2\rho.
\]
We construct by induction on \(i \in \{0, \dots, \ell\}\)  a map \(\Phi^i: \R^m\to \R^m\) such that
\begin{enumerate}[(a)]
\item for every \(r \in \{0, \dots, i\}\) and every \(\sigma^r \in \mathcal{U}^r\), \(\Phi^{i}\) is constant on the \(m-r\) dimensional cubes of radius \(\rho_i\eta\) which are orthogonal to \(\sigma^r\),
\label{openingrecursive1}
\item \(\Supp{\Phi^i}\subset U^i+Q^{m}_{2\rho\eta}\) and  \(\Phi^i(U^i+Q^{m}_{2\rho\eta})\subset U^i+Q^{m}_{2\rho\eta}\), 
\label{openingrecursive2}
\item \(u\circ \Phi^i \in W^{k,p}(U^\ell+Q^{m}_{2\rho\eta}; \R^{\nu})\),
\label{openingrecursive3}
\item for every \(\sigma^i \in \cU^i\) and for every \(j \in \{1, \dotsc, k\}\),
\label{openingrecursive4}
\begin{equation*}
 \eta^{j} \norm{D^j(u \circ \Phi^i)}_{L^p(\sigma^i+Q^{m}_{2\rho\eta})} \leq C \sum_{\alpha =1}^j  \eta^{\alpha} \norm{D^\alpha u}_{L^p(\sigma^i+Q^{m}_{2\rho\eta})},
\end{equation*}
for some constant \(C > 0\) depending on \(m\), \(k\), \(p\) and \(\rho\).
\end{enumerate}
The map \(\Phi^\ell\) will satisfy the conclusion of the proposition.

\medskip
If \(i = 0\), then \(\cU^0\) consists of all vertices of cubes in \(\cU^m\). To construct \(\Phi^0\), we apply Proposition~\ref{openinglemmaGeneral} to the map \(u\) around each \(\sigma^0 \in \mathcal{U}^0\) with parameters \(\rho_0 < 2\rho\) and \(\ell = 0 \): in this case, the set \(A \times Q_{\overline{\rho}\eta}^{m - \ell}\) in  Proposition~\ref{openinglemmaGeneral} is simply  \( Q_{2\rho}^m \). 
This gives a map \(\Phi^0\) such that for every \(\sigma^0 \in \mathcal{U}^0\),   \(\Phi^0\) is constant on \(\sigma^0 + Q^m_{\rho_0 \eta}\)
and \(\Phi^0=\Id\) outside \(U^0+Q^m_{2\rho \eta}\). Moreover, \(u\circ\Phi^0\in W^{k,p}(U^\ell+Q^m_{2\rho\eta}; \R^{\nu})\) and for every \(\sigma^0 \in \cU^0\) and for every \(j \in \{1, \dotsc, k\}\),
\begin{equation*}
 \eta^{j} \norm{D^j(u \circ \Phi^i)}_{L^p(\sigma^0 + Q^{m}_{2\rho\eta})} \leq C \sum_{\alpha =1}^j  \eta^{\alpha} \norm{D^\alpha u}_{L^p(\sigma^0 + Q^{m}_{2\rho\eta})},
\end{equation*}

Assume that the maps \(\Phi^0,\dotsc, \Phi^{i-1}\) have been constructed. 
To define \(\Phi^i\), we first apply Proposition~\ref{openinglemmaGeneral}, for each \(\sigma^i \in \mathcal{U}^i\), to the map \(u\circ \Phi^{i-1}\) with \(A = \sigma^i\) and parameters \(\rho_{i} < \rho_{i-1}\) . This gives a smooth map \(\Phi_{\sigma^i} : \R^m \to \R^m\) such that \(\Phi_{\sigma^i}\) is constant on the \(m-i\) dimensional cubes of radius \(\rho_i\eta\)
which are orthogonal to \(\sigma^i \).

Let \(\Phi^i : \R^m \to \R^m\) be defined for \(x \in \R^m\) by
\[
\Phi^i(x)=
\begin{cases}
\Phi^{i-1}(\Phi_{\sigma^i}(x)) &\text{if \(x\in \sigma^i + Q^{m}_{\rho_{i-1}\eta}\) where \(\sigma^{i} \in \cU^{i}\),}\\
\Phi^{i-1}(x)&\text{otherwise.}
\end{cases}
\]
We first explain why \(\Phi^i\) is well-defined. 
For this purpose, let 
\[
x \in (\sigma^i_1 + Q^{m}_{{\rho}_{i-1}\eta})\cap (\sigma^i_2 + Q^{m}_{{\rho}_{i-1}\eta})
\]
for some \(\sigma^i_1 \in \mathcal{U}^i\) and  \(\sigma^i_2 \in \mathcal{U}^i\) such that \(\sigma^i_1 \neq \sigma^i_2\). 
In particular, \(\sigma^i_1\) and \(\sigma^i_2\) are not disjoint, and there exists a smallest dimension \(r \in \{0, \dots, i-1\}\) such that
\[{}
x \in \tau^r + Q^{m}_{\rho_{i-1}\eta}
\quad \text{and} \quad
\tau^r \subset \sigma^i_1\cap \sigma^i_2
\]
for some \(\tau^r \in \mathcal{U}^{r}\).
By the formula of \(\Phi_{\sigma^i_j}\) given in Proposition~\ref{openinglemmaGeneral}, the points \(x\), \(\Phi_{\sigma^i_1}(x)\) and \(\Phi_{\sigma^i_2}(x)\) belong to the same
\(m-r\) dimensional cube of radius \(\rho_{i-1}\eta\) which is orthogonal to \(\tau^r\).
Since by induction hypothesis \(\Phi^{i-1}\) is constant on the \(m-r\) dimensional cubes of radius \(\rho_{i-1}\eta\) which are orthogonal to \(\tau^r\), we have
\[
 \Phi^{i-1}(x) = \Phi^{i-1}(\Phi_{\sigma^i_1}(x)) = \Phi^{i-1}(\Phi_{\sigma^i_2}(x)).
\]
This implies that \(\Phi^i\) is well-defined. Moreover, \(\Phi^i\) is smooth and satisfies properties \eqref{openingrecursive1}--\eqref{openingrecursive3}. 

We prove the estimates given by \eqref{openingrecursive4}. 
If \(e_1, \dots, e_m\) is an orthonormal basis of \(\R^m\) compatible with the skeleton \(\cU^\ell\), then by abuse of notation we denote by \(\sigma^i \times Q_{\alpha\eta}^{m - i}\) the parallelepiped given by
\[
\Big\{ x + \sum_{s = 1}^{m - i} t_s e_{r_s} : x \in \sigma^i \text{ and } \abs{t_s} \le \alpha\eta \Big\},
\]
where \(e_{r_1}, \dotsc, e_{r_{m - i}}\) are orthogonal to \(\sigma^{i}\).
Note that for every \(\sigma^i \in \cU^i\),
\[
\sigma^i + Q^m_{2\rho\eta} = (\sigma^i \times Q^{m-i}_{2\rho\eta}) \cup (\partial\sigma^i + Q^m_{2\rho\eta}),
\]
where \(\partial\sigma^i\) denotes the \(i-1\) dimensional skeleton of \(\sigma^i\).
By property~\((\ref{itemopeninglemmaGeneral3})\) of Proposition~\ref{openinglemmaGeneral}, 
\[{}
\int\limits_{\sigma^i \times Q^{m-i}_{\rho_{i-1}\eta}} \eta^{jp} \abs{D^j (u\circ \Phi^{i-1} \circ \Phi_{\sigma^{i}})}^p 
\le {\NewConstant} \sum_{\alpha = 1}^{j} 
\int\limits_{\sigma^i \times Q^{m - i}_{\rho_{i-1}\eta}} \eta^{\alpha p}\abs{D^\alpha (u\circ \Phi^{i-1})}^p,
\]
and then, since \(\Phi^{i} = \Phi^{i-1} \circ \Phi_{\sigma^{i}}\) on \((\sigma^i \times Q^{m-i}_{2 \rho \eta}) \setminus ( \partial\sigma^i + Q^m_{2\rho\eta} )\) and since the geometric support \(\Supp{\Phi_{\sigma^{i}}}\) is contained in \(\sigma^i \times Q^{m-i}_{\rho_{i-1}\eta}\), we have
\begin{multline}
\label{eqEstimateFace}
\int\limits_{(\sigma^i \times Q^{m-i}_{2 \rho \eta}) \setminus (\partial\sigma^i + Q^m_{2\rho\eta})} \eta^{jp} \abs{D^j (u\circ \Phi^{i})}^p \\
\le {\NewConstant} \sum_{\alpha = 1}^{j} 
\int\limits_{\sigma^i \times Q^{m - i}_{2 \rho \eta}} \eta^{\alpha p}\abs{D^\alpha (u\circ \Phi^{i-1})}^p.
\end{multline}

We claim that the maps \(\Phi_{\sigma^{i}}\) can be chosen such that the additional property holds: for every \(j \in \{1, \dotsc, k\}\),
\begin{equation}
\label{eqEstimateBoundaryFace}
\int\limits_{\partial\sigma^i + Q^{m}_{2\rho\eta}} \eta^{jp} \abs{D^j (u\circ \Phi^{i})}^p 
\le {\Constant} \sum_{\alpha = 1}^{j} \,
\int\limits_{\partial\sigma^i + Q^{m}_{2\rho\eta}} \eta^{\alpha p}\abs{D^\alpha (u\circ \Phi^{i-1})}^p.
\end{equation}
Indeed,  by the remark following Proposition~\ref{openinglemmaGeneral}, for every \(\sigma^{i} \in \mathcal{U}^{i}\) we may further require that \(\Phi_{\sigma^{i}}\) satisfies for every \(i-1\) dimensional cube \(\tau^{i - 1} \subset \partial\sigma^{i}\) and for every \(j \in \{1, \dotsc, k\}\),
\begin{multline*}
\int\limits_{[(\tau^{i-1} + Q^{m}_{2\rho\eta}) \cap \sigma^i] \times Q^{m}_{\rho_{i-1}\eta}} \eta^{jp} \abs{D^j (u\circ \Phi^{i-1} \circ \Phi_{\sigma^{i}})}^p \\
\le {\Constant} \sum_{\alpha = 1}^{j} \,
\int\limits_{[(\tau^{i-1} + Q^{m}_{2\rho\eta}) \cap \sigma^i] \times Q^{m}_{\rho_{i-1}\eta}} \eta^{\alpha p}\abs{D^\alpha (u\circ \Phi^{i-1})}^p.
\end{multline*}

Next, given \(\tau^{i - 1} \subset \partial\sigma^{i}\), denote by \(\sigma_{1}^{i}, \dots, \sigma_{\theta}^{i}\) the \(i\) dimensional cubes in \(\cU^{i}\) containing \(\tau^{i - 1}\) in their boundaries.
In this case,
\[{}
\tau^{i - 1} + Q^{m}_{2\rho\eta}
\subset \bigg(\bigcup_{\beta = 1}^{\theta} [(\tau^{i-1} + Q^{m}_{2\rho\eta}) \cap \sigma_{\beta}^i] \times Q^{m}_{\rho_{i-1}\eta} \bigg) \cup \big\{x \in \R^{m} : \Phi^{i}(x) = \Phi^{i-1}(x)\big\}.
\] 
Since for every \(\beta \in \{1, \dots, \theta\}\), \(\Phi^{i} = \Phi^{i-1} \circ \Phi_{\sigma_{\beta}^{i}}\) on \([(\tau^{i-1} + Q^{m}_{2\rho\eta}) \cap \sigma_{\beta}^i] \times Q^{m}_{\rho_{i-1}\eta}\), by the previous estimate on each cube \(\sigma_{\beta}^{i}\) and by additivity of the integral, we get
\[{}
\int\limits_{\tau^{i-1} + Q^{m}_{2\rho\eta}} \eta^{jp} \abs{D^j (u\circ \Phi^{i})}^p 
\le {\Constant} \sum_{\alpha = 1}^{j} \,
\int\limits_{\tau^{i-1} + Q^{m}_{2\rho\eta}} \eta^{\alpha p}\abs{D^\alpha (u\circ \Phi^{i-1})}^p.
\]
Adding both sides of this inequality over the \(i - 1\) dimensional cubes \(\tau^{i-1} \subset \partial\sigma^{i}\), we deduce estimate \eqref{eqEstimateBoundaryFace} as we claimed.

By additivity of the integral and by estimates \eqref{eqEstimateFace} and \eqref{eqEstimateBoundaryFace}, we then obtain
\[
\int\limits_{\sigma^i + Q^{m}_{2\rho\eta}} \eta^{jp} \abs{D^j (u\circ \Phi^{i})}^p 
\le {\Constant} \sum_{\alpha = 1}^{j} 
\int\limits_{\sigma^i + Q^{m}_{2\rho\eta}} \eta^{\alpha p}\abs{D^\alpha (u\circ \Phi^{i-1})}^p.
\]
Since by induction hypothesis \(\Phi^{i-1}\) coincides with the identity map outside \(U^{i-1}+Q^m_{2\rho\eta}\), for every \(\alpha \in \{1, \dots, j\}\) we have
\begin{multline*}
\int\limits_{\sigma^i + Q^m_{2\rho\eta}} \eta^{\alpha p}\abs{D^\alpha (u\circ \Phi^{i-1})}^p \\
= \int\limits_{\partial\sigma^i + Q^m_{2\rho\eta}} \eta^{\alpha p}\abs{D^\alpha (u\circ \Phi^{i-1})}^p 
+\int\limits_{(\sigma^i + Q^m_{2\rho\eta}) \setminus (\partial\sigma^i + Q^m_{2\rho\eta})} \eta^{\alpha p}\abs{D^\alpha u}^p.
\end{multline*}
By induction hypothesis, for every \(i-1\) dimensional face \(\tau^{i-1}\) of \(\partial\sigma^i\),
\[
\int\limits_{\tau^{i-1} + Q^{m}_{2\rho\eta}} \eta^{\alpha p} \abs{D^\alpha (u\circ \Phi^{i-1})}^p 
\le {\Constant} \sum_{\beta = 1}^{\alpha} 
\, \int\limits_{\tau^{i-1} + Q^{m}_{2\rho\eta}} \eta^{\beta p}\abs{D^\beta u}^p.
\]
Since the number of overlaps of the sets \(\tau^{i-1} + Q^{m}_{2\rho\eta}\) is bounded from above by a constant only depending on \(m\), we have by additivity of the integral,
\[
\int\limits_{\partial\sigma^i + Q^m_{2\rho\eta}} \eta^{\alpha p}\abs{D^\alpha (u\circ \Phi^{i-1})}^p 
\le {\Constant} \sum_{\beta = 1}^{\alpha} 
\, \int\limits_{\partial\sigma^i + Q^{m}_{2\rho\eta}} \eta^{\beta p}\abs{D^\beta u}^p.
\]
Therefore, 
\[
\int\limits_{\sigma^i + Q^m_{2\rho\eta}} \eta^{j p}\abs{D^j (u \circ \Phi^i)}^p 
\le {\Constant} \sum_{\alpha = 1}^{j} 
\int\limits_{\sigma^i + Q^{m}_{2\rho\eta}} \eta^{\alpha p}\abs{D^\alpha u}^p.
\]

The map \(\Phi^\ell\) satisfies properties \((\ref{itemgenopeningprop1})\)--\((\ref{itemgenopeningprop6})\). The estimate of property~\((\ref{itemgenopeningprop5})\) is a consequence of \((\ref{itemgenopeningprop6})\) and the additivity of the integral.
\end{proof}

We proceed to prove Proposition~\ref{openinglemmaGeneral} by making precise the meaning of \(u \circ \Phi\) in the statement.

Given a continuous function \(\Psi : U \times V \to W\) and \(z \in V\), we denote by \(\Psi_z : U \to W\) the map defined for every \(x \in U\) by 
\[
\Psi_z(x) = \Psi(x, z).
\]
For every measurable function \(g : W \to \R\), the composition \(g \circ \Psi_z\) is well-defined and gives a measurable function defined on \(W\) for every \(z\).

\begin{lemma}
\label{lemmaOpeningLp}
Let \(U, W \subset \R^m\) and \(V \subset \R^l\) be measurable sets and let \(\Psi : U \times V \to W\) be a continuous map such that for every measurable function \(g : W \to \R\),
\[
\int\limits_V \norm{g \circ \Psi_z}_{L^1(U)} \dif z\le C \norm{g}_{L^1(W)}.
\]
If \(u \in L^p(W; \R^\nu)\) and if \((u_n)_{n \in \N}\) is a sequence of measurable functions converging to \(u\) in \(L^p(W; \R^\nu)\), then there exists a subsequence \((u_{n_i})_{i \in \N}\) such that for almost every \(z \in V\),
\begin{enumerate}[$(i)$]
\item the sequence \((u_{n_i} \circ \Psi_z)_{i \in \N}\) converges in \(L^p(U; \R^\nu)\) to a function which we denote by \(u \circ \Psi_{z}\),
\label{OpeningLp1}
\item the essential range of \(u \circ \Psi_{z}\) is contained in the essential range of \(u\).
\label{OpeningLp2}
\end{enumerate}
\end{lemma}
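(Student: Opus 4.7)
The plan is to bootstrap the hypothesis from $L^1$ to $L^p$ and then extract a single subsequence that realizes almost everywhere convergence in two senses simultaneously. Applying the hypothesis to $g = \abs{h}^p$ for $h \in L^p(W; \R^\nu)$, which lies in $L^1(W)$, and using $\norm{g \circ \Psi_z}_{L^1(U)} = \norm{h \circ \Psi_z}_{L^p(U)}^p$, yields
\[
\int_V \norm{h \circ \Psi_z}_{L^p(U)}^p \dif z \le C \norm{h}_{L^p(W)}^p.
\]
This $L^p$ stability is the only input I need from the hypothesis; both conclusions of the lemma will follow by manipulating it in conjunction with Borel--Cantelli-type arguments.

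For \((\ref{OpeningLp1})\), I would first extract a subsequence $(u_{n_k})$ satisfying $\norm{u_{n_{k+1}} - u_{n_k}}_{L^p(W)} \le 2^{-k}$ and set $f_k(z) := \norm{u_{n_{k+1}} \circ \Psi_z - u_{n_k} \circ \Psi_z}_{L^p(U)}$. The estimate above applied to $h = u_{n_{k+1}} - u_{n_k}$ gives $\norm{f_k}_{L^p(V)} \le C^{1/p} 2^{-k}$, so $\sum_k f_k$ converges in $L^p(V)$; in particular $\sum_k f_k(z) < \infty$ for almost every $z \in V$. For such $z$, the sequence $(u_{n_k} \circ \Psi_z)_k$ is Cauchy in $L^p(U; \R^\nu)$ and converges to a limit which I define to be $u \circ \Psi_z$.

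For \((\ref{OpeningLp2})\), I pass to a further subsequence so that $u_{n_k} \to u$ pointwise a.e.\ on $W$. Let $E$ denote the essential range of $u$ and
\[
N := \{w \in W : u_{n_k}(w) \not\to u(w)\} \cup \{w \in W : u(w) \notin E\},
\]
which is Lebesgue null. Applying the hypothesis to $g = \mathbf{1}_N$ gives
\[
\int_V \abs{\{x \in U : \Psi_z(x) \in N\}} \dif z \le C \abs{N} = 0,
\]
so for almost every $z$ the set $\{x \in U : \Psi_z(x) \in N\}$ is null in $U$. Fixing such $z$ also inside the full-measure set from the previous paragraph and extracting from $(u_{n_k} \circ \Psi_z)$ yet another subsequence converging pointwise a.e.\ on $U$ to $u \circ \Psi_z$, we have for a.e.\ $x \in U$ both $\Psi_z(x) \notin N$, so $u_{n_k}(\Psi_z(x)) \to u(\Psi_z(x)) \in E$, and the pointwise $L^p$ limit $(u \circ \Psi_z)(x)$. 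The two limits agree, so $(u \circ \Psi_z)(x) = u(\Psi_z(x)) \in E$ for a.e.\ $x$, and the essential range of $u \circ \Psi_z$ is contained in $E$.

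The main technical point is the bookkeeping of null sets across the three spaces $U$, $V$, $W$: a single subsequence must deliver both the $L^p$-Cauchy property for almost every $z \in V$ and pointwise convergence outside a $W$-null set, which the hypothesis then transports, for almost every $z$, into a $U$-null set, so that the pointwise and $L^p$ limits can be identified to extract the information about the essential range.
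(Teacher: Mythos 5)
Your part \((\ref{OpeningLp1})\) is essentially the paper's argument: both extract a fast $L^p(W)$-Cauchy subsequence, transport the increment bound through the hypothesis applied to $\abs{\,\cdot\,}^p$, and conclude almost-everywhere Cauchyness of $(u_{n_i}\circ\Psi_z)_i$ in $L^p(U)$; you package the Borel--Cantelli step as summability of $\sum_k f_k$ in $L^p(V)$, which is a tidy shortcut over the paper's explicit Chebyshev sets $Y_i$. Your part \((\ref{OpeningLp2})\), however, is a genuinely different argument. The paper introduces a bounded continuous function $\theta$ with $\theta^{-1}(0)$ equal to the essential range of $u$, reduces to the case $\abs{W}<\infty$, and passes to the limit in $\int_V\norm{\theta\circ(u_{n_i}\circ\Psi_z)}_{L^1(U)}\dif z\le C\norm{\theta\circ u_{n_i}}_{L^1(W)}$ via Fatou's lemma. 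You instead apply the $L^1$ hypothesis directly to the indicator of the $W$-null set $N$ of bad points, so $\Psi_z^{-1}(N)$ is $U$-null for a.e.\ $z$, and then identify the $L^p$ limit $u\circ\Psi_z$ with the pointwise composition $u(\Psi_z(\cdot))$ along a $z$-dependent subsequence; closedness of the essential range finishes it. Your route is more elementary: it avoids constructing $\theta$, avoids the Fatou step (where the paper silently needs uniform continuity of $\theta$, or convergence in measure, to justify the pointwise lower semicontinuity in $z$), and does not need the reduction to $\abs{W}<\infty$. The paper's route is slicker once the reduction is accepted, and has the cosmetic advantage of never invoking pointwise subsequences in $U$, but yours is fully correct and, if anything, slightly more robust.
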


\begin{proof}
Let \((u_n)_{n \in \N}\) be a sequence of measurable functions in \(W\) converging to \(u\) in \(L^p(W; \R^\nu)\). Given a sequence  \((\varepsilon_n)_{n \in \N}\) of positive numbers, let \((u_{n_i})_{i \in \N}\) be a subsequence such that for every \(i \in \N\),
\[
\norm{u_{n_{i + 1}} - u_{n_i}}_{L^p(W)} \le \varepsilon_i.
\]
By the assumption on \(\Psi\),
\[
\int\limits_V \norm{u_{n_{i + 1}} \circ \Psi_{z} - u_{n_i} \circ \Psi_{z}}_{L^p(U)}^p \dif z 
 \le  C \norm{u_{n_{i + 1}} - u_{n_i}}_{L^p(W)}^p \le  C \varepsilon_i^p.
\]

Given a sequence  \((\alpha_n)_{n \in \N}\) of positive numbers, let
\[
Y_i = \Big\{  z \in V : \norm{u_{n_{i + 1}} \circ \Psi_{z} - u_{n_i} \circ \Psi_{z}}_{L^p(U)} > \alpha_i  \Big\}. 
\]
If the series \(\sum\limits_{i = 0}^\infty \alpha_i\) converges, then for every \(t \in \N\) and for every \(z \not\in \bigcup\limits_{i = t}^\infty Y_i\), the sequence \((u_{n_i} \circ \Psi_{z})_{i \in \N}\) is a Cauchy sequence in \(L^p(U; \R^\nu)\).

By the Chebyshev inequality,
\[
\alpha_i^p |Y_i| \le \int\limits_{Y_i} \norm{u_{n_{i + 1}} \circ \Psi_{z} - u_{n_i} \circ \Psi_{z}}_{L^p(U)}^p  \dif z \le  C \varepsilon_i^p.
\]
Hence, for every \(t \in \N\),
\[
\Big| \textstyle \bigcup\limits_{i = t}^\infty Y_i\Big| \le C \displaystyle\sum\limits_{i = t}^\infty \Big(\frac{\varepsilon_i}{\alpha_i}\Big)^p.
\]

Taking the sequences \((\varepsilon_n)_{n \in \N}\) and \((\alpha_n)_{n \in \N}\) such that both  series \(\sum\limits_{i = 0}^\infty \alpha_i\) and \(\sum\limits_{i = 0}^\infty (\varepsilon_i/\alpha_i)^p\) converge, then the set \(E = \bigcap\limits_{t = 0}^\infty \bigcup\limits_{i = t}^\infty Y_i\) is negligible and for every \(z \in V \setminus E\), \((u_{n_i} \circ \Psi_{z})_{i \in \N}\) is a Cauchy sequence in \(L^p(U; \R^\nu)\). This proves assertion \((\ref{OpeningLp1})\).

\medskip
It suffices to prove assertion \((\ref{OpeningLp2})\) when \(W\) has finite Lebesgue measure.
For every \(z \in V \setminus E\), we denote by \(u \circ \Psi_z\) the limit in \(L^p(U; \R^\nu)\) of the sequence \((u_{n_i} \circ \Psi_{z})_{i \in \N}\).

Let \(\theta : \R^\nu\to \R\) be a continuous function such that \( \theta^{-1}(0)\) is equal to the essential range of \(u\)  and \(0 \le \theta \le 1\) in \(\R^\nu\).
For every \(i \in \N\),
\[
\int\limits_V \norm{ \theta \circ  (u_{n_i} \circ \Psi_z)}_{L^1(U)} \dif z \leq C  \norm{ \theta \circ  u_{n_i} }_{L^1(W)}.
\]
By Fatou's lemma,
\[
\int\limits_V \norm{\theta \circ  (u \circ \Psi_z)}_{L^1(U)} \dif z  \le \liminf_{i \to \infty} \int\limits_V \norm{ \theta \circ  (u_{n_i} \circ \Psi_z)}_{L^1(U)} \dif z.
\]
Since \(W\) has finite Lebesgue measure and \(\theta\) is bounded, as \(i\) tends to infinity we get
\[
\int\limits_V \norm{\theta \circ  (u \circ \Psi_z)}_{L^1(U)} \dif z \le C \norm{ \theta \circ u}_{L^1(W)} = 0.
\]
Therefore, for almost every \(z \in V\), \(\norm{\theta \circ  (u \circ \Psi_z)}_{L^1(U)} = 0\), whence  the essential range of \(u \circ \Psi_z\) is contained in the essential range of \(u\). 
\end{proof}

From the previous lemma, we can prove the following property for maps in \(W^{k, p}\):

\begin{lemma}
\label{lemmaOpeningSobolev}
Let \(U, W \subset \R^m\) and \(V \subset \R^l\) be open sets and let \(\Psi : U \times V \to W\) be a smooth map such that for every measurable function \(g : W \to \R\),
\[
\int\limits_V \norm{g \circ \Psi_z}_{L^1(U)} \dif z\le C \norm{g}_{L^1(W)}.
\]
If \(u \in W^{k, p}(W; \R^\nu)\) and if \((u_n)_{n \in \N}\) is a sequence of smooth functions converging to \(u\) in \(W^{k, p}(W; \R^\nu)\), then there exists a subsequence \((u_{n_i})_{i \in \N}\) such that for almost every \(z \in V\) the sequence \((u_{n_i} \circ \Psi_z)_{i \in \N}\) converges to \(u \circ \Psi_{z}\) in \(W^{k, p}(U; \R^\nu)\), and for every \(j \in \{1, \dots, k\}\),
\[
\int\limits_V \norm{D^j (u \circ \Psi_z)}_{L^p(U)} \dif z \leq C' \abs{V}^{1 - \frac{1}{p}} \sum_{i=1}^j \norm{D^i u}_{L^p(W)},
\]
for some constant \(C' > 0\) depending on \(m\), \(p\), \(k\), \(C\) and \(\max\limits_{1 \le j \le k}\sup\limits_{z \in V} {\norm{D^j \Psi_z}_{L^\infty(U)}}\).
\end{lemma}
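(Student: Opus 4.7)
The plan is to first establish the integral estimate for smooth functions via a pointwise chain-rule bound, then upgrade the $L^p$-convergence already provided by Lemma~\ref{lemmaOpeningLp} to $W^{k,p}$-convergence along a fast-converging subsequence, and finally pass to the limit in the estimate using Fatou's lemma. Throughout, I write $M := \max_{1 \le j \le k}\sup_{z \in V}\norm{D^j\Psi_z}_{L^\infty(U)}$, which is finite by assumption.

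\textbf{Estimate for smooth \(f\).} For smooth \(f : W \to \R^\nu\), Faà di Bruno's formula expresses \(D^j(f \circ \Psi_z)(x)\) as a finite sum of terms of the form \((D^i f)(\Psi_z(x)) \cdot R(x,z)\), where \(1 \le i \le j\) and \(R\) is a universal polynomial in the partial derivatives \(D^{\alpha}\Psi_z(x)\) with \(1 \le \abs{\alpha} \le j\). Using the bound on \(M\), this yields the pointwise inequality
\[
\abs{D^j(f \circ \Psi_z)(x)} \le C_1 \sum_{i=1}^{j} \abs{(D^i f)(\Psi_z(x))},
\]
with \(C_1\) depending only on \(k\) and \(M\). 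Applying the hypothesis on \(\Psi\) to \(g = \abs{D^i f}^p\) gives
\[
\int_V \norm{(D^i f) \circ \Psi_z}_{L^p(U)}^p \dif z \le C\,\norm{D^i f}_{L^p(W)}^p,
\]
and Hölder's inequality in the \(z\) variable with exponents \(p\) and \(p/(p-1)\) yields
\[
\int_V \norm{(D^i f) \circ \Psi_z}_{L^p(U)} \dif z \le C^{1/p}\,\abs{V}^{1-1/p}\,\norm{D^i f}_{L^p(W)}.
\]
Combined with the pointwise bound, this proves the desired estimate for every smooth \(f\).

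\textbf{Extraction and passage to the limit.} Choose a subsequence, still denoted \((u_n)_{n \in \N}\), such that \(\norm{u_{n+1} - u_n}_{W^{k,p}(W)} \le \varepsilon_n\) for a sequence \((\varepsilon_n)_{n \in \N}\) that will be taken summable. Applying the estimate of the previous step to \(f = u_{n+1} - u_n\) and running the Chebyshev/Borel--Cantelli argument exactly as in the proof of Lemma~\ref{lemmaOpeningLp}—with a suitable auxiliary sequence \((\alpha_n)_{n \in \N}\) such that both \(\sum_n \alpha_n\) and \(\sum_n (\varepsilon_n/\alpha_n)^p\) converge—I would produce a further subsequence \((u_{n_i})_{i \in \N}\) and a negligible set \(E \subset V\) such that, for every \(z \in V \setminus E\), the sequence \((u_{n_i} \circ \Psi_z)_i\) is Cauchy in \(W^{k,p}(U; \R^\nu)\) and hence converges to some \(v_z\). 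Applying Lemma~\ref{lemmaOpeningLp} to the same subsequence (possibly after one more extraction) identifies \(v_z = u \circ \Psi_z\) almost everywhere on \(U\). The final estimate for \(u\) then follows from the smooth estimate applied to each \(u_{n_i}\), by Fatou's lemma in \(z\) on the left-hand side together with \(\norm{D^i u_{n_i}}_{L^p(W)} \to \norm{D^i u}_{L^p(W)}\) on the right.

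\textbf{Main obstacle.} The calculus of the chain rule in Step one is the only place where care is needed: one must check that only derivatives of \(\Psi_z\) of order at most \(k\) appear, so that the constant in the final inequality depends only on the quantities stated in the proposition. A secondary, but entirely routine, point is coordinating the two subsequence extractions—one provided by Lemma~\ref{lemmaOpeningLp} for the \(L^p\) limit, the other by the Cauchy argument above for the \(W^{k,p}\) limit—so that they produce a single subsequence for which both limits coincide.
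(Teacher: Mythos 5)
Your proposal is correct, and the outline of the argument (chain rule bound for smooth maps, integrate in \(z\) against the \(\Psi\)-hypothesis with \(g=\abs{D^i f}^p\), Hölder in \(z\), pass to the limit by Fatou) coincides with the paper's. The one place where you genuinely diverge is the upgrade from \(L^p\) to \(W^{k,p}\) convergence for a.e.\ \(z\). You re-run the Chebyshev/Borel--Cantelli machinery of Lemma~\ref{lemmaOpeningLp} from scratch, at the level of \(W^{k,p}\) norms, starting from a subsequence that converges fast in \(W^{k,p}(W)\). The paper instead reuses Lemma~\ref{lemmaOpeningLp} as a black box: it applies it simultaneously to \(u\) and to each \(D^j u\), \(j\in\{1,\dots,k\}\), and extracts a common subsequence so that \((D^j u_{n_i})\circ\Psi_z \to (D^j u)\circ\Psi_z\) in \(L^p(U)\) for a.e.\ \(z\); the chain-rule pointwise bound applied to \(v = u_{n_i}-u_{n_{i'}}\) then instantly gives that \((u_{n_i}\circ\Psi_z)_i\) is Cauchy in \(W^{k,p}(U)\), with no new measure-theoretic bookkeeping. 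Both routes are valid and give the same constant. Yours produces a single subsequence in one sweep (and thereby also disposes of the ``coordination'' worry you raise at the end—the fast \(W^{k,p}\) subsequence automatically serves the role of the subsequence in Lemma~\ref{lemmaOpeningLp}, since \(W^{k,p}\)-summability of increments implies \(L^p\)-summability), at the cost of repeating a Borel--Cantelli argument that the paper already packaged once. The paper's version is shorter precisely because it treats Lemma~\ref{lemmaOpeningLp} as already proved and applies it to the derivative family.
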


\begin{proof}
Let \((u_n)_{n \in \N}\) be a sequence of smooth functions in \(W^{k, p}(W; \R^\nu)\) converging  to \(u\) in \(W^{k, p}(W; \R^\nu)\). 
By the previous lemma, there exists a subsequence \((u_{n_i})_{i \in \N}\)
 such that for almost every \(z \in V\),
\((u_{n_i} \circ \Psi_z)_{i \in \N}\) converges to \(u \circ \Psi_z\) in \(L^p\)
and for every \(j \in \{1, \dots, k\}\),
\(((D^j u_{n_i}) \circ \Psi_z)_{i \in \N}\) converges to \((D^j u) \circ \Psi_z\) in \(L^p\). 

For every \(v\in C^{\infty}(W; \R^\nu)\),  for every \(z \in V\) and for each \(j \in \{1, \dots, k\}\),
\[
\begin{split}
\abs{D^j (v\circ \Psi_{z}) (x)} 
& \le C_1 \sum_{i = 1}^j \sum_{\substack{1 \le t_1 \le \ldots \le t_i\\ t_1 + \dots + t_i = j}}{\abs{D^i v(\Psi_z(x))}  \abs{D^{t_1}\Psi_z(x)} \dotsm \abs{D^{t_i} \Psi_z(x)} }  \\
& \le C_2 \sum_{i = 1}^j \abs{D^i v (\Psi_z(x))},
\end{split}
\]
whence
\[
\norm{D^j (v\circ \Psi_{z})}_{L^p(U)}^p \leq C_3 \sum_{i=1}^j \norm{ \abs{D^i v}^p \circ \Psi_z}_{L^1(U)}.
\]
This implies that  for almost every \(z \in V\), \((u_{n_i} \circ \Psi_z)_{i \in \N}\) is a Cauchy sequence in \(W^{k, p}(U; \R^\nu)\), thus \((u_{n_i} \circ \Psi_z)_{i \in \N}\) converges to \(u \circ \Psi_z\) in \(W^{k, p}(U; \R^\nu)\).
Moreover, integrating with respect to \(z\) the above estimate and using the assumption on \(\Psi\) we get
\[
\begin{split}
\int\limits_V \norm{D^j (v\circ \Psi_{z})}_{L^p(U)}^p \dif z 
& \leq C_3 \sum_{i=1}^j \int\limits_V \norm{\abs{D^i v}^p \circ \Psi_z}_{L^1(U)} \dif z\\
& \le C_4 \sum_{i=1}^j \norm{\abs{D^i v}^p}_{L^1(W)} = C_4 \sum_{i=1}^j \norm{D^i v}_{L^p(W)}^p.
\end{split}
\]
Thus, by Hölder's inequality,
\[
\begin{split}
\int\limits_V \norm{D^j (v\circ \Psi_{z})}_{L^p(U)} \dif z  
& \le \abs{V}^{1 - \frac{1}{p}} \biggl( C_4 \sum_{i=1}^j \norm{D^i v}_{L^p(W)}^p \biggr)^{\frac{1}{p}}\\
& \le C_5 \abs{V}^{1 - \frac{1}{p}} \sum_{i=1}^j \norm{D^i v}_{L^p(W)}.
\end{split}
\]
We obtain the desired estimate by taking \(v=u_{n_i}\) and letting  \(n_i\) tend to infinity.
\end{proof}

We now show that the functional estimate in Lemma~\ref{lemmaOpeningLp} and~\ref{lemmaOpeningSobolev} is satisfied for maps \(\Psi\) of the form
\[
\Psi(x, z) = \zeta(x + z) - z.
\]
The strategy is based on an averaging device due to Federer and Fleming~\cite{Federer-Fleming} and adapted by Hardt, Kinderlehrer and Lin~\cite{Hardt-Kinderlehrer-Lin} in the context of Sobolev maps. 
It relies on the following lemma:

\begin{lemma}
\label{lemmaOpeningEstimate}
Let \(U, V, W \subset \R^l\) be measurable sets and let \(\zeta : U + V \to \R^l\) be a continuous map such that for every \(x \in U\) and for every \(z \in V\), \(\zeta (x + z) - z \in W\). Then, for every measurable function \(g : W \to \R\),
\begin{equation*}
\int\limits_{V} \biggl(\int\limits_{U} \abs{g(\zeta(x + z) - z)} \dif x \biggr) \dif z  \le |U + V| 
\int\limits_{W} \abs{g(x)} \dif x.
\end{equation*}
\end{lemma}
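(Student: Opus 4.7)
The plan is to evaluate the double integral by two successive affine changes of variables, interchanged via Tonelli's theorem. Since the integrand $\abs{g}$ is non-negative and measurable (continuity of $\zeta$ ensures measurability of the composition), all intermediate integrals are well-defined in $[0,+\infty]$ and no integrability issue arises.

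First, I rewrite the inner integral using the substitution $w = x + z$ (for fixed $z \in V$). As $x$ ranges over $U$, the new variable $w$ ranges over $U + z \subset U + V$, so
\[
\int_U \abs{g(\zeta(x+z) - z)} \dif x = \int_{U+V} \mathbf{1}_{U}(w - z) \, \abs{g(\zeta(w) - z)} \dif w.
\]
Tonelli's theorem then allows me to swap the order of integration and bring $\dif w$ outside:
\[
\int_V \int_U \abs{g(\zeta(x+z)-z)} \dif x \, \dif z = \int_{U+V} \int_V \mathbf{1}_U(w-z) \, \abs{g(\zeta(w)-z)} \dif z \, \dif w.
\]

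For each fixed $w \in U+V$, I next perform the substitution $y = \zeta(w) - z$ in the inner integral. This is an affine bijection of $\R^l$ (a translation composed with a reflection), hence measure-preserving, and it transforms the inner integral into
\[
\int_{\R^l} \mathbf{1}_V(\zeta(w) - y) \, \mathbf{1}_U(w - \zeta(w) + y) \, \abs{g(y)} \dif y.
\]
The decisive observation is that the product of the two indicators is pointwise bounded by $\mathbf{1}_W(y)$: if it equals $1$, then setting $z = \zeta(w) - y$ and $x = w - z$ one has $x \in U$ and $z \in V$, whence the hypothesis on $\zeta$ yields $y = \zeta(x+z) - z \in W$. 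The inner integral is therefore at most $\int_W \abs{g}$, and integrating over $w \in U + V$ gives the required upper bound $|U+V| \int_W \abs{g(x)} \dif x$.

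There is no substantive obstacle beyond careful bookkeeping of the indicators; once the two substitutions are set up correctly, the hypothesis $\zeta(x+z) - z \in W$ is precisely what is needed to remove the geometric constraint on $y$ and recover a clean $L^1$ bound. The fact that the substitution $y = \zeta(w) - z$ has unit Jacobian is also what makes the argument work without any differentiability or injectivity assumption on $\zeta$---an important point since in the intended applications $\zeta$ is constant on large sets and hence highly non-injective.
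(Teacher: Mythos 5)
Your proof is correct and is essentially the same as the paper's: both pass from the sum variable $w = x+z$ to a second affine substitution $y = \zeta(w) - z$, with Fubini/Tonelli interchanges in between, and both invoke the hypothesis $\zeta(x+z)-z \in W$ at exactly the same point (in the paper it appears as the inclusion of the domain $\zeta(\tilde z) - (V \cap (\tilde z - U)) \subset W$, in yours as the indicator bound $\mathbf{1}_V(\zeta(w)-y)\,\mathbf{1}_U(w-\zeta(w)+y) \le \mathbf{1}_W(y)$, which are identical statements). The only cosmetic difference is the order of the steps: you substitute once, apply Tonelli once, and substitute again, whereas the paper applies Fubini both before and after the first substitution; the endpoints coincide.
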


\begin{proof}
Let \(\xi : (U + V) \times V \to \R^l\) be the function defined by
\[
\xi(x, z) = \zeta(x + z) - z.
\]
By Fubini's theorem,
\[
\int\limits_{V} \biggl(\int\limits_{U} |(g \circ \xi)(x, z)|  \dif x \biggr) \dif z = \int\limits_{U} \biggl( \int\limits_{V} |g(\zeta(x + z) - z)|  \dif z \biggr) \dif x.
\]
Applying the change of variables \(\tilde z = x + z\) in the variable \(z\) and Fubini's theorem,
\begin{equation*}
\begin{split}
\int\limits_{V} \biggl(\int\limits_{U} |(g \circ \xi)(x, z)|  \dif x \biggr) \dif z
& = \int\limits_{U} \biggl( \int\limits_{x + V} |g(\zeta(\tilde z) + x - \tilde z)|  \dif \tilde z \biggr) \dif x\\
&=
\int\limits_{U + V} \biggl( \int\limits_{(\tilde z - V) \cap U} |g(\zeta(\tilde z) + x - \tilde z)|  \dif x \biggr) \dif \tilde z.
\end{split}
\end{equation*}
We now apply the change of variables \(\tilde x = \zeta(\tilde z) + x - \tilde z\) in the variable \(x\), and use the assumption on \(W\) to conclude
\begin{equation*}
\begin{split}
\int\limits_{V} \biggl(\int\limits_{U} |(g \circ \xi)(x, z)|  \dif x \biggr) \dif z 
&=
\int\limits_{U + V} \biggl( \int\limits_{\zeta(\tilde z) -(V \cap (\Tilde z-U))} |g(\tilde x)|  \dif \tilde x \biggr) \dif \tilde z \\
&\le
\int\limits_{U + V} \biggl( \int\limits_{W} |g(\tilde x)|  \dif \tilde x \biggr) \dif \tilde z\\
&= |U + V| \int\limits_{W} |g(\tilde x)|  \dif \tilde x.
\end{split}
\end{equation*}
This gives the desired estimate.
\end{proof}

\begin{proof}[Proof of Proposition~\ref{openinglemmaGeneral}]
By scaling, it suffices to establish the result when \(\eta = 1\).
We fix \(\Hat{\rho}\) such that \(2\Hat{\rho} < \overline{\rho} - \underline{\rho}\).

Let \(\tilde\zeta : \R^{m-\ell} \to \R^{m-\ell}\) be the smooth map defined by
\[
\tilde\zeta(y) = (1 - \varphi(y))y,
\]
where \(\varphi :  \R^{m-\ell} \to [0, 1]\) is a smooth function such that
\begin{itemize}[\(-\)]
\item for  \(y \in  Q_{\underline{\rho}+\Hat{\rho}}^{m-\ell}\), \(\varphi(y) = 1\),
\item for \(y \in  \R^{m - \ell} \setminus Q_{\overline{\rho}-\Hat{\rho}}^{m-\ell}\), \(\varphi(y) = 0\). 
\end{itemize}
For any \(z \in Q^{m-\ell}_{\hat\rho}\), the function \(\zeta :  \R^{m-\ell} \to \R^{m-\ell}\) defined for \(x'' \in  \R^{m-\ell}\) by
\[
\zeta(x'') = 
\Tilde\zeta(x'' + z) - z
\]
satisfies properties \((\ref{itemopeninglemmaGeneral1})\)--\((\ref{itemopeninglemmaGeneral2})\).

We claim that for some \(z \in Q^{m-\ell}_{\hat\rho}\), the function \(\Phi :  \R^m \to \R^m\) defined for \(x = (x', x'') \in  \R^\ell \times \R^{m - \ell}\) by
\[
\Phi(x) = (x', \zeta(x''))
\]
satisfies property~\((\ref{itemopeninglemmaGeneral3})\).

For this purpose, let
\(\Psi : \R^m \times Q_{\Hat{\rho}}^{m-\ell} \to \R^m\) be the function defined for \(x = (x', x'') \in  \R^\ell \times \R^{m-\ell}\) and \(z \in  Q_{\Hat{\rho}}^{m-\ell}\) by
\[
\Psi(x, z) =  (x', \Tilde \zeta(x''+z)-z) .
\]
For every measurable function \(f : A \times Q_{\overline{\rho}}^{m-\ell} \to \R\), we have by Fubini's theorem,
\begin{multline*}
\int\limits_{Q_{\Hat{\rho}}^{m-\ell}} \norm{ f \circ \Psi_z}_{L^1(A \times Q_{\overline{\rho}}^{m-\ell})} \dif z\\
 = \int\limits_A \biggl[\int\limits_{Q_{\Hat{\rho}}^{m-\ell}} \biggl(\int\limits_{Q_{\overline{\rho}}^{m-\ell}} \bigabs{f(x', \Tilde\zeta(x''+z)-z) } \dif x'' \biggr) \dif z \biggr] \dif x'.
\end{multline*}
Given \(x' \in A\), we apply Lemma~\ref{lemmaOpeningEstimate} with \(U = Q_{\overline{\rho}}^{m-\ell}\), \(V = Q_{\Hat{\rho}}^{m-\ell}\), \(W = Q_{\overline{\rho}}^{m-\ell}\), and
\(
\Tilde\zeta.
\)
We deduce that
\[
\int\limits_{Q_{\Hat{\rho}}^{m-\ell}} \biggl(\int\limits_{Q_{\overline{\rho}}^{m-\ell}} \bigabs{f(x', \Tilde\zeta(x''+z)-z) } \dif x'' \biggr) \dif z 
\le C_1 \int\limits_{Q_{\overline{\rho}}^{m-\ell}} |f(x', x'')| \dif x''.
\]
Thus,
\[
\int\limits_{Q_{\Hat{\rho}}^{m-\ell}} \norm{ f \circ \Psi_z}_{L^1(A \times Q_{\overline{\rho}}^{m-\ell})} \dif z \le C_1 \norm{f}_{L^1( A \times Q_{\overline{\rho}}^{m-\ell})}.
\]

\medskip
By Lemma~\ref{lemmaOpeningSobolev}, for almost every \(z \in Q_{\Hat{\rho}}^{m-\ell}\), \(u \circ \Psi_{z} \in W^{k, p}(A \times Q_{\overline{\rho}}^{m-\ell}; \R^\nu) \) and for every \(j \in \{1, \dots, k\}\),
\[
\int\limits_{Q_{\Hat{\rho}}^{m-\ell}} \norm{D^j (u \circ \Psi_{z})}_{L^p({A \times Q_{\overline{\rho}}^{m-\ell}})} \dif z \leq C_2 \sum_{i=1}^j \norm{D^i u}_{L^p({A \times Q_{\overline{\rho}}^{m-\ell}})}.
\]
We may thus find some \(z \in Q_{\Hat{\rho}}^{m-\ell}\) such that \(u \circ \Psi_{z} \in W^{k, p}(A \times Q_{\overline{\rho}}^{m-\ell}; \R^\nu)\) and for every \(j \in \{1, \dots, k\}\),
\[
\norm{D^j (u \circ \Psi_{z})}_{L^p({A \times Q_{\overline{\rho}}^{m-\ell}})} \leq C_3 \sum_{i=1}^j \norm{D^i u}_{L^p({A \times Q_{\overline{\rho}}^{m-\ell}})}.
\]
The function \(\zeta\) defined in terms of this point \(z\) satisfies the required properties.
\end{proof}

\begin{addendum}[{Proposition~\ref{openingpropGeneral}}]
\label{addendumW1kp}
Let \(\cK^m\) be a cubication containing \(\cU^m\) and let \(q \ge 1\). If \(u \in W^{1, q}(K^m + Q^m_{2\rho\eta}; \R^\nu)\), then the map \(\Phi : \R^m \to \R^m\) can be chosen with the additional property that \(u \circ \Phi \in W^{1, q}(K^m + Q^m_{2\rho\eta}; \R^\nu)\) and for every \(\sigma^m \in \cK^m\),
\begin{equation*}
\norm{D(u \circ \Phi)}_{L^{q}(\sigma^m + Q^m_{2\rho\eta})} \leq C'' \norm{Du}_{L^{q}(\sigma^m + Q^m_{2\rho\eta})},
\end{equation*}
for some constant \(C'' > 0\) depending on \(m\), \(q\) and \(\rho\).
\end{addendum}

\begin{proof}
Since \(u \in W^{1, q}(U^\ell + Q^m_{2\rho\eta}; \R^\nu)\), we may apply Proposition~\ref{openingpropGeneral} with \(k = 1\) and \(p = q\) in order to obtain a map \(\Phi: \R^m \to \R^m\) such that \(u \circ \Phi \in W^{1, q}(U^\ell + Q^m_{2\rho\eta}; \R^\nu)\) and for every \(\sigma^\ell \in \cU^\ell\),
\begin{equation*}
\norm{D(u \circ \Phi)}_{L^{q}(\sigma^\ell + Q^m_{2\rho\eta})} \leq C \norm{Du}_{L^{q}(\sigma^\ell + Q^m_{2\rho\eta})}.
\end{equation*}
Since the choice of the point \(z\) in the proof of Proposition~\ref{openinglemmaGeneral} can be done in a set of positive measure, we may do so by keeping the properties we already have for \(W^{k, p}\).

For every \(\sigma^m \in \cK^m\), if \(\sigma^{m, \ell}\) denotes the skeleton of dimension \(\ell\) of \(\sigma^m\), then by additivity of the integral,
\begin{equation*}
\norm{D(u \circ \Phi)}_{L^{q}((\sigma^{m, \ell} \cap U^\ell) + Q^m_{2\rho\eta})} \leq C \norm{Du}_{L^{q}((\sigma^{m, \ell} \cap U^\ell) + Q^m_{2\rho\eta})}.
\end{equation*}
Since \(\Phi\) coincides with the identity map in \((\sigma^m + Q^m_{2\rho\eta}) \setminus ((\sigma^{m, \ell} \cap U^\ell) + Q^m_{2\rho\eta})\),
\[
\norm{D(u \circ \Phi)}_{L^{q}(\sigma^m + Q^m_{2\rho\eta})} \leq C \norm{Du}_{L^{q}(\sigma^m + Q^m_{2\rho\eta})}.
\]
This concludes the proof.
\end{proof}

\begin{addendum}[{Proposition~\ref{openingpropGeneral}}]
\label{addendumVMO}
Let \(\cK^m\) be a cubication containing \(\cU^m\). 
If \(u\in W^{1, kp}(K^m + Q^m_{2\rho\eta}; \R^\nu)\), then the map \(\Phi : \R^m \to \R^m\) given by Proposition~\ref{openingpropGeneral} and Addendum~\ref{addendumW1kp} above with \(q = kp\) satisfies 
\[
  \lim_{r \to 0} \sup_{Q_r^m(a) \subset U^\ell + Q^m_{\rho\eta}} \frac{r^{\frac{\ell}{kp} - 1} }{\abs{Q_r^m}^2} \int\limits_{Q_r^m (a)}\int\limits_{Q_r^m (a)} \abs{u \circ \Phi(x) - u \circ \Phi (y)} \dif x \dif y = 0
\]
and for every \(\sigma^m \in \cU^m\) and for every \(a \in \sigma^m\) such that \(Q_r^m (a) \subset U^\ell + Q^m_{\rho\eta}\),
\[
\frac{1}{\abs{Q_r^m}^2} \int\limits_{Q_r^m (a)}\int\limits_{Q_r^m (a)} \abs{u \circ \Phi(x) - u \circ \Phi (y)} \dif x \dif y \le \frac{C''' r^{1-\frac{\ell}{kp}}}{\eta^{\frac{m-\ell}{kp}}} \norm{Du}_{L^{kp}(\sigma^m + Q_{2\rho\eta}^m)},
\]
for some constant \(C''' > 0\) depending on \(m\), \(kp\) and \(\rho\).
\end{addendum}

If \(kp \ge \ell\), then the limit above implies that \(u \circ \Phi\) belongs to the space of functions of vanishing mean oscillation \(\textrm{VMO}(U^\ell + Q^m_{\rho\eta}; \R^\nu)\) and the estimate yields an estimate on the BMO seminorm on the domain \(U^\ell + Q^m_{\rho\eta}\) as defined by Jones \cite{Jones1980}. 
If \(kp > \ell > 0\), then the estimate  implies that \(u \circ \Phi \in C^{0, 1 - \frac{\ell}{kp}}(U^\ell + Q^m_{\rho\eta}; \R^\nu)\) with an upper bound on the \(C^{0, 1 - \frac{\ell}{kp}}\) seminorm of \(u \circ \Phi\) \cite{Campanato1963}.
The estimates of this addendum are not really useful when \(kp < \ell\) since in this case \(\lim\limits_{r \to 0} r^{1 - \frac{\ell}{kp}} = +\infty\).

\begin{proof}[Proof of Addendum~\ref{addendumVMO}] 
Fix \(Q^m_r (a) \subset U^\ell + Q^{m}_{\rho \eta}\). Then \(a\in U^\ell+Q^{m}_{\rho\eta-r}\). Hence there exists an \(\ell\) dimensional face  \(\tau^\ell \in \cU^\ell\) such that \(Q^m_r (a) \subset \tau^\ell + Q^{m}_{\rho \eta} \). 
Without loss of generality, we may assume that \(\tau^\ell=Q^\ell_{\eta}\times\{0^{m-\ell}\}\subset \R^\ell\times \R^{m-\ell}\). 
From Proposition~\ref{openingpropGeneral} \((\ref{itemgenopeningprop1})\), the map \(\Phi\) is constant on the \(m-\ell\) dimensional cubes of radius \(\rho\eta\) which are orthogonal to \(Q^\ell_{(1+\rho)\eta}\times \{0^{m-\ell}\}\). 
Writing \(Q^m_r(a) = Q^\ell_r(a') \times Q^{m-\ell}_r(a'')\), then  \(u\circ \Phi\) only depends on the first \(\ell\) dimensional variables in \(Q^m_r(a)\).
Let \(v : Q_{(1 + \rho)\eta}^\ell \to \R^\nu\) be the function defined by
\[
v(x') = (u \circ \Phi)(x', a'').
\]
By Addendum~\ref{addendumW1kp} above with \(q = kp\), \(u \circ \Phi \in W^{1, kp}(Q_{(1 + \rho)\eta}^\ell \times Q_{\rho\eta}^{m-\ell}; \R^\nu)\), whence
\[
v \in W^{1, kp}(Q_{(1 + \rho)\eta}^\ell; \R^\nu).
\]
Note that
\begin{multline*}
\frac{1}{\abs{Q_r^m}^2}\int\limits_{Q^m_{r}(a)}\int\limits_{Q^m_{r}(a)} \abs{u \circ \Phi(x) - u \circ \Phi(y)} \dif x \dif y\\
 = \frac{1}{\abs{Q_r^\ell}^2}\int\limits_{Q^\ell_{r}(a')}\int\limits_{Q^\ell_{r}(a')} \abs{v(x') - v(y')} \dif x' \dif y'.
\end{multline*}
By the Poincar\'e-Wirtinger inequality, 
\[
\frac{1}{\abs{Q_r^\ell}^2}\int\limits_{Q^\ell_{r}(a')}\int\limits_{Q^\ell_{r}(a')} \abs{v(x') - v(y')} \dif x' \dif y' \le C_1 r^{1 - \frac{\ell}{kp}} \norm{Dv}_{L^{kp}(Q^\ell_{r}(a'))}.
\]
Thus,
\[
\frac{1}{\abs{Q_r^m}^2}\int\limits_{Q^m_{r}(a)}\int\limits_{Q^m_{r}(a)} \abs{u \circ \Phi(x) - u \circ \Phi(y)} \dif x \dif y \le C_1 r^{1 - \frac{\ell}{kp}} \norm{Dv}_{L^{kp}(Q^\ell_{r}(a'))}
\]
and this implies the first part of the conclusion.

In order to get the estimate of the oscillation of \(u \circ \Phi\) in terms of \(\norm{D(u\circ \Phi)}_{L^{kp}}\), note that
\[
\norm{D(u\circ \Phi)}_{L^{kp}(Q^\ell_r(a') \times Q^{m-\ell}_{\rho\eta}(a''))} = (2\rho\eta)^\frac{m-\ell}{kp} \norm{Dv}_{L^{kp}(Q^\ell_{r}(a'))}.
\]
This implies for any \(\sigma^m\in\mathcal{U}^m\) such that \(\tau^\ell\subset \sigma^m\)
\[
\begin{split}
\norm{Dv}_{L^{kp}(Q^\ell_{r})} 
& = \frac{1}{(2\rho\eta)^\frac{m-\ell}{kp}} \norm{D( u\circ \Phi)}_{L^{kp}(Q^\ell_r \times Q^{m-\ell}_{\rho\eta})}\\
& \leq  \frac{1}{(2\rho\eta)^{\frac{m-\ell}{kp}}} \norm{D(u\circ \Phi)}_{L^{kp}(\sigma^\ell+Q^m_{\rho\eta})}\\
& \leq  \frac{1}{(2 \rho\eta)^{\frac{m - \ell}{kp}}} \norm{D(u\circ \Phi)}_{L^{kp}(\sigma^m+Q^m_{\rho\eta})}.
\end{split}
\]
Thus,
\[
  \frac{1}{\abs{Q_r^m}^2}\int\limits_{Q^m_{r}(a)}\int\limits_{Q^m_{r}(a)} \abs{u \circ \Phi(x) - u \circ \Phi(y)} \dif x \dif y  
  \le \frac{C_2 r^{1 - \frac{\ell}{k p}}} {(\rho\eta)^{\frac{m - \ell}{kp}}} \norm{D(u\circ \Phi)}_{L^{kp}(\sigma^m + Q_{\rho\eta}^m)}.
\]
By Addendum~\ref{addendumW1kp} above,
\[
\norm{D(u\circ \Phi)}_{L^{kp}(\sigma^m + Q_{\rho\eta}^m)} \le C_3 \norm{Du}_{L^{kp}(\sigma^m + Q_{2\rho\eta}^m)}.
\]
This proves the estimate that we claimed.
\end{proof}

\section{Adaptive smoothing}

Given \(u \in W^{k, p}(\Omega; \R^\nu)\), we would like to consider  a convolution of \(u\) with a parameter which may depend on the point where we compute the convolution itself. 
The main reason is that we want to choose the convolution parameter by taking into account the mean oscillation of \(u\): we choose a large parameter where \(u\) does not oscillate too much and a small parameter elsewhere.

For this purpose, consider a function \(u \in L^1(\Omega; \R^\nu)\). Let \(\varphi\) be a \emph{mollifier}, in other words,
\[
\varphi \in C_c^\infty(B_1^m), \quad \varphi \ge 0\ \text{in \(B_1^m\)} \quad \text{and} \quad \int\limits_{B_1^m} \varphi = 1.
\]
For every \(s \ge 0\) and for every \(x \in \Omega\) such that \(d(x, \partial\Omega) \ge s\), we may consider the convolution
\[
(\varphi_s \ast u)(x) = \int\limits_{B_1^m} \varphi(z) u(x + s z) \dif z.
\]
We may keep in mind that with this definition,
\[
(\varphi_0 \ast u)(x) = \int\limits_{B_1^m} \varphi(z) \dif z \, u(x) = u(x).
\]
This way of writing the convolution has the advantage that we may treat the cases \(s = 0\) and \(s > 0\) using the same formula. 

We now introduce a nonconstant parameter in the convolution given by a nonnegative function \(\psi \in C^\infty(\Omega)\). The convolution
\[
\varphi_\psi \ast u : \big\{x\in \Omega : \dist{(x, \partial\Omega)} \ge \psi(x) \big\} \to \R^\nu
\] 
is well-defined and if \(\psi(a) > 0\) and \(\abs{D\psi (a)} < 1\) at some point \(a \in \Omega\), then by a change of variable in the integral the map \(\varphi_\psi \ast u\) is smooth in a neighborhood of \(a\).

\begin{proposition}
\label{lemmaConvolutionEstimatesLp}
Let \(\varphi \in C_c^\infty(B_1^m)\) be a mollifier and let \(\psi \in C^\infty(\Omega)\) be a nonnegative function such that \(\norm{D\psi}_{L^\infty(\Omega)} < 1\).
Then, for every \(u\in L^{p}(\Omega; \R^\nu)\) and for every open set \(\omega \subset \big\{x\in \Omega : \dist{(x, \partial\Omega)} \ge \psi(x) \big\}\), \(\varphi_{\psi} \ast u \in L^{p}(\omega; \R^\nu)\), 
\begin{equation*}
\label{ineqConvolLp}
\norm{\varphi_\psi \ast u}_{L^p(\omega)} \le \frac{1}{(1 - \norm{D\psi}_{L^\infty(\omega)})^\frac{1}{p}} \norm{u}_{L^p(\Omega)},
\end{equation*}
and
\[
\norm{\varphi_{\psi} \ast u - u}_{L^p(\omega)}\\
\leq \sup_{v \in B_1^m}{\norm{\tau_{\psi v}u - u}_{L^p(\omega)}},
\]
where \(\tau_{\psi v} u (x) = u(x + \psi(x)v)\).
\end{proposition}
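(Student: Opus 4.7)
The plan is to exploit the pointwise representation
\[
(\varphi_\psi \ast u)(x) = \int_{B_1^m} \varphi(z)\, u(x + \psi(x) z) \dif z,
\]
which is well-defined for almost every \(x \in \omega\) since \(\abs{x + \psi(x) z - x} \le \psi(x) \le \dist{(x, \partial \Omega)}\) whenever \(\abs{z} \le 1\). Both conclusions will follow from this formula combined with the same change-of-variables ingredient, applied in two different ways.

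For the \(L^p\) estimate, I would first apply Jensen's inequality with respect to the probability measure \(\varphi(z) \dif z\) on \(B_1^m\) to obtain \(\abs{(\varphi_\psi \ast u)(x)}^p \le \int_{B_1^m} \varphi(z) \abs{u(x + \psi(x) z)}^p \dif z\), then integrate over \(\omega\) and swap the integrals by Fubini. For each fixed \(z \in B_1^m\), I would change variables via \(y = T_z(x) := x + \psi(x) z\). The differential \(DT_z(x) = I + z \otimes D\psi(x)\) is a rank-one perturbation of the identity, so by the matrix determinant lemma
\[
\det DT_z(x) = 1 + z \cdot D\psi(x) \ge 1 - \norm{D\psi}_{L^\infty(\omega)} > 0.
\]
Since \(T_z(\omega) \subset \Omega\) by the hypothesis on \(\omega\), and \(T_z\) is globally injective on \(\omega\) (see below), the change-of-variables formula yields
\[
\int_\omega \abs{u(T_z(x))}^p \dif x \le \frac{1}{1 - \norm{D\psi}_{L^\infty(\omega)}} \int_\Omega \abs{u(y)}^p \dif y,
\]
and integrating against \(\varphi(z) \dif z\) using \(\int \varphi = 1\) produces the first inequality after taking \(p\)th roots.

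For the second estimate, since \(\int \varphi = 1\) I can rewrite
\[
(\varphi_\psi \ast u)(x) - u(x) = \int_{B_1^m} \varphi(z) \bigl[\tau_{\psi z} u(x) - u(x)\bigr] \dif z,
\]
and apply Minkowski's integral inequality in \(L^p(\omega)\) to move the norm inside the \(z\)-integral; bounding the resulting integrand by its supremum over \(v \in B_1^m\) and using \(\int \varphi = 1\) once more gives the claimed bound.

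The main technical point is the global injectivity of \(T_z\) needed to invoke the change-of-variables formula, and this is where the strict inequality \(\norm{D\psi}_{L^\infty} < 1\) (rather than \(\le 1\)) is essential. Indeed, if \(T_z(x_1) = T_z(x_2)\) with \(x_1 \ne x_2\), then \(x_1 - x_2 = (\psi(x_2) - \psi(x_1))z\) forces \(\abs{x_1 - x_2} \le \abs{\psi(x_1) - \psi(x_2)}\); applying the mean value theorem to \(\psi\) along the segment joining \(x_1\) to \(x_2\) (which the distance hypothesis keeps inside \(\Omega\)) yields the opposite estimate \(\abs{\psi(x_1) - \psi(x_2)} \le \norm{D\psi}_{L^\infty(\omega)} \abs{x_1 - x_2} < \abs{x_1 - x_2}\), a contradiction. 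If needed, the formal application of the change-of-variables formula can first be carried out for \(u \in C_c^\infty(\Omega)\) and then extended by density in \(L^p\).
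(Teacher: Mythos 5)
Your proof is correct, and it follows essentially the same route the paper has in mind. The paper does not actually print a proof of this proposition (it treats it as standard), but in its proof of the companion Proposition it uses exactly your two ingredients: Minkowski's integral inequality to bring the norm inside the \(z\)-integral, and the change of variable \(y=x+\psi(x)z\) together with the Jacobian lower bound \(\det\bigl(\Id + z\otimes D\psi(x)\bigr) = 1 + z\cdot D\psi(x) \ge 1-\norm{D\psi}_{L^\infty(\omega)}\). Your use of Jensen for the first estimate is an equally valid alternative to Minkowski there. Your injectivity justification for \(T_z\) is also sound, including the parenthetical: if \(T_z(x_1)=T_z(x_2)\) then \(\abs{x_1-x_2}\le\max\{\psi(x_1),\psi(x_2)\}\le\max_i\dist(x_i,\partial\Omega)\), which places the open segment \([x_1,x_2)\) inside the ball \(B\bigl(x_i,\dist(x_i,\partial\Omega)\bigr)\subset\Omega\) for the appropriate \(i\), so the mean-value argument along that segment is legitimate even without convexity of \(\Omega\). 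No gap.
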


For \(p > 1\), it is possible to obtain an estimate for \(\norm{\varphi_\psi \ast u}_{L^p(\omega)}\) without any dependence on \(\psi\) by the theory of the Hardy-Littlewood maximal function \cite{Stein}; this approach fails for \(p=1\).

In the context of the proposition above, one can  prove in a  standard way the following statement:  given \(u\in L^{p}(\Omega;\R^\nu)\),  \(0 \le \beta < 1\) and  \(\varepsilon >0\), there exists \(\delta>0\) such that for any nonnegative function \(\psi \in C^\infty(\Omega)\) satisfying \(\norm{\psi}_{L^\infty(\Omega)} \le \delta\)  and \(\norm{D\psi}_{L^\infty(\Omega)} \le \beta\), and for every open set \(\omega \subset \big\{x\in \Omega : \dist{(x, \partial\Omega)} \ge \psi(x) \big\}\),
\[
\sup_{v \in B_1^m}{\norm{\tau_{\psi v}u - u}_{L^p(\omega)}} \le \varepsilon.
\]

We may pursue these estimates for maps in \(W^{k, p}(\Omega; \R^\nu)\):

\begin{proposition}
\label{lemmaConvolutionEstimates}
Let \(\varphi \in C_c^\infty(B_1^m)\) be a mollifier and let \(\psi \in C^\infty(\Omega)\) be a nonnegative function such that \(\norm{D\psi}_{L^\infty(\Omega)} < 1\).
For every \(k \in \N_*\), for every  \(u\in W^{k, p}(\Omega; \R^\nu)\) and for every open set \(\omega \subset \big\{x\in \Omega : \dist{(x, \partial\Omega)} \ge \psi(x) \big\}\),  \(\varphi_{\psi} \ast u \in W^{k, p}(\omega; \R^\nu)\) and for every \(j \in \{1, \dots, k\}\),
\[
\eta^{j} \norm{D^j(\varphi_{\psi} \ast u)}_{L^p(\omega)} \leq \frac{C}{(1 - \norm{D\psi}_{L^\infty(\omega)})^\frac{1}{p}} \sum_{i=1}^j  \eta^{i} \norm{D^i u}_{L^p(\Omega)},
\]
and
\begin{multline*}
\eta^{j} \norm{D^j(\varphi_{\psi} \ast u) - D^j u}_{L^p(\omega)}\\
\leq \sup_{v \in B_1^m}{\eta^{j} \norm{\tau_{\psi v}(D^j u) - D^j u}_{L^p(\omega)}} + \frac{C'}{(1 - \norm{D\psi}_{L^\infty(\omega)})^\frac{1}{p}} \sum_{i=1}^j  \eta^{i} \norm{D^i u}_{L^p(A)},
\end{multline*}
for some constants \(C > 0\) and \(C' > 0\) depending on \(m\), \(k\) and \(p\), where
\[
 A = \bigcup_{x \in  \omega \cap\supp{D\psi}} B_{\psi(x)}^m(x)
\] 
and \(\eta > 0\) is such that for every \(j \in \{2, \dotsc, k\}\),
\[
  \eta^{j} \norm{D^j \psi}_{L^\infty(\omega)} \le \eta.
\]
\end{proposition}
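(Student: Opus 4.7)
The plan is to differentiate under the integral sign in
\[
(\varphi_\psi \ast u)(x) = \int_{B_1^m} \varphi(z)\, u(x + \psi(x)z) \dif z,
\]
apply Faà di Bruno to the composition $u \circ F_z$ where $F_z(x) = x + \psi(x)z$, and control each resulting term either by a change of variable $y = F_z(x)$ (as in the proof of Proposition~\ref{lemmaConvolutionEstimatesLp}) or by absorbing the derivatives of $\psi$ into powers of $\eta$ using the hypothesis. By a standard limiting argument (approximating $u$ by smooth maps in $W^{k,p}(\Omega;\R^\nu)$), it suffices to establish the estimates for $u \in C^\infty(\Omega;\R^\nu)$ and pass to the limit. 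I note that $DF_z(x) = I + z \otimes D\psi(x)$ and $D^s F_z(x) = z \otimes D^s\psi(x)$ for $s \ge 2$.

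\textbf{First estimate.} Faà di Bruno writes $D^j(u \circ F_z)(x)$ as a sum over tuples $(t_1,\dotsc,t_i)$ with $t_s \ge 1$ and $t_1+\dotsb+t_i = j$ of products of the form $(D^i u)(F_z(x)) \cdot D^{t_1}F_z(x) \dotsm D^{t_i}F_z(x)$. Each factor with $t_s=1$ is bounded by a constant (since $\abs{z}\le 1$ and $\norm{D\psi}_{L^\infty} < 1$), while each factor with $t_s \ge 2$ is bounded by $\norm{D^{t_s}\psi}_{L^\infty(\omega)}$. Using the hypothesis $\eta^{t_s}\norm{D^{t_s}\psi}_{L^\infty(\omega)} \le \eta$ for $t_s \ge 2$, together with $\eta^j = \prod_s \eta^{t_s}$, one bounds $\eta^j$ times each such contribution by $C \eta^i \abs{(D^i u)(F_z(x))}$. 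Integrating against $\varphi(z)\dif z$ and taking $L^p$ norms in $x \in \omega$, one applies for each fixed $z$ the change of variable $y = F_z(x)$, whose Jacobian is at least $1 - \norm{D\psi}_{L^\infty(\omega)} > 0$; this produces the factor $1/(1-\norm{D\psi}_{L^\infty(\omega)})^{1/p}$ and yields the first estimate.

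\textbf{Second estimate.} I decompose $D^j(\varphi_\psi \ast u)(x) - D^j u(x)$ according to the Faà di Bruno partitions. The ``diagonal'' partition $t_1 = \dotsb = t_j = 1$ contributes
\[
\int_{B_1^m} \varphi(z)\, (D^j u)(F_z(x))\big[(I + z \otimes D\psi(x))^{\otimes j}\big] \dif z;
\]
expanding the tensor power isolates the main term $\int \varphi(z)\, \tau_{\psi z}(D^j u)(x) \dif z$, whose difference with $D^j u(x) = \int \varphi(z)\, D^j u(x) \dif z$ is bounded in $L^p(\omega)$ by $\sup_{v \in B_1^m} \norm{\tau_{\psi v}(D^j u) - D^j u}_{L^p(\omega)}$ via Minkowski's inequality. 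Every remaining term of the binomial expansion carries at least one factor $D\psi(x)$, and every non-diagonal Faà di Bruno term carries at least one $D^{t_s}\psi(x)$ with $t_s \ge 2$; these ``correction'' integrands therefore vanish outside $\omega \cap \supp D\psi$, and for $x$ in this set the argument $F_z(x)$ lies in $B^m_{\psi(x)}(x) \subset A$. Applying the first-estimate argument to these contributions (with $\Omega$ replaced by $A$) produces the second term $C' \sum_{i=1}^j \eta^i \norm{D^i u}_{L^p(A)}/(1-\norm{D\psi}_{L^\infty(\omega)})^{1/p}$.

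\textbf{Main obstacle.} The delicate point is the second estimate: one must carefully separate from Faà di Bruno the unique ``main'' term producing $\tau_{\psi v}(D^j u) - D^j u$, and verify that every other contribution carries at least one derivative of $\psi$, which localizes the relevant values of $D^i u$ to the set $A$. Once this bookkeeping is done, the rest reduces to the change-of-variable computation already used in Proposition~\ref{lemmaConvolutionEstimatesLp}, with powers of $\eta$ inserted to match the scaling of higher derivatives of $\psi$.
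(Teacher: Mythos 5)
Your proposal is correct and follows essentially the same route as the paper's own proof: you isolate from the chain-rule (Fa\`a di Bruno) expansion the unique term $\int_{B_1^m}\varphi(z)\,D^ju(x+\psi(x)z)\,\mathrm dz$ whose difference with $D^ju(x)$ yields the translation modulus (controlled by Minkowski's inequality), observe that every remaining term carries a factor $D\psi$ or a higher derivative $D^s\psi$ with $s\ge 2$ and is therefore supported on $\supp D\psi$ (hence localized to $A$ after composing with $x\mapsto x+\psi(x)z$), and absorb those extra derivative factors into powers of $\eta$ before changing variables with the Jacobian lower bound $1-\norm{D\psi}_{L^\infty(\omega)}$. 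The only cosmetic difference is that you present the chain-rule coefficients as products $D^{t_1}F_z\dotsm D^{t_i}F_z$ and expand the tensor power $(I+z\otimes D\psi)^{\otimes j}$ explicitly, whereas the paper writes the equivalent expansion directly with partition multiplicities $(\alpha_1,\dots,\alpha_j)$ and bounds $\bigl|(\Id+D\psi(x)\otimes z)^j-\Id\bigr|\le C\,|D\psi(x)|$ without carrying out the binomial expansion.
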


\begin{proof}
We only prove the second estimate. We assume for simplicity that \(u \in C^\infty(\Omega; \R^\nu)\).
For every \(x \in \omega\),
\[
(\varphi_\psi \ast u)(x) - u(x) = \int\limits_{B_1^m}  \varphi(z) \big[ u(x + \psi(x) z) - u(x)] \dif z .
\]
For every \(j \in \{1, \dots, k\}\), we have by the chain rule for higher order derivatives,
\begin{multline*}
\abs{D^j (\varphi_\psi \ast u)(x) - D^j u(x)}\\
 \le \int\limits_{B_1^m}  \varphi(z) \bigabs{D^ju(x + \psi(x) z)\circ (\Id +D\psi(x)\otimes z)^j - D^ju(x)} \dif z 
\\
+
C_1\sum_{i=1}^{j-1} \sum_{
\substack{\alpha_1 + 2 \alpha_2 + \dotsb + j \alpha_j = j\\
\alpha_1 + \alpha_2 + \dotsb + \alpha_j = i
}
}
(1 + \abs{D\psi(x)})^{\alpha_1} \abs{D^2\psi(x)}^{\alpha_2} \dotsm \abs{D^j\psi(x)}^{\alpha_j}
\times\\
\times \int\limits_{B_1^m} \varphi(z) \abs{D^i u(x + \psi(x)z)} \dif z.
\end{multline*}
Since \(\norm{D\psi}_{L^\infty(\Omega)} \le 1\), for every \(z \in B_1^m\),
\[
\bigabs{(\Id +D\psi(x)\otimes z)^j - \Id} \le C_2 \abs{D\psi(x)},
\]
and we have
\begin{multline*}
\abs{D^j (\varphi_\psi \ast u)(x) - D^j u(x)}\\
\\
\le \int\limits_{B_1^m}  \varphi(z) \bigabs{D^ju(x + \psi(x) z) - D^ju(x)} \dif z + C_2 \abs{D\psi(x)}\int\limits_{B_1^m}  \varphi(z) \bigabs{D^ju(x + \psi(x) z)} \dif z\\
+
C_1 \sum_{i=1}^{j-1} \sum_{
\substack{\alpha_1 + 2 \alpha_2 + \dotsb + j \alpha_j = j\\
\alpha_1 + \alpha_2 + \dotsb + \alpha_j = i
}
}
(1 + \abs{D\psi(x)})^{\alpha_1} \abs{D^2\psi(x)}^{\alpha_2} \dotsm \abs{D^j\psi(x)}^{\alpha_j} \times\\
\times \int\limits_{B_1^m} \varphi(z) \abs{D^i u(x + \psi(x)z)} \dif z.
\end{multline*}
The second and the third terms in the right hand side are supported on \(\supp{D\psi}\) since \(\alpha_s \ne 0\) for some \(s > 1\). Moreover, by the choice of \(\eta\),
\[
\begin{aligned}
(1 + \abs{D\psi(x)})^{\alpha_1} \abs{D^2\psi(x)}^{\alpha_2} \dotsm \abs{D^j\psi(x)}^{\alpha_j}
& \le (1 + 1) ^{\alpha_1} \Big(\frac{\eta}{\eta^2}\Big)^{\alpha_2} \dotsm \Big(\frac{\eta}{\eta^j}\Big)^{\alpha_j}\\
& = 2^{\alpha_1} \frac{\eta^{\alpha_1 + \alpha_2 + \dotsb + \alpha_j}}{\eta^{\alpha_1 + 2 \alpha_2 + \dotsb + j \alpha_j}} = 2^{\alpha_1} \frac{\eta^i}{\eta^j} \le 2^{j} \frac{\eta^i}{\eta^j}.
\end{aligned}
\]
Therefore,
\begin{multline*}
\abs{D^j (\varphi_\psi \ast u)(x) - D^j u(x)}
 \le \int\limits_{B_1^m}  \varphi(z) \bigabs{D^ju(x + \psi(x) z) - D^ju(x)} \dif z 
\\
+ C_3 \sum_{i=1}^j \frac{\eta^i}{\eta^j} \chi_{\supp{D\psi}}(x)
\int\limits_{B_1^m} \varphi(z) \abs{D^i u(x + \psi(x)z)} \dif z.
\end{multline*}
By the Minkowski inequality,
\begin{multline*}
\bigg( \int\limits_{\omega} \bigg(  \int\limits_{B_1^m}  \varphi(z) \abs{D^ju(x + \psi(x) z) - D^j u(x)} \dif z \bigg)^p \dif x \bigg)^{\frac{1}{p}} \\
\begin{aligned}
& \le  \int\limits_{B_1^m} \bigg(  \int\limits_{\omega} \abs{D^ju(x + \psi(x) z) - D^ju(x)}^p \dif x \bigg)^{\frac{1}{p}} \varphi(z) \dif z\\
&  \le  \sup_{v \in B_1^m}{\norm{\tau_{\psi v}(D^j u) - D^j u}_{L^p(\omega)}} \int\limits_{B_1^m} \varphi(z) \dif z\\
& = \sup_{v \in B_1^m}{\norm{\tau_{\psi v}(D^j u) - D^j u}_{L^p(\omega)}} ,
\end{aligned}
\end{multline*}
and for every \(i \in \{1, \dots, j\}\), we also have
\begin{multline*}
\bigg( \int\limits_{\omega \cap \supp{D\psi}} \bigg(  \int\limits_{B_1^m}  \varphi(z) \abs{D^i u(x + \psi(x) z)} \dif z \bigg)^p \dif x \bigg)^{\frac{1}{p}} \\
\le  \int\limits_{B_1^m}  \varphi(z) \bigg(  \int\limits_{\omega \cap \supp{D\psi}} \abs{D^i u(x + \psi(x) z)}^p \dif x \bigg)^{\frac{1}{p}}  \dif z. 
\end{multline*}
Using the change of variable \(y = x + \psi(x) z\) with respect to the variable \(x\),  we deduce by definition of \(A\) that
\begin{multline*}
\bigg( \int\limits_{\omega \cap \supp{D\psi}} \bigg(  \int\limits_{B_1^m}  \varphi(z) \abs{D^i u(x + \psi(x) z)} \dif z \bigg)^p \dif x \bigg)^{\frac{1}{p}} \\
\begin{aligned}
& \le \int\limits_{B_1^m}  \varphi(z) \bigg( \frac{1}{1 - \norm{D\psi}_{L^\infty(\omega)}}  \int\limits_{A} \abs{D^i u(y)}^p \dif y \bigg)^{\frac{1}{p}}  \dif z\\
& = \frac{1}{(1 - \norm{D\psi}_{L^\infty(\omega)})^\frac{1}{p}} \norm{D^i u}_{L^p(A)}. 
\end{aligned}
\end{multline*}
This gives the desired estimate for \(u \in C^\infty(\Omega; \R^\nu)\). The case of functions in \(W^{k, p}(\Omega; \R^\nu)\) follows by density.
\end{proof}


\section{Thickening}
Given a map \(u \in W^{k, p}(U^m; \R^\nu)\) which behaves nicely near the skeleton \(U^\ell\), we would like to construct a map \(u \circ \Phi\) that does not depend on the values of \(u\) away from the skeleton \(U^\ell\). The price to pay is that the map \(u \circ \Phi\) will be singular on the dual skeleton \(T^{\ell^*}\); these singularities will however be mild enough to allow \(u \circ \Phi\) to be in \(R_{\ell^*} (U^m; \R^\nu)\) and to satisfy \(W^{k, p}\) estimates for \(k p < \ell +1\). 
The thickening construction is related to homogenization of functions on cubes that are used in the study of density problems for \(k = 1\) \cite{Bethuel, BethuelZheng, Hang-Lin}.

The precise meaning of dual skeleton we use is the following:

\begin{definition}
Given \(\ell \in \{0, \dotsc, m-1\}\) and the \(\ell\) dimensional skeleton \(\mathcal{S}^\ell\) of a cubication \(\cS^m\), the dual skeleton \(\mathcal{T}^{\ell^*}\) of \(\cS^\ell\) is the skeleton of dimension \(\ell^* = m - \ell - 1\) composed of all cubes of the form \(\sigma^{\ell^*} + x - a\), where \(\sigma^{\ell^*} \in \cS^{\ell^*}\), \(a\) is the center and \(x\) the vertex of a cube of \(\cS^m\).
\end{definition}

The integer \(\ell^*\) gives the greatest dimension such that \(S^\ell \cap T^{\ell^*} = \emptyset.\)

\medskip

The proposition below provides the main properties of the map \(\Phi\):

\begin{proposition}\label{propositionthickeningfromaskeleton}
Let \(\ell \in \{0, \dotsc, m-1\}\), \(\eta > 0\), \(0  < \rho < 1\), \(\mathcal{S}^m\) be a cubication of \(\R^m\)  of radius \(\eta\), \(\mathcal{U}^m\) be a subskeleton of \(\cS^m\) and \(\cT^{\ell^*}\) be the dual skeleton of \(\cU^\ell\).
There exists a smooth map \(\Phi : \R^m \setminus T^{\ell^*} \to \R^m\) such that 
\begin{enumerate}[$(i)$]
\item \(\Phi\) is injective,
\item for every \(\sigma^m \in \cS^m\), \(\Phi(\sigma^m \setminus T^{\ell^*}) \subset \sigma^m \setminus T^{\ell^*}\),
\label{itempropositionthickeningfromaskeleton2}
\item \(\Supp{\Phi} \subset U^m + Q^{m}_{\rho\eta}\) and \(\Phi(U^m\setminus T^{\ell^*}) \subset U^\ell + Q^{m}_{\rho\eta}\),
\label{itempropositionthickeningfromaskeleton3}
\item for every \(j \in \N_*\) and for every \(x \in \R^m \setminus T^{\ell^*}\),
\[
\abs{D^j \Phi(x)} \le  \frac{C\eta}{\bigl(\dist(x, T^{\ell^*})\bigr)^j},
\]
for some constant \(C > 0\) depending on \(j\), \(m\) and \(\rho\),
\label{itempropositionthickeningfromaskeleton5}
\item for every \(0 < \beta < \ell + 1\), for every \(j \in \N_*\) and for every \(x \in \R^m \setminus T^{\ell^*}\),
\[
\eta^{j-1} \abs{D^j \Phi(x)} \le  C' \bigl(\jac{\Phi}(x)\bigr)^\frac{j}{\beta},
\]
for some constant \(C' > 0\) depending on \(\beta\), \(j\), \(m\) and \(\rho\).
\label{itempropositionthickeningfromaskeleton4}
\end{enumerate}
\end{proposition}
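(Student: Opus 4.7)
My plan is to construct $\Phi$ on each cube $\sigma^m \in \cU^m$ separately as a radial-type expansion pushing the cube away from its dual piece $T^{\ell^*} \cap \sigma^m$ into a $\rho\eta$-neighborhood of the $\ell$-faces, and then extend by the identity outside $U^m + Q^m_{\rho\eta}$. After scaling I assume $\eta = 1$. On a model cube $[-1,1]^m$ centered at $0$, the set $T^{\ell^*} \cap \sigma^m$ is an $\ell^*$-dimensional polyhedral complex through the center, while the target $(U^\ell + Q^m_\rho) \cap \sigma^m$ is a $\rho$-neighborhood of the $\ell$-faces of $\sigma^m$.

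I would build $\Phi$ piecewise on polyhedral regions adapted to the combinatorics of the cube, one for each $\ell$-face $F$ of $\sigma^m$. In adapted coordinates $(y, z) \in \R^\ell \times \R^{m - \ell}$ where $y$ runs along $F$ and $z$ is transverse, define
\[
\Phi(y,z) = \bigl(y,\, \phi(\abs{z})\, z/\abs{z}\bigr),
\]
where $\phi : (0, 1] \to [1-\rho, 1]$ is smooth, strictly increasing, and satisfies $\phi(r) = r$ in a neighborhood of $r = 1$. This last condition makes $\Phi$ equal the identity close to $\partial \sigma^m$, so the local pieces glue with each other and extend smoothly by the identity outside $U^m$; the matching across internal region boundaries inside $\sigma^m$ is enforced by the cubical symmetry and by a small identity buffer. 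Properties $(i)$--$(iii)$ are immediate from the construction, and a direct computation of the derivatives of the radial factor gives $\abs{D^j \Phi(x)} \le C\eta/\dist(x, T^{\ell^*})^j$, which is $(iv)$.

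The quantitatively delicate point is $(v)$. On each region one has $\jac \Phi = (\phi(r)/r)^\ell \phi'(r)$ with $r = \abs{z}$, comparable to $\dist(x, T^{\ell^*})$. A pure power-law profile $\phi(r) - (1-\rho) \sim r^\alpha$ would yield $(v)$ only for $\beta \le \ell + \alpha$, and no $\alpha < 1$ covers the full range $\beta < \ell + 1$ with a single $\Phi$. I would therefore take $\phi$ with $\phi'(r) \sim 1/(r \log^2(1/r))$ near $r = 0$, the slowest integrable blow-up of $\phi'$ compatible with $\phi$ staying bounded below by $1 - \rho$. This gives $\jac \Phi \sim 1/(r^{\ell+1} \log^2(1/r))$ near $0$, and $(v)$ reduces to
\[
r^{((\ell+1)/\beta - 1) j} \, \log^{2 j/\beta}(1/r) \;\le\; C'(\beta, j),
\]
which is satisfied uniformly in $r \in (0, 1]$ for every $\beta < \ell + 1$, since any strictly positive power of $r$ dominates any power of $\log(1/r)$ as $r \to 0$; the constant $C'$ degenerates naturally as $\beta \uparrow \ell + 1$.

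The main obstacle is precisely this requirement in $(v)$ that a single $\Phi$ produce the Jacobian estimate uniformly over the open range $\beta < \ell + 1$: it rules out the naive power-law radial profile and forces a logarithmic correction. The remaining points---smooth matching of the piecewise definition across internal region boundaries inside $\sigma^m$ and across neighboring cubes, the elementary verification of $(i)$--$(iv)$, and re-scaling back to arbitrary $\eta$---are technical but standard.
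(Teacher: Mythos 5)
Your overall strategy is genuinely different from the paper's (a one-shot radial map near each $\ell$-face, versus the paper's downward induction over face dimension), and there are serious gaps.

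The decisive problem is a dimension confusion in the choice of radial variable. You take $z\in\R^{m-\ell}$ transverse to the $\ell$-face $F$ and expand radially in $z$, so the singular locus of your map is $\{z=0\}$, an $\ell$-dimensional set. But the dual skeleton $T^{\ell^*}$ has dimension $\ell^*=m-\ell-1$, hence codimension $\ell+1$; only when $m=2\ell+1$ does $\{z=0\}$ agree with the local picture of $T^{\ell^*}$, and in general a map singular only on $\{z=0\}$ cannot be thought of as "pushing away from $T^{\ell^*}$." This error propagates into your Jacobian: for $\Phi(y,z)=(y,\phi(\abs{z})z/\abs{z})$ with $z\in\R^{m-\ell}$ one has $\jac\Phi=(\phi(r)/r)^{m-\ell-1}\phi'(r)$, not $(\phi(r)/r)^{\ell}\phi'(r)$. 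With the log-profile $\phi'(r)\sim 1/(r\log^2(1/r))$, the estimate $(v)$ that your construction delivers is valid only for $\beta<m-\ell$, not $\beta<\ell+1$. These ranges coincide only when $m=2\ell+1$, and in the regime the paper emphasizes as the model case ($m-1<kp<m$, so $\ell=m-1$) your range $\beta<1$ is far short of the needed $\beta<\ell+1=m$. The fix is that radial coordinates must be taken in the $\ell+1$ directions normal to $T^{\ell^*}$, not in the $m-\ell$ directions transverse to $F$: then $\jac\Phi\sim 1/(r^{\ell+1}\log^2(1/r))$ and one does get $\beta<\ell+1$.

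Even with the correct radial direction, a single global radial map does not exist because $T^{\ell^*}$ is a polyhedral complex, not a smooth submanifold; its local structure changes from stratum to stratum and has genuine corners. Your plan to glue piecewise radial maps over regions labeled by $\ell$-faces does not produce a well-defined map: on the interface between two adjacent regions, the two radial maps (each using its own adapted frame) agree only where $\phi(r)=r$, which is the identity buffer near $\partial\sigma^m$ but not in the interior, and the interfaces are not contained in $T^{\ell^*}$ when $\ell\ge 1$. Cubical symmetry gives permutation equivariance, not continuity. Also, a single radial push maps into an annulus in $z$-space, which intersected with a wedge is not contained in a $\rho\eta$-neighborhood of $U^\ell$, so property \((iii)\) is not delivered by this geometry alone. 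The paper resolves all of this by composing $m-\ell$ elementary thickenings, one per dimension $i=m,\dots,\ell+1$, each acting only on fresh $i$-cells and fixing all lower skeleta; the inductive compatibility property (the Thales identity $\dist(\Phi_i(x),T^r)\dist(x,T^{r+1})=\dist(\Phi_i(x),T^{r+1})\dist(x,T^r)$) is what makes successive steps glue, and the $+1$ in the range $\beta<\ell+1$ is gained precisely by this cascade starting from the trivial base case $\Phi_m=\Id$. Your intuition that a log-correction in the radial profile is forced by the open range of $\beta$ is exactly right and is what the paper does in Lemma~\ref{lemmaThickeningAroundDualSqueleton} with $\varphi(s)=\tfrac{1-\overline\rho}{s}\bigl(1+b/\ln\tfrac 1s\bigr)$; but this local device lives inside a dimension-by-dimension composition, not a single radial map.
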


This proposition gives \(W^{k, p}\) bounds on \(u \circ \Phi\) for every \(W^{k, p}\) function \(u\).
The proposition and the corollary below will be applied in the proof of Theorem~\ref{theoremDensityManifoldNontrivial} with \( \ell = \lfloor kp \rfloor \).

\begin{corollary}
\label{corollaryEstimateThickening}
Let \(\Phi : \R^m \setminus T^{\ell^*} \to \R^m\) be the map given by Proposition~\ref{propositionthickeningfromaskeleton}. If  \(\ell+1 > kp\), then for every \(u \in W^{k, p}(U^m + Q^{m}_{\rho\eta}; \R^\nu)\), \(u \circ \Phi \in W^{k, p}(U^m + Q^{m}_{\rho\eta}; \R^\nu)\) and for every \(j \in \{1, \dotsc, k\}\),
\[
 \eta^{j} \norm{D^j(u \circ \Phi)}_{L^p(U^m + Q^{m}_{\rho\eta})} \leq C'' \sum_{i=1}^j  \eta^{i} \norm{D^i u}_{L^p(U^m + Q^{m}_{\rho\eta})},
\]
for some constant \(C'' > 0\) depending on \(m\), \(k\), \(p\) and \(\rho\).
\end{corollary}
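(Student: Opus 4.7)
The plan is to derive the estimate from a pointwise Faà di Bruno expansion of \(D^j(u\circ \Phi)\), using the Jacobian bound of Proposition~\ref{propositionthickeningfromaskeleton}~\((\ref{itempropositionthickeningfromaskeleton4})\) in a decisive way, and then to integrate via a change of variables justified by the injectivity of \(\Phi\). A standard density argument reduces matters to the case \(u \in C^\infty(U^m + Q^m_{\rho\eta}; \R^\nu)\); the full statement follows by passing to the limit thanks to the linearity of the bound in \(u\).

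The pointwise heart of the argument is as follows. By the Faà di Bruno formula, for every \(x \in (U^m + Q^m_{\rho\eta}) \setminus T^{\ell^*}\),
\[
\abs{D^j(u\circ \Phi)(x)} \le C_1 \sum_{i=1}^j \abs{D^i u(\Phi(x))} \sum_{\substack{\alpha_1+\dotsb+\alpha_j=i\\ \alpha_1+2\alpha_2+\dotsb+j\alpha_j=j}} \prod_{s=1}^j \abs{D^s \Phi(x)}^{\alpha_s}.
\]
Since by assumption \(kp<\ell+1\) and \(j\le k\), the value \(\beta = jp\) lies in the admissible range \((0,\ell+1)\) of Proposition~\ref{propositionthickeningfromaskeleton}~\((\ref{itempropositionthickeningfromaskeleton4})\), and with this choice one gets \(\abs{D^s \Phi(x)}^{\alpha_s} \le C_2\,\eta^{-(s-1)\alpha_s}\,(\jac \Phi(x))^{s\alpha_s/(jp)}\). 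Using the identities \(\sum_s s\alpha_s = j\) and \(\sum_s \alpha_s = i\), the product telescopes into the single factor \(C_3\,\eta^{-(j-i)}(\jac \Phi(x))^{1/p}\), and multiplying the Faà di Bruno inequality by \(\eta^j\) I arrive at
\[
\eta^j \abs{D^j(u\circ \Phi)(x)} \le C_4 \sum_{i=1}^j \eta^i \abs{D^i u(\Phi(x))}\,(\jac\Phi(x))^{1/p}.
\]

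Raising this to the \(p\)-th power and integrating over \((U^m+Q^m_{\rho\eta})\setminus T^{\ell^*}\), the factor \(\jac \Phi(x)\) is precisely what the area formula needs. Using injectivity of \(\Phi\) together with Proposition~\ref{propositionthickeningfromaskeleton}~\((\ref{itempropositionthickeningfromaskeleton2})\) (which ensures that \(\Phi\) preserves each cube of \(\cS^m\), so that \((U^m+Q^m_{\rho\eta})\setminus T^{\ell^*}\) is sent back into \(U^m + Q^m_{\rho\eta}\)), the substitution \(y = \Phi(x)\) gives
\[
\int\limits_{(U^m+Q^m_{\rho\eta})\setminus T^{\ell^*}} \abs{D^i u(\Phi(x))}^p\,\jac \Phi(x)\,\dif x \le \int\limits_{U^m+Q^m_{\rho\eta}} \abs{D^i u(y)}^p \dif y,
\]
and extraction of \(p\)-th roots delivers the claimed inequality, first on \((U^m+Q^m_{\rho\eta})\setminus T^{\ell^*}\).

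Finally, to promote the conclusion from \(W^{k,p}\) off \(T^{\ell^*}\) to \(W^{k,p}(U^m+Q^m_{\rho\eta}; \R^\nu)\), I would invoke the fact that \(T^{\ell^*}\) is a finite union of affine planes of dimension \(m-\ell-1\), which is strictly less than \(m-kp\) by hypothesis. Sets of such Hausdorff dimension have vanishing \((k,p)\)-capacity and are removable for \(W^{k,p}\) functions, so the bound persists on the full domain. I expect the main delicate point to be precisely this removability step, together with the careful verification that \(\Phi\) really keeps \((U^m + Q^m_{\rho\eta})\setminus T^{\ell^*}\) inside the domain where \(u\) is defined; the algebraic miracle that makes the whole argument balance is the identity \(jp/\beta = 1\) with \(\beta = jp\), and the hypothesis \(kp < \ell + 1\) enters exactly twice, once to license this choice of \(\beta\) and once to make the singular set removable.
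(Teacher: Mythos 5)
Your proposal is correct and follows essentially the same route as the paper's own proof: a Faà di Bruno expansion, the Jacobian bound of Proposition~\ref{propositionthickeningfromaskeleton}~\((\ref{itempropositionthickeningfromaskeleton4})\) with \(\beta = jp\) (licensed by \(kp<\ell+1\)), the change of variables via injectivity, and removability of \(T^{\ell^*}\) as a low-dimensional set. The only minor quibble is that the inclusion \(\Phi\big((U^m+Q^m_{\rho\eta})\setminus T^{\ell^*}\big)\subset U^m+Q^m_{\rho\eta}\) is more directly drawn from \(\Supp\Phi\subset U^m+Q^m_{\rho\eta}\) (property~\((\ref{itempropositionthickeningfromaskeleton3})\)) together with injectivity rather than from the cube-preservation property~\((\ref{itempropositionthickeningfromaskeleton2})\), since \(U^m+Q^m_{\rho\eta}\) overlaps cubes outside \(\cU^m\).
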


\begin{proof}
We first establish the estimate for a map \(u\) in \(C^{\infty}(U^m + Q^{m}_{\rho\eta} ; \R^{\nu})\). By the chain rule for higher-order derivatives, for every \(j \in \{1, \dots, k\}\) and for every \(x \in U^m \setminus T^{\ell^*}\),
\[
\abs{D^j (u \circ \Phi) (x)}^p \le C_1 \sum_{i=1}^j \sum_{\substack{1 \le t_1 \le \dotsc \le t_i\\ t_1 + \dotsb + t_i = j}} \abs{D^i u(\Phi(x))}^p \abs{D^{t_1} \Phi(x)}^p \dotsm \abs{D^{t_i} \Phi(x)}^p.
\]
Let \(0 < \beta < \ell + 1\). If \(1 \le t_1 \le \dotsc \le t_i\) and \(t_1 + \dotsb + t_i = j \), then by  property~\((\ref{itempropositionthickeningfromaskeleton4})\) of Proposition~\ref{propositionthickeningfromaskeleton},
\[
\abs{D^{t_1} \Phi(x)}^p \dotsm \abs{D^{t_i}\Phi(x)}^p \le C_2 \frac{\bigl(\jac{\Phi}(x)\bigr)^\frac{t_1 p}{\beta}}{\eta^{(t_1 - 1)p}} \dotsm \frac{\bigl(\jac{\Phi}(x)\bigr)^\frac{t_i p}{\beta}}{{\eta^{(t_i - 1)p}}} = C_2 \frac{\bigl(\jac{\Phi}(x)\bigr)^\frac{jp}{\beta}}{\eta^{(j-i)p}}.
\]
Since \(kp < \ell + 1\), we may take \(\beta = jp\). Thus,
\[
\abs{D^{t_1} \Phi(x)}^p \dotsm \abs{D^{t_i}\Phi(x)}^p \le C_2 \frac{\jac{\Phi}(x)}{\eta^{(j-i)p}}
\]
and this implies
\[
\eta^{jp}\abs{D^j (u \circ \Phi) (x)}^p \le C_3 \sum_{i=1}^j \eta^{ip}\abs{D^i u(\Phi(x))}^p \jac{\Phi}(x).
\]
Since \(\Phi\) is injective and \(\Supp{\Phi} \subset U^m + Q^m_{\rho\eta}\), we have \(\Phi\big( (U^m + Q^{m}_{\rho\eta}) \setminus T^{\ell^*} \big) \subset U^m + Q^m_{\rho\eta}\).
Thus, by the change of variable formula,
\[
\begin{aligned}
\int\limits_{(U^m + Q^{m}_{\rho\eta}) \setminus T^{\ell^*}} \eta^{jp}\abs{D^j (u \circ \Phi)}^p 
&\le  C_3 \sum_{i=1}^j \int\limits_{(U^m + Q^{m}_{\rho\eta}) \setminus T^{\ell^*}} \eta^{ip}\abs{(D^i u) \circ \Phi}^p \jac{\Phi}\\
&\le C_3 \sum_{i=1}^j \int\limits_{U^m + Q^{m}_{\rho\eta}} \eta^{ip}\abs{D^i u}^p
\end{aligned}
\]
and \(u \circ \Phi \in W^{k, p}((U^m + Q^{m}_{\rho\eta}) \setminus T^{\ell^*}; \R^\nu)\). Since \(\ell > 0\), the dimension of the skeleton \(T^{\ell^*}\) is strictly less than \(m - 1\). Thus, 
\(u \circ \Phi \in W^{k, p}(U^m + Q^{m}_{\rho\eta}; \R^\nu)\).
By density of smooth maps in \(W^{k, p}(U^m + Q^{m}_{\rho\eta}; \R^\nu)\), we deduce that for every \(u \in W^{k, p}(U^m + Q^{m}_{\rho\eta}; \R^\nu)\), the function \(u \circ \Phi\) also belongs to this space and satisfies the estimate above.
\end{proof}

We describe the construction of the map \(\Phi\) given by Proposition~\ref{propositionthickeningfromaskeleton} in the case of only one \(\ell\) dimensional cube:

\begin{proposition}
\label{lemmaThickeningFaceFromPrimalSkeleton}
Let \(\ell \in \{1, \dotsc, m\}\), \(\eta > 0\), \(0 < \underline{\rho}< \rho <\overline{\rho} < 1\) and \(T = \{0^\ell\} \times Q^{m-\ell}_{\rho\eta}\).
There exists a smooth function \(\lambda : \R^m \setminus T \to [1,\infty)\) such that if \(\Phi : \R^m \setminus T  \to \R^m\) is defined for \(x = (x', x'') \in (\R^\ell \times \R^{m - \ell}) \setminus T \) by
\[
\Phi(x) = (\lambda(x)x', x''),
\]
then
\begin{enumerate}[$(i)$]
\item \(\Phi\) is injective,
\label{itemthickeninglemma0.5}
\item 
\label{itemthickeninglemma2}
\(\Supp{\Phi} \subset Q^\ell_{(1-\rho)\eta} \times Q^{m-\ell}_{\rho\eta}\),
\item 
\(
 \Phi\big((Q^{\ell}_{(1-\rho)\eta }\times Q^{m-\ell}_{\underline{\rho}\eta}) \setminus T\big) \subset (Q^{\ell}_{(1-\rho)\eta } \setminus Q^{\ell}_{(1-\overline{\rho})\eta})\times Q^{m-\ell}_{\underline{\rho}\eta},
\)
\label{itemthickeninglemma1}
\item
for every \(j \in \N_*\) and for every 
\( x = (x', x'') \in (Q^\ell_{(1-\rho)\eta} \times Q^{m-\ell}_{\rho\eta}) \setminus T\), 
\[
\abs{D^j \Phi(x)} \le \frac{C\eta}{\abs{x'}^j},
\]
for some constant \(C > 0\) depending on \(j\), \(m\), \(\underline{\rho}\), \(\rho\) and \(\overline{\rho}\),
\label{itemthickeninglemma3}
\item
\label{itemthickeninglemma4}
for every \(0 < \beta < \ell\), for every \(j \in \N_*\) and for every 
\( x \in (Q^\ell_{(1-\rho)\eta} \times Q^{m-\ell}_{\rho\eta}) \setminus T\), 
\[
\eta^{j-1} \abs{D^j \Phi(x)} \le  C' \bigl(\jac{\Phi}(x)\bigr)^\frac{j}{\beta},
\]
for some constant \(C' > 0\) depending on \(\beta\), \(j\), \(m\), \(\underline{\rho}\), \(\rho\) and \(\overline{\rho}\).
\end{enumerate}
\end{proposition}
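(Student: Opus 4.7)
Since $\Phi$ must take the prescribed form $\Phi(x) = (\lambda(x) x', x'')$, the construction reduces to choosing the scalar function $\lambda \geq 1$. My plan is to encode the required geometry through a radial profile in the $x'$ variable that pushes points away from the locus $x' = 0$, smoothly modulated by a cutoff in $x''$ that localizes this push to a neighborhood of $T$.

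First I will construct three ingredients: a smooth, strictly increasing and concave profile $\mu : [0, \infty) \to [0, \infty)$ with $\lim_{r \to 0^+} \mu(r) = (1-\overline\rho)\eta$ and $\mu(r) = r$ for $r \geq (1-\rho)\eta$; a cutoff $\theta \in C_c^\infty(\R^{m-\ell})$ equal to $1$ on $Q^{m-\ell}_{\underline\rho\eta}$, with support compactly contained in $Q^{m-\ell}_{\rho\eta}$ and $|D^j\theta| \leq C\eta^{-j}$; and a smooth $1$-homogeneous positive function $N : \R^\ell \setminus \{0\} \to (0, \infty)$ comparable to $\|\cdot\|_\infty$. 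Then I will set
\[
\lambda(x) = 1 + \theta(x'')\Big(\frac{\mu(N(x'))}{N(x')} - 1\Big).
\]
Properties (i)--(iv) will follow from direct verifications: injectivity from the strict monotonicity of $r \mapsto r + \theta(x'')(\mu(r) - r)$ along $N$-rays (using $\mu' > 0$ and $\theta \in [0,1]$) combined with the preservation of $x''$; the support and image properties from the defining supports of $\theta$ and $\mu$, together with the $1$-homogeneity of $N$ and its comparability to $\|\cdot\|_\infty$; and the derivative bound (iv) from a chain rule computation in which the singular behavior is dominated by $\mu(N)/N$, whose $j$-th derivative scales as $\eta/|x'|^j$.

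The delicate point will be (v). Exploiting the block-triangular structure (since $\Phi$ fixes $x''$), $\jac \Phi = |\det D_{x'}(\lambda x')|$; by the rank-one update identity, $D_{x'}(\lambda x') = \lambda I + x' \otimes \nabla_{x'}\lambda$ has determinant $\lambda^{\ell-1}(\lambda + \nabla_{x'}\lambda \cdot x')$. In the inner region where $\theta = 1$, this reduces to $(\mu(N)/N)^{\ell-1}\mu'(N)$, so that (v) becomes a lower bound requirement on $\mu'(r)$ for small $r$. A uniform positive lower bound on $\mu'$ suffices when $\beta \leq \ell - 1$; to cover $\beta \in (\ell-1, \ell)$, one must let $\mu'(r)$ blow up as $r \to 0^+$ while remaining integrable, so that $\mu$ stays bounded. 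A profile satisfying $\mu'(r) \gtrsim 1/(r |\log(r/\eta)|^2)$ near $0$ yields $\jac \Phi \gtrsim (\eta/|x'|)^{\ell}/|\log(|x'|/\eta)|^2$, which dominates $(\eta/|x'|)^\beta$ for every $\beta < \ell$ up to a constant depending on $\beta$, completing the argument.
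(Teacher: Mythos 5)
Your approach is genuinely different from the paper's: you try to build the map as a \emph{single} radial push $\lambda(x) = 1 + \theta(x'')\bigl(\mu(N(x'))/N(x') - 1\bigr)$, whereas the paper composes a singular ball-radial map (Lemma~\ref{lemmaThickeningAroundDualSqueleton}, with cutoff placed \emph{under} the radial variable via $\zeta = \sqrt{|x'|^2 + \theta(x'')^2}$) with an outer cube diffeomorphism (Lemma~\ref{lemmaThickeningAroundPrimalSqueleton}). The factorization is not cosmetic; it is precisely what makes property~$(v)$ go through.

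The genuine gap is in property~$(v)$ on the transition region where $0 < \theta(x'') < 1$. With your $\lambda$, one computes $\jac\Phi = \lambda^{\ell-1}\bigl[(1-\theta) + \theta\,\mu'(N)\bigr]$ (your rank-one formula together with Euler's identity $\nabla N \cdot x' = N$). Now pick $x''$ with $\theta(x'') = \theta$ small and let $N = N(x')\to 0$, in the regime $(N/\eta)^j \ll \theta \ll N/\eta$. Then $\lambda \approx 1$ and $1-\theta \approx 1$, so $\jac\Phi \approx 1$; but $D^j_{x'}\lambda \sim \theta\,D^j_{x'}(\mu(N)/N) \sim \theta\,\eta/N^{j+1}$, so $\eta^{j-1}|D^j\Phi(x)| \sim \theta(\eta/N)^j \to \infty$. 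Hence $\eta^{j-1}|D^j\Phi| \le C'(\jac\Phi)^{j/\beta}$ fails for every $j \ge 2$. The paper's choice $\lambda = \varphi(\zeta)$ with $\zeta = \sqrt{|x'|^2 + \theta(x'')^2}$ avoids this: since $\zeta \ge \theta(x'')$, the singular profile only fires where \emph{both} $|x'|$ and $\theta(x'')$ are small, and $|D^j\Phi| \lesssim 1/\zeta^j$ while $\jac\Phi \gtrsim 1/\zeta^\beta$ scale compatibly.

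Two further problems, fixable but wrong as written. First, a concave $\mu$ with $\mu(0^+) = (1-\overline\rho)\eta > 0$ and $\mu = \mathrm{Id}$ beyond $(1-\rho)\eta$ is impossible: concavity plus $\mu'((1-\rho)\eta) = 1$ forces $\mu'\ge 1$ on $(0,(1-\rho)\eta)$, so $\mu((1-\rho)\eta) - \mu(0^+)\ge (1-\rho)\eta$, contradicting $\mu(0^+)>0$; drop concavity (injectivity only needs $\mu'>0$). Second, the cube support~$(ii)$ and cube escape~$(iii)$ pull $N$ in opposite directions: $(ii)$ requires $N \ge \|\cdot\|_\infty$ (so that $\lambda = 1$ outside $Q^\ell_{(1-\rho)\eta}$), while $(iii)$ with $\mu(0^+)=(1-\overline\rho)\eta$ requires $N \le \|\cdot\|_\infty$. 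This is salvageable only with a sharper tuning---a smooth norm satisfying $\|\cdot\|_\infty \le N \le c_2\|\cdot\|_\infty$ with $c_2 < (1-\rho)/(1-\overline\rho)$, and $\mu(0^+) = c_2(1-\overline\rho)\eta$---which is exactly the tension the paper dissolves by the two-map composition rather than by quasi-norm calibration.
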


We temporarily admit Proposition~\ref{lemmaThickeningFaceFromPrimalSkeleton} and we prove Proposition~\ref{propositionthickeningfromaskeleton}.

\begin{proof}[Proof of Proposition~\ref{propositionthickeningfromaskeleton}]
We first introduce finite sequences \((\rho_i)_{\ell\leq i\leq m}\) and \((\tau_i)_{\ell\leq i\leq m}\) such that
\[
0 < \rho_m < \tau_{m-1} < \rho_{m-1} < \dotsc < \rho_{\ell+1} < \tau_{\ell} <  \rho_\ell = \rho.
\]
For \(i = m\), we take \(\Phi_m=\Id\). 
Using downward induction, we shall define for every \(i \in \{\ell, \dotsc, m-1\}\) smooth maps \(\Phi_{i} : \R^m \setminus T^{i^*} \to \R^m\) such that 
\begin{enumerate}[(a)]
\item \label{item1240} \(\Phi_i\) is injective,
\item \label{item1241} for every \(\sigma^m \in \cS^m\) and for every \(r \in \{i^*, \dotsc, m-1\}\), \(\Phi_i(\sigma^m \setminus T^r) \subset \sigma^m \setminus T^r\),
\item \label{item1242} \(\Supp{\Phi_i} \subset U^m + Q^{m}_{\rho_i\eta}\),
\item \label{item1243} \(\Phi_i(U^m \setminus T^{i^*}) \subset U^i + Q^{m}_{\rho_i\eta}\),
\item \label{item1244} for every \(x\in \R^m \setminus T^{i^*}\) and for every \(r \in \{i^*, \dots, m-2\}\),
\[
\dist(\Phi_i(x), T^{r}) \dist(x, T^{r+1}) = \dist(\Phi_i(x), T^{r+1})\dist(x,T^{r}),
\]
\item \label{item1245} for every \(j \in \N_*\) and for every 
\(x \in \R^m \setminus T^{i^*}\),
\[
\abs{D^j \Phi_i(x)} \le  \frac{C\eta}{\bigl(\dist(x, T^{i^*})\bigr)^j},
\]
for some constant \(C > 0\) depending on \(j\), \(m\) and \(\rho\),
\item \label{item1246} for every \(0 < \beta < i+1\), for every \(j \in \N_*\) and for every 
\(x \in \R^m \setminus T^{i^*}\),
\[
\eta^{j-1} \abs{D^j \Phi_i(x)} \le  C' \bigl(\jac{\Phi_i}(x)\bigr)^\frac{j}{\beta},
\]
for some constant \(C' > 0\) depending on \(\beta\), \(j\), \(m\) and \(\rho\).
\end{enumerate}
The map \(\Phi_\ell\) will satisfy the conclusion of the proposition.

\medskip

Let \( i \in \{\ell+1, \dotsc, m\}\) and let \(\Theta_{i}\) be the map obtained from Proposition~\ref{lemmaThickeningFaceFromPrimalSkeleton} with parameters \(\underline\rho = \rho_{i}\), \(\rho = \tau_{i-1}\), \(\overline\rho = \rho_{i-1}\) and \(\ell = i\).  Given \(\sigma^i \in \mathcal{U}^{i}\), we may identify \(\sigma^i\) with \(Q^{i}_{\eta} \times \{0^{m-i}\}\) and \(T^{(i-1)^*} \cap (\sigma^i + Q_{\tau_{i-1}\eta}^m)\) with \(\{0^{i}\} \times Q_{\tau_{i-1}\eta}^{m-i}\). The map \(\Theta_i\) induces by isometry a map which we shall denote by \(\Theta_{\sigma^i}\).

Let \(\Psi_i : \R^m \setminus T^{(i-1)^*} \to \R^m\) be defined for every \(x \in \R^m \setminus T^{(i-1)^*}\) by 
\[
\Psi_i(x):=\begin{cases} 
           \Theta_{\sigma^i}(x) & \text{if } x \in \sigma^i + Q^{m}_{\tau_{i-1}\eta} \text{ for some } \sigma^i \in \mathcal{U}^i,\\
            x&\text{otherwise} .
         \end{cases}
\] 
We first explain why \(\Psi_i\) is well-defined. Since \(\Theta_{\sigma^i}\) coincides with the identity map on \(\partial\sigma^i + Q^m_{\tau_{i-1}\eta}\), then for every \(\sigma^i_1, \sigma^i_2 \in \mathcal{U}^{i}\), if \(x \in (\sigma_1^i + Q^{m}_{\tau_{i-1}\eta}) \cap (\sigma_2^i + Q^{m}_{\tau_{i-1}\eta})\) and \(\sigma_1^i \ne \sigma_2^i\), then
\[
\Theta_{\sigma_1^i}(x) = x = \Theta_{\sigma_2^i}(x).
\]
One also verifies directly that \(\Psi_i\) is smooth on \(\R^m \setminus T^{(i-1)^*}\).

Assuming that \(\Phi_i\) has been defined satisfying properties \eqref{item1240}--\eqref{item1246}, we let
\[
\Phi_{i-1}=\Psi_i \circ \Phi_i.
\]
The map \(\Phi_{i-1}\) is well-defined on \(\R^m\setminus T^{(i-1)^*}\) since \(\Phi_i(\R^m \setminus T^{(i-1)^*}) \subset \R^m \setminus T^{(i-1)^*}\).

We now check that \(\Phi_{i-1}\) satisfies all required properties.

\begin{proof}[Proof of Property \eqref{item1240}]
The map \(\Phi_{i-1}\) is injective since \(\Psi_i\) and \(\Phi_i\) are injective.
\end{proof}

\begin{proof}[Proof of Property \eqref{item1241}]
For every \(r\in \{(i - 1)^*, \dotsc, m - 1\}\) and for every \(\sigma^m \in \cS^m\),
we have by induction hypothesis \(\Phi_{i}(\sigma^m\setminus T^r)\subset \sigma^m \setminus T^r\). Moreover, for any  \(\sigma^m \in \cS^m\) and any \(\tilde\sigma^i\in \cU^{i}\), the formula of \(\Theta_i\) implies that \( \Theta_{\tilde\sigma^i}(\sigma^m \setminus T^r) \subset \sigma^m\setminus T^r.\)
\end{proof}

\begin{proof}[Proof of Property \eqref{item1242}]
By induction hypothesis \(\Phi_i\) coincides with the identity map outside \(U^{m}+Q^{m}_{\rho_{i}\eta}\).
By construction, \(\Psi_i\) coincides with the identity map outside \(U^{m}+Q^{m}_{\tau_{i-1}\eta}\)  (see Proposition~\ref{lemmaThickeningFaceFromPrimalSkeleton}, property \((\ref{itemthickeninglemma2})\)). Since \(\rho_i < \tau_{i-1} < \rho_{i-1}\), we deduce that \(\Supp{\Phi_{i-1}} \subset U^m + Q^{m}_{\rho_{i-1}\eta}\). 
\end{proof}

\begin{proof}[Proof of Property \eqref{item1243}]
By induction hypothesis (property \eqref{item1243})
\[
\Phi_{i}(U^m \setminus T^{i^*}) \subset U^i + Q^m_{\rho_i\eta}
\]
and (property \eqref{item1241})
\[
\Phi_i(\R^m \setminus T^{(i-1)^*}) \subset \R^m \setminus T^{(i-1)^*}.
\]
Since \(T^{(i-1)^\ast}\supset T^{i^\ast}\), we have 
\[
\Phi_{i}(U^m \setminus T^{(i-1)^*}) \subset (U^i + Q^m_{\rho_i\eta})\setminus T^{(i-1)^\ast}.
\]
By construction of \(\Theta_i\) (see Proposition~\ref{lemmaThickeningFaceFromPrimalSkeleton}, property \((\ref{itemthickeninglemma1})\)), for every \(\sigma^i \in \mathcal{U}^i\), 
\[
\Theta_{\sigma^i}\big((\sigma^i + Q^m_{\rho_i\eta}) \setminus T^{(i-1)^*}\big) \subset \partial \sigma^i + Q^m_{\rho_{i-1}\eta}.
\]
Taking the union over all faces \(\sigma^i \in \mathcal{U}^i\),  we get
\[
\Psi_i\big((U^i + Q^m_{\rho_i\eta})\setminus T^{(i-1)^\ast}\big)\subset U^{i-1} + Q^m_{\rho_{i-1}\eta}.
\]
Combining the information for \(\Phi_i\) and \(\Psi_i\), we obtain
\[
\Phi_{i-1}(U^m \setminus T^{(i-1)^\ast}) \subset U^{i-1} + Q^m_{\rho_{i-1}\eta}.
\qedhere
\]
\end{proof}

\begin{proof}[Proof of Property \eqref{item1244}]
Let \(r \in \{(i-1)^*, \dots, m-2\}\) and \(x\in \R^m \setminus T^{(i-1)^*}\).
If \(\Phi_{i-1}(x) = \Phi_{i}(x)\), then the conclusion follows by induction. 
If \(\Phi_{i-1}(x) \ne \Phi_{i}(x)\), then there exists 
\(\sigma^i \in \cU^i\) such that 
\(\Phi_{i}(x) \in \sigma^i + Q^{m}_{\tau_{i-1}\eta}\) and \(\Phi_{i-1}(x) =\Theta_{\sigma^i} (\Phi_{i}(x))\). Since \(\Phi_i(x) \in \Supp{\Psi_i}\), 
\[
\Phi_i(x) \in (\sigma^i + Q^{m}_{\tau_{i-1}\eta}) \setminus (\partial\sigma^i + Q^{m}_{\tau_{i-1}\eta}).
\]

Up to an isometry, we may assume that \(\sigma^i = Q_\eta^{i} \times \{0^{m - i}\}\). For every \(0 < \lambda < 1\)  and for every \(y=(y',y'')\in Q^{i}_{(1-\lambda)\eta}\times Q^{m-i}_{\lambda\eta}\),
\[
\dist(y, T^r)=\dist\big((y',0), T^r\cap (Q^{i}_{(1-\lambda)\eta}\times \{0^{m - i}\})\big).
\]
In view of the formula of \(\Theta_{i}\), we deduce that for every \(y\in (\sigma^i + Q^{m}_{\tau_{i-1}\eta}) \setminus (\partial\sigma^i \times Q^{m}_{\tau_{i-1}\eta})\), 
\[
\dist(\Theta_{\sigma^i}(y), T^r)\dist(y, T^{r+1})=\dist\big(\Theta_{\sigma^i}(y), T^{r+1})\dist(y, T^{r}\big);
\]
this identity is reminiscent of Thales' intercept theorem from Euclidean geometry.
By induction hypothesis, we then get
\[
\begin{split}
\dist(\Phi_{i-1}(x), T^{r})\dist(x,T^{r+1}) 
& = \dist(\Theta_{\sigma^i}(\Phi_{i}(x)), T^{r})\dist(x,T^{r+1})\\
& = \dist(\Theta_{\sigma^i}(\Phi_{i}(x)), T^{r+1})\dist(x,T^{r})\\
& = \dist(\Phi_{i-1}(x)), T^{r+1})\dist(x,T^{r}).
\end{split}
\]
This gives the conclusion.
\end{proof}

\begin{proof}[Proof of Property \eqref{item1245}]
Let \(x \in \R^m \setminus T^{(i-1)^*}\).
If \(\Psi_i\) coincides with the identity map in a neighborhood of \(\Phi_i(x)\), then \(D^j \Phi_{i-1}(x)=D^j \Phi_{i}(x)\) and the conclusion follows from the induction hypothesis and the fact that \(T^{(i-1)^*}\supset T^{i^*}\). 

If \(\Psi_i\) does not coincide with the identity map in a neighborhood of \(\Phi_i(x)\), then there exists \(\sigma^i \in \mathcal{U}^{i}\) such that 
\[
\Phi_{i}(x)\in (\sigma^i + Q^{m}_{\tau_{i-1}\eta}) \setminus (\partial\sigma^i + Q^{m}_{\tau_{i-1}\eta})
\] 
and \(\Phi_{i-1}(x)=\Theta_{\sigma_i}(\Phi_{i}(x))\). By the chain rule for higher order derivatives,
\[
\abs{D^j\Phi_{i-1}(x)} \le
C_1 \sum_{r=1}^j \sum_{\substack{1 \le t_1 \le \dotsc \le t_r\\ t_1 + \dotsb + t_r = j}}\abs{D^r \Theta_{\sigma_i}(\Phi_i(x))}\, \abs{D^{t_1} \Phi_i(x)} \dotsm \abs{D^{t_r} \Phi_i(x)}.
\]
By construction of \(\Theta_i\) (see Proposition~\ref{lemmaThickeningFaceFromPrimalSkeleton}, property \((\ref{itemthickeninglemma3})\)),
we have for any \(y = (y',y'')\in (Q_{(1 - \tau_{i-1})\eta}^i \times Q^{m-i}_{\tau_{i-1}\eta}) \setminus (\{0^{i}\}\times Q^{m-i}_{\tau_{i-1}\eta})\),
\[
\abs{D^{r}\Theta_{i}(y)}\le \frac{C_2 \eta}{\abs{y'}^r}.
\]
This implies 
\[
\abs{D^r\Theta_{\sigma^i}(\Phi_{i}(x))} \le \frac{C_2 \eta}{(\dist \big(\Phi_{i}(x), T^{(i-1)^{\ast}} )\big)^r}.
\]
By the induction hypothesis, for every \(1 \le t_1 \le \ldots \le t_r\) such that \(t_1 + \dots + t_r = j\), 
\begin{multline*}
\abs{D^{t_1} \Phi_i(x)} \dotsm \abs{D^{t_r}\Phi_i(x)}\\
 \le C_3 \frac{\eta}{\big(\dist (x, T^{i^\ast})\big)^{t_1}} \dotsm \frac{\eta}{\big(\dist (x, T^{i^\ast})\big)^{t_r}} = C_3 \frac{\eta^r}{\big(\dist (x, T^{i^\ast})\big)^{j}}.
\end{multline*}
Thus,
\[
\abs{D^j \Phi_{i-1}(x)}\le C_4 \sum_{r=1}^{j} \frac{\eta^{r + 1}}{\big(\dist (\Phi_{i}(x), T^{(i-1)^{\ast}} )\big)^r \big(\dist (x, T^{i^\ast})\big)^j}.
\]
We recall that by property \eqref{item1245},
\[
\dist(\Phi_{i}(x), T^{(i-1)^{*}}) \dist(x, T^{i^*}) = \dist (x, T^{(i-1)^*}) \dist (\Phi_{i}(x), T^{i^*}).
\]
Since \(\Phi_{i}(x)\in (\sigma^i + Q^{m}_{\tau_{i-1}\eta}) \setminus (\partial\sigma^i + Q^{m}_{\tau_{i-1}\eta})\),
\[
\dist (\Phi_{i}(x), T^{i^*}) \ge (1-\tau_{i-1})\eta \ge (1-\rho)\eta.
\]
Thus,
\begin{multline*}
\big(\dist (\Phi_{i}(x), T^{(i-1)^{\ast}} )\big)^r \big(\dist (x, T^{i^\ast})\big)^j \\
\begin{aligned}
& = 
\big(\dist (x, T^{(i-1)^*}) \dist (\Phi_{i}(x), T^{i^*})\big)^r  \big(\dist (x, T^{i^\ast})\big)^{j - r}\\
& \ge 
\big(\dist{(x, T^{(i-1)^*})}\big)^r \big((1-\rho)\eta)^r \big(\dist (x, T^{i^\ast})\big)^{j - r}.
\end{aligned}
\end{multline*}
Since \(T^{i^*} \subset T^{(i-1)^*}\), we conclude that
\[
\abs{D^j \Phi_{i-1}(x)}\le C_5 \frac{\eta}{\bigl(\dist (x,T^{(i-1)^{\ast}})\bigr)^j}.
\qedhere
\]
\end{proof}

\begin{proof}[Proof of Property \eqref{item1246}]
Let \(j \in \N_*\) and let \(x \in \R^m \setminus T^{(i-1)^*}\).
If \(\Psi_i\) coincides with the identity map in a in a neighborhood of \(\Phi_i(x)\), then \(D^j \Phi_{i-1}(x)=D^j \Phi_{i}(x)\) and \(\jac{\Phi_{i-1}}(x) = \jac{\Phi_{i}}(x)\). The conclusion then follows from the induction hypothesis. 

Assume that \(\Psi_i\) does not coincides with the identity map in a neighborhood of \(\Phi_i(x)\). Let \(0 < \beta < i\) and \(r \in \{0, \dots, j\}\). By induction hypothesis,  if \(1 \le t_1 \le \ldots \le t_r\) and \(t_1 + \dotsb + t_r = j \), then
\[
\abs{D^{t_1} \Phi_i(x)} \dotsm \abs{D^{t_r}\Phi_i(x)} \le C_1 \frac{(\jac{\Phi_i}(x))^\frac{t_1}{\beta}}{\eta^{t_1 - 1}} \dotsm \frac{(\jac{\Phi_i}(x))^\frac{t_r}{\beta}}{{\eta^{t_r - 1}}} = C_1 \frac{(\jac{\Phi_i}(x))^\frac{j}{\beta}}{\eta^{j-r}}.
\]
Let \(\sigma^i \in \mathcal{U}^{i}\) be such that 
\[
\Phi_{i}(x)\in (\sigma^i + Q^{m}_{\tau_{i-1}\eta}) \setminus (\partial\sigma^i + Q^{m}_{\tau_{i-1}\eta})
\] 
and \(\Phi_{i-1}(x)=\Theta_{\sigma_i}\circ \Phi_{i}(x)\). 
By construction of \(\Theta_i\) (see Proposition~\ref{lemmaThickeningFaceFromPrimalSkeleton}, property \((\ref{itemthickeninglemma4})\)),  we have for any \(y\in (Q_{(1 - \tau_{i-1})\eta}^i \times Q^{m-i}_{\tau_{i-1}\eta}) \setminus (\{0^{i}\}\times Q^{m-i}_{\tau_{i-1}\eta})\),
\[
\eta^{r-1}\abs{D^{r}\Theta_{i}(y)}\le C_2(\jac{\Theta_i}(y))^{\frac{r}{\beta \frac{r}{j}}} = C_2 (\jac{\Theta_i}(y))^\frac{j}{\beta}.
\]
Thus, 
\[{}
\begin{split}
\abs{D^{r}\Theta_{\sigma^i}(\Phi_i(x))}\, \abs{D^{t_1} \Phi_i(x)} \dotsm \abs{D^{t_r}\Phi_i(x)}
& \le C_3 \frac{(\jac{\Theta_{\sigma^i}}(\Phi_i(x)))^{\frac{j}{\beta}}}{\eta^{r-1}} \frac{(\jac{\Phi_i}(x))^{\frac{j}{\beta}}}{\eta^{j-r}}\\
& = \frac{C_3}{\eta^{j-1}}\big(\jac{\Phi_{i-1}}(x) \big)^{\frac{j}{\beta}}.
\end{split}
\]
Therefore, by the chain rule for higher order derivatives,
\[
\abs{D^j\Phi_{i-1}(x)} 
\le \frac{C_4}{\eta^{j-1}} \big(\jac{\Phi_{i-1}}(x) \big)^{\frac{j}{\beta}}.
\]
This gives the conclusion.
\end{proof}

By downward induction, we conclude that properties \eqref{item1240}--\eqref{item1246} hold for every \(i \in \{\ell, \dots, m\}\). In particular, \(\Phi_\ell\) satisfies properties $(i)$--\((v)\) of Proposition~\ref{propositionthickeningfromaskeleton}.
\end{proof}

We establish a couple of lemmas in order to prove Proposition~\ref{lemmaThickeningFaceFromPrimalSkeleton}:

\begin{lemma}
\label{lemmaThickeningAroundPrimalSqueleton}
Let \(\ell\in \{1,\ldots, m\}\), let \(\eta > 0\), let \(0 < \underline{\rho} < \rho < \overline{\rho} < 1\) and \(0 < \kappa  < 1 - \overline{\rho}\).
There exists a smooth function \(\lambda : \R^m \to [1,\infty)\) such that if \(\Phi : \R^m   \to \R^m\) is defined for \(x = (x', x'') \in \R^\ell \times \R^{m - \ell}  \) by
\[
\Phi(x) = (\lambda(x)x', x''),
\]
then
\begin{enumerate}[$(i)$]
\item \(\Phi\) is a diffeomorphism,
\item \(\Supp{\Phi} \subset Q^\ell_{(1-\rho)\eta} \times Q^{m-\ell}_{\rho \eta}\),
\item 
\(
 \Phi\bigl( (Q^\ell_\eta \setminus Q^\ell_{\kappa \eta}) \times Q^{m-\ell}_{\underline{\rho} \eta }\bigr) \subset (Q^\ell_\eta \setminus Q^\ell_{(1-\overline{\rho})\eta}) \times Q^{m-\ell}_{\underline{\rho} \eta },
\)
\item 
\label{item1454}
for every \(j \in \N_*\) and for every \(x \in \R^m\),
\[
\eta^{j-1}\abs{D^j \Phi(x)} \le C,
\]
for some constant \(C > 0\) depending on \(j\), \(m\), \(\underline{\rho}\), \(\rho\), \(\overline{\rho}\) and \(\kappa\),
\item 
\label{item1455}
for every \(j \in \N_*\) and for every \(x \in \R^m\),
\[
C' \le \jac{\Phi}(x) \le C'',
\]
for some constants \(C', C'' > 0\) depending on \(m\), \(\underline{\rho}\), \(\rho\), \(\overline{\rho}\) and \(\kappa\).
\end{enumerate}
\end{lemma}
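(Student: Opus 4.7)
The plan is to construct $\lambda$ explicitly as a smooth radial expansion in $x'$, localized by a cutoff in $x''$. By rescaling $x \mapsto \eta x$ I reduce to $\eta = 1$. First I fix a smooth strictly increasing profile $g : [0, 1] \to [0, 1]$ satisfying $g(r) = r$ on $[0, \kappa/2] \cup [1 - \rho, 1]$, $g(\kappa) \ge 1 - \overline{\rho}$, $g(r) \le 1$ on $[0, 1]$ and $g(r) \ge r$ throughout; such a $g$ exists since $\kappa/2 < \kappa < 1 - \overline{\rho} < 1 - \rho$. The function $h(r) = g(r)/r - 1$ (with $h(0) = 0$) is then smooth on $[0, 1]$ and vanishes identically near $r = 0$ and on $[1 - \rho, 1]$. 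Taking a smoothing $N : \R^\ell \to [0, +\infty)$ of $\abs{\cdot}_\infty$ that is $1$-homogeneous and smooth away from the origin, together with a smooth cutoff $\chi : \R^{m - \ell} \to [0, 1]$ with $\chi = 1$ on $Q^{m - \ell}_{\underline{\rho}}$ and $\chi = 0$ outside $Q^{m - \ell}_{\rho}$, I set
\[
\lambda(x) = 1 + \chi(x'')\, h(N(x')).
\]
Since $h$ vanishes in a neighborhood of $0$, the composition $h \circ N$ is smooth on $\R^\ell$, so $\lambda$ is smooth on $\R^m$; the lower bound $\lambda \ge 1$ follows from $g \ge \mathrm{id}$.

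The support property $(ii)$ holds because $\lambda \equiv 1$ whenever $\chi(x'') = 0$ or $N(x') \ge 1 - \rho$. Property $(iii)$ reduces, on the slice $\chi(x'') = 1$, to $g(N(x')) \in [1 - \overline{\rho}, 1]$ when $N(x') \in [\kappa, 1]$, which is exactly the defining property of $g$. The higher-derivative estimate $(iv)$ follows from the chain rule, since $\chi$, $h$ and $N$ are fixed smooth functions on bounded sets; the factor $\eta^{j - 1}$ is restored upon undoing the rescaling. For the Jacobian bounds $(v)$, the block-triangular structure of $\Phi$ yields
\[
\jac\Phi(x) = \lambda(x)^{\ell - 1}\bigl(\lambda(x) + x' \cdot \nabla_{x'}\lambda(x)\bigr),
\]
and Euler's identity $x' \cdot \nabla N(x') = N(x')$---valid on the geometric support of $h \circ N$, where $N$ is $1$-homogeneous---simplifies the second factor to $1 - \chi(x'') + \chi(x'')\, g'(N(x'))$, which stays strictly between two positive constants. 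The diffeomorphism property $(i)$ then follows from the positive lower bound on $\jac\Phi$ combined with $\Phi$ being the identity outside a compact set (e.g.\ by the global inverse function theorem of Hadamard).

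The main obstacle is reconciling the radial character of $N$ with the cube-shaped inclusions appearing in $(ii)$ and $(iii)$: strict comparability $N \asymp \abs{\cdot}_\infty$ alone is not enough, as it would distort the thresholds $\kappa$, $1 - \overline{\rho}$, $1 - \rho$, $\underline{\rho}$, $\rho$ by a fixed multiplicative factor. I expect this to be handled by taking $N$ to coincide with $\abs{\cdot}_\infty$ outside an arbitrarily thin neighborhood of the singular set of $\abs{\cdot}_\infty$, so that the smoothing error is confined to regions where $h \circ N$ has already vanished or is already constant. With such a choice, all thresholds are realized exactly on cubes and the plan goes through.
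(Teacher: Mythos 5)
Your route is genuinely different from the paper's. Instead of describing the expansion \(\Phi\) directly (which immediately forces a smooth substitute \(N\) for \(\abs{\cdot}_\infty\)), the paper writes down the \emph{inverse} as a contraction \(\Psi(x)=\bigl((1-\alpha\varphi(x))x',\,x''\bigr)\), where \(\varphi\) is a product of one-variable cutoffs \(\psi(x_1)\dotsm\psi(x_\ell)\,\theta(x_{\ell+1})\dotsm\theta(x_m)\), checks \(\jac\Psi>0\) via a one-line computation, and sets \(\Phi=\Psi^{-1}\). Because a cube is being shrunk onto a cube, the product structure yields the inclusions of \((ii)\) and \((iii)\) exactly, and there is no smooth norm to invent and hence no tension between round level sets and cube thresholds. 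This is precisely the difficulty your last paragraph is wrestling with, and the inversion trick makes it disappear.

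There is a genuine gap in your handling of \(N\). You correctly observe that \(N\asymp\abs{\cdot}_\infty\) alone would spoil the thresholds, and you propose to take \(N=\abs{\cdot}_\infty\) outside an arbitrarily thin neighbourhood of the singular set \(\Sigma\) of \(\abs{\cdot}_\infty\), ``so that the smoothing error is confined to regions where \(h\circ N\) has already vanished or is already constant.'' That claim is false: \(\Sigma\) is a cone through the origin, so any conical neighbourhood of it meets the whole annulus \(\{\kappa/2<\abs{x'}_\infty<1-\rho\}\) where \(h\) is active and non-constant, and along those rays \(N\neq\abs{\cdot}_\infty\) exactly where it matters. In particular, on the slice \(\chi\equiv 1\) one only gets \(\abs{\lambda(x)x'}_\infty=g(N(x'))\,\abs{x'}_\infty/N(x')\), not \(g(\abs{x'}_\infty)\), so ``all thresholds are realized exactly on cubes'' fails and the verification of \((iii)\) as you sketched it breaks down. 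The construction can be rescued, but not for the stated reason: build in a margin (take \(g(\kappa)>1-\overline{\rho}\) strictly), pick \(N\) with \(\abs{\cdot}_\infty\le N\le(1+\delta)\abs{\cdot}_\infty\) for \(\delta\) small relative to that margin, and then estimate the multiplicative error in both \((ii)\) and \((iii)\). Two smaller holes: you must impose \(g'>0\) on \([0,1]\) (a smooth strictly increasing function may have \(g'=0\) at a point, which would destroy the lower Jacobian bound of \((v)\) and hence \((i)\)), and \(h\) must be extended by \(0\) on \([1,\infty)\) since \(N\) may exceed \(1\). All of this quantitative bookkeeping is exactly what the paper avoids by building \(\Psi\) and inverting.
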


\begin{proof} By scaling, we may assume that \(\eta=1\).
Let \(\psi : \R \to [0, 1]\) be a smooth function such that 
\begin{itemize}[\(-\)]
\item \(\psi\) is nonincreasing on \(\R_+\) and nondecreasing on \(\R_-\),
\item for \(\abs{t} \le 1 - \overline{\rho}\), \(\psi(t)=1\), 
\item for \(\abs{t} \ge 1-\rho\), \(\psi(t)=0\).
\end{itemize}
Let \(\theta : \R \to [0, 1]\) be a smooth function such that
\begin{itemize}[\(-\)]
\item for \(\abs{t}\le \underline{\rho}\), \(\theta(t)=1\),
\item for \(\abs{t} \ge \rho\), \(\theta(t)=0\).
\end{itemize}
Let \(\varphi : \R^m \to \R\) be the function defined for \(x=(x_1, \dotsc, x_m) \in \R^m\) by
\[
\textstyle \varphi(x) =  \prod\limits_{i=1}^\ell \psi(x_i) \prod\limits_{i=\ell+1}^m \theta (x_i).
\]
Thus, 
\begin{itemize}[\(-\)]
\item for every \(x \in \R^m \setminus (Q^\ell_{1-\rho} \times Q^{m - \ell}_{\rho})\), \(\varphi(x) = 0\),
\item for every \(x \in Q^\ell_{1-\overline\rho} \times Q^{m - \ell}_{\underline\rho}\),  \(\varphi(x) = 1\).
\end{itemize}

We shall define the map \(\Phi\) in terms of its inverse \(\Psi\): let \(\Psi : \R^m \to \R^m\) be the function defined for \(x = (x', x'') \in \R^\ell \times \R^{m - \ell}\) by
\[
\Psi(x)= \big((1 - \alpha \varphi(x))x', x''\big),
\]
where \(\alpha \in \R\). In particular,
\begin{itemize}[\(-\)]
\item for every \(x \in \R^m \setminus (Q^\ell_{1-\rho} \times Q^{m - \ell}_{\rho})\), \(\Psi(x) = x\),
\item for every \(x = (x', x'') \in Q^\ell_{1-\overline\rho} \times Q^{m - \ell}_{\underline\rho}\), \(\Psi(x) = ((1-\alpha) x', x'')\).
\end{itemize}
In view of this second property, taking \(\alpha=1-\frac{\kappa}{1-\overline{\rho}}\), we deduce that \(\Psi\) is a bijection between \(Q^\ell_{1-\overline\rho} \times Q^{m-\ell}_{\underline\rho}\) and \(Q^\ell_{\kappa} \times Q^{m-\ell}_{\underline\rho}\).

We now prove that \(\Psi\) is injective. If \(x, y \in  \R^\ell \times \R^{m - \ell}\) satisfy \(\Psi(x) = \Psi(y)\), then \(y'' = x''\) and \(y' = t x'\)  for some \(t > 0\). Since \(\alpha \in (0, 1)\), the function 
\[
g : s \in [0, \infty) \longmapsto s(1-\alpha \varphi(sx', x'')) 
\]
is the product of an increasing function with a nondecreasing positive function. Thus, \(g\) is increasing, whence \(\Psi\) is injective. Since \(g(0)= 0\) and \(\lim\limits_{t \to +\infty}{g(t)} = +\infty\), by the Intermediate value theorem, \(g([0, \infty)) = [0, \infty)\). Thus, \(\Psi\) is surjective. Therefore, the map \(\Psi\) is a bijection. 

We claim that for every \(x \in \R^m\), \(D\Psi(x)\) is invertible. Indeed, for every \(x = (x', x'') \in \R^\ell \times \R^{m - \ell}\) and for every \(v = (v', v'') \in \R^\ell \times \R^{m - \ell}\),
\[
D\Psi(x)[v] = \bigl((1 - \alpha \varphi(x))v' - \alpha D\varphi(x)[v] x', v'' \bigr).
\]
The Jacobian of \(\Psi\) can be computed as the determinant of a nilpotent perturbation of a diagonal linear map to be
\[
\jac{\Psi(x)} = (1 - \alpha \varphi(x))^{\ell-1} \bigl(1 - \alpha \varphi(x) - \alpha D\varphi(x)[(x', 0)] \bigr).
\]
Since \(\psi\) is nonincreasing on \(\R_+\) and nondecreasing on \(\R_-\), \(D\varphi(x)[(x', 0)] \le 0\). Thus,
\[
\jac{\Psi(x)} \ge (1 - \alpha \varphi(x))^{\ell} \ge (1 - \alpha)^\ell > 0.
\]
The map \(\Phi = \Psi^{-1}\) satisfies all the desired properties. 
\end{proof}

\begin{lemma}
\label{lemmaThickeningAroundDualSqueleton}
Let \(\ell \in \{1,\dotsc, m\}\), \(\eta > 0\), \(0<\underline{\rho}<\rho<\overline{\rho}<1\) and \(T=\{0^\ell\}\times Q^{m-\ell}_{\rho\eta}.\) 
There exists a smooth function \(\lambda : \R^m \setminus T \to [1,\infty)\) such that if \(\Phi : \R^m \setminus T \to \R^m\) is defined for \(x = (x', x'') \in (\R^\ell \times \R^{m - \ell})\setminus T\) by
\[
\Phi(x) = (\lambda(x)x', x''),
\]
then
\begin{enumerate}[$(i)$]
\item \(\Phi\) is injective,
\label{item03011}
\item \(\Supp{\Phi}\subset B^\ell_{(1-\rho)\eta} \times Q^{m-\ell}_{\rho\eta}\),
\label{item03012}
\item \(\Phi\big((B^{\ell}_{(1-\rho)\eta}\times Q^{m-\ell}_{\underline\rho\eta})\setminus T\big)\subset (B^{\ell}_{(1-\rho)\eta}\setminus B^{\ell}_{(1-\overline\rho) \eta}) \times Q^{m-\ell}_{\underline{\rho}\eta} \),
\label{item03013}
\item for every \(j\in \N_{*}\) and for every 
\(x = (x', x'') \in (B^\ell_{(1-\rho)\eta} \times Q^{m-\ell}_{\rho\eta}) \setminus T\),
\begin{equation*}
\abs{D^j\Phi(x)}\leq \frac{C\eta}{\abs{x'}^j},
\end{equation*}
for some constant \(C > 0\) depending on \(j\), \(m\), \(\underline\rho\), \(\rho\) and \(\overline\rho\),
\label{item03014}
\item for every \(0 < \beta < \ell\), for every \(j \in \N_*\) and for every 
\( x \in \R^m \setminus T\), 
\[
\eta^{j-1}\abs{D^j \Phi(x)} \le  C' \bigl(\jac{\Phi}(x)\bigr)^\frac{j}{\beta},
\]
for some constant \(C' > 0\) depending on \(\beta\), \(j\), \(m\), \(\underline{\rho}\), \(\rho\) and \(\overline{\rho}\).
\label{item03015}
\end{enumerate}
\end{lemma}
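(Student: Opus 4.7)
The plan is to imitate the construction from Lemma~\ref{lemmaThickeningAroundPrimalSqueleton} but to replace the homogeneous radial dilation by a profile that pushes the origin out to a positive radius, so that $T$ becomes a genuine singularity. Fix an auxiliary $\delta \in (0, \overline{\rho} - \rho)$ and build a smooth strictly increasing function $h : (0, \infty) \to (0, \infty)$ such that $h(r) = r$ for $r \ge (1-\rho)\eta$ and $\lim_{r \to 0^+} h(r) = (1 - \overline{\rho} + \delta)\eta$. Picking a cutoff $\theta \in C^\infty_c(Q^{m-\ell}_{\rho\eta})$ with $0 \le \theta \le 1$ and $\theta = 1$ on $Q^{m-\ell}_{\underline{\rho}\eta}$, I define
\[
\lambda(x) = 1 + \theta(x'')\,\frac{h(\abs{x'}) - \abs{x'}}{\abs{x'}}, \qquad \Phi(x) = (\lambda(x)\, x', x'').
\]

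For each fixed $x''$, the radial push $r \mapsto r\,\lambda(r, x'') = (1-\theta(x''))\, r + \theta(x'')\, h(r)$ is a convex combination of two strictly increasing functions, hence strictly increasing, which yields injectivity (i). The support statement (ii) follows from the cutoffs in $x'$ and $x''$, while (iii) holds because on $Q^{m-\ell}_{\underline{\rho}\eta}$ one has $\theta = 1$ and $\abs{\Phi_1(x)} = h(\abs{x'}) \in [(1-\overline{\rho}+\delta)\eta, (1-\rho)\eta)$. Property (iv) follows from an easy induction on $j$: near $T$, $\lambda \sim \eta/\abs{x'}$, and each additional derivative of $\lambda\, x'$ introduces an extra factor $1/\abs{x'}$; away from $T$ the derivatives are controlled by the $C^\infty$ bounds of $h$ and $\theta$.

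The delicate part is (v). The block structure of $D\Phi$ together with the identity $\det(\alpha I + v \otimes w) = \alpha^{\ell-1}(\alpha + v \cdot w)$ gives
\[
\jac\Phi(x) = \lambda(x)^{\ell-1}\bigl[(1-\theta(x'')) + \theta(x'')\,h'(\abs{x'})\bigr],
\]
so on the region $\theta(x'') = 1$ one has $\jac\Phi \sim (\eta/\abs{x'})^{\ell-1}\, h'(\abs{x'})$ near $T$. Combining with (iv), the target estimate $\eta^{j-1}\abs{D^j\Phi} \le C'(\jac\Phi)^{j/\beta}$ reduces, after matching scales in $\eta/\abs{x'}$, to the single scalar inequality
\[
\left(\frac{\eta}{\abs{x'}}\right)^{\beta - \ell + 1} \le C'(\beta)\, h'(\abs{x'}).
\]
A bounded $h'$ would only accommodate $\beta \le \ell - 1$, so the full range $\beta < \ell$ forces $h'(r)$ to grow essentially at the borderline rate $\eta/r$.

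The main obstacle is therefore the choice of the profile $h$: its derivative must grow slightly faster than any power $(\eta/r)^\alpha$ with $\alpha < 1$, while remaining integrable near $r = 0$ so that $h$ stays bounded and condition (iii) survives. A logarithmic slowing such as $h'(r) \sim (\eta/r)/\log^{a}(\eta/r)$ with $a > 1$ near $r = 0$ does the job: integrability is automatic, and for every fixed $\beta < \ell$ the ratio $h'(r)/(\eta/r)^{\beta - \ell + 1}$ is of order $(\eta/r)^{\ell - \beta}/\log^{a}(\eta/r)$, which tends to infinity as $\abs{x'} \to 0$ since a polynomial factor beats any power of the logarithm. This yields (v) with a constant $C'(\beta, j)$ that diverges as $\beta \uparrow \ell$, in line with the form of the statement.
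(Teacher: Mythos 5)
Your overall strategy is right: replace the radial dilation of Lemma~\ref{lemmaThickeningAroundPrimalSqueleton} by a one-variable ``push-off'' profile $h$ with a borderline-logarithmic correction. In fact your choice $h'(r)\sim (\eta/r)/\log^a(\eta/r)$ is essentially the paper's: there $\varphi(s) = \frac{(1-\overline\rho)\eta}{s}\bigl(1 + b/\ln(\eta/s)\bigr)$ near $0$, so $h(s)=s\varphi(s)$ has $h'(s)\sim \eta/\bigl(s\ln^2(\eta/s)\bigr)$, i.e.\ $a=2$. The checks for $(i)$--$(iv)$ are sound in outline.

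The gap is in $(v)$, and it comes from how you insert the $x''$-cutoff. You set $\lambda(x)=1+\theta(x'')\,\frac{h(|x'|)-|x'|}{|x'|}$, a \emph{linear interpolation} between the identity ($\theta=0$) and the push-off ($\theta=1$). You only verify $(v)$ on the region $\theta(x'')=1$, but the transition zone $0<\theta(x'')<1$ still meets $T$ (those $x''$ lie in $Q^{m-\ell}_{\rho\eta}$), and there the estimate fails for $\ell-1<\beta<\ell$. Write $\epsilon=\theta(x'')$, $r=|x'|$; then $\lambda=(1-\epsilon)+\epsilon\,h(r)/r$ and
\[
\jac\Phi=\Bigl((1-\epsilon)+\epsilon\tfrac{h(r)}{r}\Bigr)^{\ell-1}\Bigl((1-\epsilon)+\epsilon h'(r)\Bigr).
\]
Take $\epsilon=1/h'(r)$, which lies in $(0,1)$ for small $r$ and is attained by some $x''$ since $\theta$ is a smooth cutoff. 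Then $\lambda\sim \epsilon\,h(r)/r = \frac{h(r)}{r\,h'(r)}\sim \log^a(\eta/r)\to\infty$, while $(1-\epsilon)+\epsilon h'(r)\sim 2$; hence $\jac\Phi\sim\log^{a(\ell-1)}(\eta/r)$ and $|D\Phi|\gtrsim\lambda\sim\log^{a}(\eta/r)$. For $(v)$ with $j=1$ you would need $\log^{a}\lesssim\log^{a(\ell-1)/\beta}$, which forces $\beta\le\ell-1$. So no uniform constant $C'$ works on the full range $\beta<\ell$. (For $\beta\le\ell-1$ your construction is fine; it is precisely the borderline window that breaks.)

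The paper sidesteps this by folding the cutoff \emph{inside} the radial argument: $\lambda(x)=\varphi(\zeta(x))$ with $\zeta(x)=\sqrt{|x'|^2+\theta(x'')^2}$, where now $\theta$ is the complementary cutoff (vanishing near the dual skeleton, equal to $1$ far from it). In the transition zone $\zeta\ge\theta(x'')>0$ is bounded away from $0$, so $\varphi(\zeta)$ and all its derivatives stay bounded, and both $|D^j\Phi|$ and $\jac\Phi$ are estimated uniformly by negative powers of the \emph{single} scalar $\zeta$; the inequality $|D^j\Phi|\lesssim\zeta^{-j}\lesssim(\jac\Phi)^{j/\beta}$ then has a constant that does not depend on where in the transition zone you are. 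Your $\lambda$ decouples $x'$ and $x''$, and there is no single quantity that simultaneously controls the tangential eigenvalue $\lambda$ and the radial factor $(1-\epsilon)+\epsilon h'(r)$; that is exactly what the counterexample exploits. To repair the argument you should switch to a $\sqrt{|x'|^2+\theta(x'')^2}$-type argument for the profile rather than multiplying by $\theta$.
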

\begin{proof}
By scaling, we may assume that \(\eta = 1\). Given \(b > 0\), let \(\varphi : (0, \infty) \to [1, \infty)\) be a smooth function such that
\begin{itemize}[\(-\)]
\item for \(0 < s \le 1-\overline\rho\), \(\varphi(s)= \dfrac{1-\overline\rho}{s}\Bigl(1+\frac{b}{\ln \frac{1}{s}}\Bigr)\),
\item for \(s \ge 1-\rho\), \(\varphi(s)=1\),
\item the function \(s \in (0, \infty) \mapsto s\varphi(s)\) is increasing.
\end{itemize}
This is possible for any \(b > 0\) such that 
\[
(1-\overline\rho)\Bigl(1+\frac{b}{\ln \frac{1}{1-\overline\rho}}\Bigr) < 1 - \rho.
\]

Let \(\theta : \R^{m-\ell} \to [0, 1]\) be a smooth function such that 
\begin{itemize}[\(-\)]
\item for \(y \in Q^{m-\ell}_{\underline{\rho}}\),  \(\theta(y) = 0\),
\item for \(y \in \R^{m-\ell} \setminus Q^{m-\ell}_{\rho}\), \(\theta(y)=1\). 
\end{itemize}
We now introduce  for \(x=(x', x'') \in \R^\ell\times \R^{m-\ell}\),  
\[
\zeta(x) = \sqrt{\abs{x'}^2+\theta\bigl(x''\bigr)^2}.
\]

Let \(\lambda : \R^m \setminus T \to \R\) be the function defined for \(x=(x', x'') \in \R^m \setminus T\) by  
\[
\lambda(x)= \varphi(\zeta(x)).
\]
Since \(\zeta \ne 0\) in \(\R^m\setminus T\), the function \(\lambda\) is well-defined and smooth. In addition, \(\lambda \ge 1\).

We now check that the map \(\Phi\) defined in the statement satisfies all the required properties.

\begin{proof}[Proof of Property~\((\ref{item03011})\)]
In order to check that \(\Phi\) is injective, we first observe that if \(x=(x', x''), y=(y', y'') \in B^\ell_1 \times Q^{m-\ell}_{\rho}\) and \(\Phi(x)=\Phi(y)\), then \(x''=y''\), and there exists \(t > 0\) such that \(y' = t x'\). The conclusion follows from the fact that the function
\[
h: s \in [0, \infty) \longmapsto s \varphi\bigl(\sqrt{s^2+\theta(x'')^2}\bigr)
\]
is increasing. 
\end{proof}

\begin{proof}[Proof of Property~\((\ref{item03012})\)]
For every \(x = (x', x'') \in (\R^\ell\times \R^{m-\ell})\setminus T\), if \(x' \not\in B^\ell_{1-\rho}\) or if \(x'' \not\in Q_\rho^{m - \ell}\), then \(\zeta(x) \ge 1-\rho\). Thus, \(\lambda(x) = \varphi(\zeta(x)) = 1\) and \(\Phi(x) = x\). We then have \(\Supp{\Phi}\subset B^\ell_{1-\rho} \times Q^{m-\ell}_{\rho}\).
\end{proof}

\begin{proof}[Proof of Property~\((\ref{item03013})\)]
We first observe that since the function \(s \in (0, \infty) \mapsto s\varphi(s)\) is increasing and \(\lim\limits_{s \to 0}{s\varphi(s)} = 1 - \overline{\rho}\), for every \(s > 0\),
\[
s \varphi(s) \ge 1 - \overline{\rho}.
\]
Since for every \(x = (x', x'') \in (B^{\ell}_{1-\rho}\times Q^{m-\ell}_{\underline\rho})\setminus T\), we have \(\zeta(x) = \abs{x'}\), we deduce that 
\[
\abs{\lambda(x) x'} = \varphi(\abs{x'})\abs{x'} \ge 1-\overline\rho.
\]
On the other hand, since the function \(h\) defined above is increasing,
\[
\abs{\lambda(x) x'} = h(\abs{x'}) \le h(1-\rho) = 1-\rho.
\]
We conclude that \(\lambda(x)x' \in B^\ell_{1-\rho} \setminus B^\ell_{1-\overline\rho}\).
\end{proof}

\begin{proof}[Proof of Property~\((\ref{item03014})\)]
By the chain rule,
\[
\abs{D^{j}\lambda(x)}
\le C_1 \sum_{i=1}^{j} \sum_{\substack{1 \le t_1 \le \dotsc \le t_i\\ t_1 + \dotsb + t_i = j}} \abs{\varphi^{(i)}(\zeta(x))}\, \abs{D^{t_1} \zeta(x)} \dotsm \abs{D^{t_i} \zeta(x)}.
\]
For every \(i \in \N_{\ast}\) and for every \(s > 0\),
\begin{equation*}
\abs{\varphi^{(i)}(s)} \leq \frac{C_2}{s^{i+1}}
\end{equation*}
and for every \(x \in (B^{\ell}_1\times \R^{m-\ell})\setminus T\),
\begin{equation*}
\abs{D^i \zeta(x)} \leq \frac{C_3}{\zeta(x)^{i-1}}.
\end{equation*}
Thus, for every \(1 \le t_1 \le \ldots \le t_i\) such that \(t_1 + \dotsb + t_i = j\),
\[
\abs{D^{t_1} \zeta(x)} \dotsm \abs{D^{t_i} \zeta(x)} \le  \frac{C_4}{\zeta(x)^{t_1-1} \dotsm \zeta(x)^{t_i-1}} = \frac{C_4}{\zeta(x)^{j - i}}.
\]
By the chain rule,
\[
\begin{split}
\abs{D^{j} \lambda(x)}
& \le C_5
\sum_{i=1}^{j} \frac{1}{\zeta(x)^{i+1}} \frac{1}{\zeta(x)^{j - i}} =  \frac{C_5 j}{\zeta(x)^{j+1}}.
\end{split}
\]
Hence, by the Leibniz rule, for any \(x\in (B^{\ell}_{1}\times \R^{m-\ell})\setminus T\), 
\begin{equation}
\label{equationEstimationDeriveeThickening}
\abs{D^j\Phi(x)}\le\frac{C_6}{\zeta(x)^j}.
\end{equation}
Since \(\zeta(x) \ge \abs{x'}\), the conclusion follows.
\end{proof}

\begin{proof}[Proof of Property~\((\ref{item03015})\)]
For every \( x=(x', x'') \in (\R^\ell\times \R^{m-\ell})\setminus T\) and \(v=(v', v'') \in \R^\ell \times \R^{m-\ell}\),
\[
D\Phi(x)[v]=\Bigl(\varphi\bigl(\zeta(x)\bigr)v'
+\varphi^{(1)}\bigl(\zeta(x)\bigr)\\
\frac{x' \cdot v'+\theta(x'') D\theta(x'')[v'']}{\zeta(x)} x', v''\Bigr).
\]
The Jacobian can be computed as the determinant of a nilpotent perturbation of a diagonal linear map to be
\[ 
\begin{split}
\jac \Phi(x)
& =\varphi(\zeta(x))^{\ell-1}\Bigl(\varphi(\zeta(x))+\varphi^{(1)}(\zeta(x))\frac{\abs{x'}^2}{\zeta(x)} \Bigr)\\
& =\varphi(\zeta(x))^{\ell-1}\Bigl(\varphi(\zeta(x)) \Bigl(1 - \frac{\abs{x'}^2}{\zeta(x)^2}\Bigr) + \big(\varphi^{(1)}(\zeta(x))\zeta(x) + \varphi(\zeta(x)) \big)\frac{\abs{x'}^2}{\zeta(x)^2} \Bigr).
\end{split}
\]
Since for every \(s > 0\), 
\[
s \varphi^{(1)}(s) + \varphi(s) = (s\varphi(s))^{(1)} \ge 0
\]
and since there exists \(c_1 > 0\) such that for every \(s > 0\), 
\[
\varphi(s) \ge \frac{c_1}{s},
\]
we have
\[
\jac \Phi(x) \ge \varphi(\zeta(x))^{\ell} \Bigl(1 - \frac{\abs{x'}^2}{\zeta(x)^2}\Bigr) \ge  \frac{c_2} {\zeta(x)^\ell} \Bigl(1 - \frac{\abs{x'}^2}{\zeta(x)^2}\Bigr).
\]
If \(\abs{x'} \le \theta(x'')\), then \(\zeta(x) \ge \sqrt{2}\abs{x'}\) and we get
\[
\jac \Phi(x) \ge  \frac{c_3}{\zeta(x)^\ell}.
\]
On the other hand, by direct inspection, for every \(\alpha < 1\), there exists a constant \(c_4 > 0\) depending on \(\alpha\) such that for every \(s > 0\), 
\[
s \varphi^{(1)}(s) + \varphi(s) \ge \frac{c_4}{s^\alpha}.
\]
Thus,
\[
\jac \Phi(x)
\ge \varphi(\zeta(x))^{\ell-1} \big(\varphi^{(1)}(\zeta(x))\zeta(x) + \varphi(\zeta(x)) \big)\frac{\abs{x'}^2}{\zeta(x)^2} \ge \frac{c_5}{\zeta(x)^{\ell -1 + \alpha}} \frac{\abs{x'}^2}{\zeta(x)^2}.
\]
If \(\abs{x'} > \theta(x'')\), then \(\zeta(x) \le \sqrt{2}\abs{x'}\) and we get
\[
\jac \Phi(x)
\ge  \frac{c_6}{\zeta(x)^{\ell -1 + \alpha}}.
\]
In both cases, we deduce that for every \(\beta < \ell\) and for every \( x \in \R^m \setminus T\),
\[
\jac \Phi(x)
\ge \frac{c_7}{\zeta(x)^{\beta}}.
\]
Thus, by estimate \eqref{equationEstimationDeriveeThickening} in the proof of property~\((\ref{item03014})\) above, when  \(x\in (B^{\ell}_{1-\rho}\times Q^{m-\ell}_\rho)\setminus T\),
\[
\abs{D^j\Phi(x)}\le\frac{C_5}{\zeta(x)^j} \le \frac{C_5}{(c_7)^\frac{j}{\beta}} (\jac \Phi(x))^\frac{j}{\beta}.
\qedhere
\]
\end{proof}

The proof of Lemma~\ref{lemmaThickeningAroundDualSqueleton} is complete.
\end{proof}

\begin{proof}[Proof of Proposition~\ref{lemmaThickeningFaceFromPrimalSkeleton}]
Define \(\Phi\) to be the composition of the map  \(\Phi_1\)  given by Lemma~\ref{lemmaThickeningAroundPrimalSqueleton} with any parameter \(\kappa\le \frac{1-\overline{\rho}}{\sqrt{\ell}}\)  together with the map  \(\Phi_2\)  given by Lemma~\ref{lemmaThickeningAroundDualSqueleton}; more precisely, \(\Phi=\Phi_1\circ \Phi_2\).  By composition, the map \(\Phi\) is injective and \(\Supp{\Phi}\subset Q^{\ell}_{(1-\rho)\eta }\times Q^{m-\ell}_{\rho\eta}\). Moreover, the choice of \(\kappa\) implies that \(Q^{\ell}_{\kappa\eta}\subset B^{\ell}_{(1-\overline\rho)\eta}\). Hence,
\[
 \Phi\big((Q^{\ell}_{(1-\rho)\eta }\times Q^{m-\ell}_{\underline{\rho}\eta}) \setminus T\big) \subset (Q^{\ell}_{(1-\rho)\eta } \setminus Q^{\ell}_{(1-\overline{\rho})\eta})\times Q^{m-l}_{\underline{\rho}\eta}.
\]
By the chain rule for higher order derivatives and by the estimate of the derivatives of \(\Phi_1\) (Lemma~\ref{lemmaThickeningAroundPrimalSqueleton}, see property (\(\ref{item1454}\))),
\[
\begin{split}
\abs{D^{j}\Phi(x)}
& \le C_1 \sum_{i=1}^{j} \sum_{\substack{1 \le t_1 \le \dotsc \le t_i\\ t_1 + \dotsb + t_i = j}}  \abs{D^i\Phi_1(\Phi_2(x))}\, \abs{D^{t_1} \Phi_2(x)} \dotsm \abs{D^{t_i} \Phi_2(x)}\\
& \le C_2 \sum_{i=1}^{j} \sum_{\substack{1 \le t_1 \le \dotsc \le t_i\\ t_1 + \dotsb + t_i = j}} \frac{\abs{D^{t_1} \Phi_2(x)} \dotsm \abs{D^{t_i} \Phi_2(x)}}{\eta^{i-1}}.
\end{split}
\]
The estimate for \(D^j \Phi\)  is a consequence of the estimates of the derivatives of \(\Phi_2\) (see  Lemma~\ref{lemmaThickeningAroundDualSqueleton}, property (\(\ref{item03014}\))). The estimate for \(\jac{\Phi}\) is a consequence of the estimate for \(\jac{\Phi_2}\) given by property (\(\ref{item03015}\)) of Lemma~\ref{lemmaThickeningAroundDualSqueleton} and the lower bound for \(\jac{\Phi_1}\) given by property (\(\ref{item1455}\)) of Lemma~\ref{lemmaThickeningAroundPrimalSqueleton}.
\end{proof}


\section{Density of the class \boldmath$R_{m-\floor{kp}-1}(Q^m; N^n)$}
\label{sectionDensityR}

In this section, we prove that the class \(R_{m-\floor{kp}-1} (Q^m; N^n)\) is dense in \(W^{k, p}(Q^m; N^n)\) regardless of the topology of the manifold \(N^n\).

\begin{theorem}
\label{theoremDensityManifoldNontrivialwithouttopologicalcondition}
If $kp < m$, then $R_{m-\floor{kp}-1}(Q^m; N^n)$ is strongly dense in $W^{k, p}(Q^m; N^n)$.
\end{theorem}
 This  result implies the \textit{if} part of Theorem~\ref{theoremDensityManifoldNontrivial}.

\begin{proof}[Proof of Theorem~\ref{theoremDensityManifoldNontrivialwithouttopologicalcondition}]
First observe that if \(u \in W^{k, p}(Q^m; N^n)\), then the restrictions to \(Q^m\) of the maps \(u_\gamma \in W^{k, p}(Q_{1 + 2\gamma}^m; N^n)\) defined for \(x \in Q_{1 + 2 \gamma}^m\) by \(u_\gamma(x) = u (x/(1 + 2 \gamma))\)  converge strongly to \(u\) in \(W^{k, p}(Q^m; N^n)\) when \(\gamma\) tends to \(0\). 
We can thus assume from the beginning that \(u \in W^{k, p}(Q_{1 + 2\gamma}^m; N^n)\). We apply successively the opening, smoothing and thickening constructions to this map \(u\). 

We divide the proof in four parts:

\begin{Part}
Construction of a map \(u^\mathrm{th}_\eta \in W^{k, p}(Q^{m}_{1+\gamma}; \R^\nu) \cap C^\infty(Q^{m}_{1+\gamma} \setminus T^{\ell^*}_\eta; \R^\nu)\) such that for every \(j \in \{1, \dots, k\}\),
\begin{multline*}
\eta^{j} \norm{D^j u^\mathrm{th}_\eta - D^j u}_{L^p(Q^{m}_{1+\gamma})}\\
\leq \sup_{v \in B_1^m}{\eta^{j} \norm{\tau_{\psi_\eta v}(D^j u) - D^j u}_{L^p(Q^{m}_{1+\gamma})}} + C \sum_{i=1}^j  \eta^{i} \norm{D^i u}_{L^p(U^m_\eta + Q^m_{2\rho\eta})},
\end{multline*}
where \(\cU^m_\eta\) is a subskeleton of \(Q^{m}_{1+\gamma}\) and \(\cT^{\ell^*}_\eta\) is the dual skeleton of \(\cU^\ell_\eta\).
\end{Part}

\textsl{Using the terminology presented in the Introduction, the subskeleton \(\cU^m_\eta\) will be chosen to be the set of all bad cubes together with the set of good cubes which intersect some bad cube. The precise choice of \(\cU^m_\eta\) will be made in Part~2.}

\medskip

Let \(\cK^m_\eta\) be a cubication of \(Q_{1+\gamma}^m\) of radius \(0 < \eta \le \gamma\) and let \(\cU^m_\eta\) be a subskeleton of \(\cK^m_\eta\).
Let \(0 < \rho < \frac{1}{2}\); thus, 
\[
2\rho\eta \le \gamma.
\]
Given \(\ell \in \{0, \dots, m - 1\}\), we begin by opening the map \(u\) in a neighborhood of \(U^\ell_\eta\). More precisely, let \(\Phi^\mathrm{op} : \R^m \to \R^m\) be the smooth map given by Proposition~\ref{openingpropGeneral} and consider the map
\[
u^\mathrm{op}_\eta = u \circ \Phi^\mathrm{op}.
\]
In particular, \(u^\mathrm{op}_\eta \in W^{k, p}(Q^m_{1+2\gamma}; N^n)\) and \(u^\mathrm{op}_\eta = u\) in the complement of \(U^\ell_\eta + Q^m_{2\rho\eta}\). For every \(j \in \{1, \dots, k\}\), 
\begin{equation}
\label{inequalityMainOpening}
\begin{split}
\eta^j\norm{D^j u^\mathrm{op}_\eta - D^j u}_{L^p(Q^m_{1+2\gamma})} 
& = \eta^j\norm{D^j u^\mathrm{op}_\eta - D^j u}_{L^p(U^\ell_\eta + Q^m_{2\rho\eta})}\\
& \le \eta^j\norm{D^j u^\mathrm{op}_\eta}_{L^p(U^\ell_\eta + Q^m_{2\rho\eta})} + \eta^j\norm{D^j u}_{L^p(U^\ell_\eta + Q^m_{2\rho\eta})}\\
& \le \NewConstant \sum_{i = 1}^j \eta^{i} \norm{D^i u}_{L^p(U^\ell_\eta + Q^m_{2\rho\eta})}.
\end{split}
\end{equation}

We next consider a smooth function \(\psi_\eta \in C^\infty(Q^m_{1+2\gamma})\) such that 
\[
\boxed{0<  \psi_\eta \le \rho \eta.}
\]
Given a mollifier \(\varphi \in C_c^\infty(B_1^m)\),
let for every \(x\in Q^{m}_{1+\gamma + \rho\eta}\),
\[
  u^\mathrm{sm}_\eta(x) = (\varphi_{\psi_\eta(x)} \ast u^\mathrm{op}_\eta)(x).
\]
Since \(0 < \psi_\eta \le \rho\eta\), the map \(u^\mathrm{sm}_\eta : Q_{1 + \gamma+\rho\eta}^m \to \R^\nu\) is well-defined and smooth.
If 
\[
\boxed{\norm{D\psi_\eta}_{L^\infty(Q^m_{1+2\gamma})} \le \beta}
\]
for some \(\beta < 1\) and if for every \(i \in \{2, \dotsc, k\}\),
\[
\boxed{ \eta^{i} \norm{D^i \psi_\eta}_{L^\infty(Q^m_{1+2\gamma})} \le \eta, }
\]
then by Proposition~\ref{lemmaConvolutionEstimates} with \(\omega = Q^m_{1+\gamma}\), we have for every \(j \in \{1, \dots, k\}\),
\begin{multline*}
\eta^{j} \norm{D^j u^\mathrm{sm}_\eta - D^j u^\mathrm{op}_\eta}_{L^p(Q^{m}_{1+\gamma})}\\
\leq \sup_{v \in B_1^m}{\eta^{j} \norm{\tau_{\psi_\eta v}(D^j u^\mathrm{op}_\eta) - D^j u^\mathrm{op}_\eta}_{L^p(Q^{m}_{1+\gamma})}} + \Constant \sum_{i=1}^j  \eta^{i} \norm{D^i u^\mathrm{op}_\eta}_{L^p(A)},
\end{multline*}
where
\(
A = \bigcup\limits_{x \in Q^{m}_{1+\gamma} \cap \supp{D\psi_\eta}}B_{\psi_\eta(x)}^m(x).
\)
For every \(v \in B_1^m\),
\begin{multline*}
\eta^{j} \norm{\tau_{\psi_\eta v}(D^j u^\mathrm{op}_\eta) - D^j u^\mathrm{op}_\eta}_{L^p(Q^m_{1+\gamma})}
\\
\begin{aligned}
& \le \eta^{j} \norm{\tau_{\psi_\eta v}(D^j u^\mathrm{op}_\eta) - \tau_{\psi_\eta v}(D^j u)}_{L^p(Q^m_{1+\gamma})}\\
& \qquad +\eta^{j} \norm{\tau_{\psi_\eta v}(D^j u) - D^j u}_{L^p(Q^m_{1+\gamma})}+\eta^{j} \norm{D^j u^\mathrm{op}_\eta - D^j u}_{L^p(Q^m_{1+\gamma})}
\end{aligned}
\end{multline*}
and, by the change of variable formula,
\[
\norm{\tau_{\psi_\eta v}(D^j u^\mathrm{op}_\eta) - \tau_{\psi_\eta v}(D^j u)}_{L^p(Q^m_{1+\gamma})}\le \Constant \norm{D^j u^\mathrm{op}_\eta - D^j u}_{L^p(Q^m_{1+2\gamma})}.
\]

If we further assume that 
\[
\boxed{\supp{D\psi_\eta}\subset U^m_\eta,}
\]
then since \(\psi_\eta \le \rho\eta\), we have \(A \subset U^m_\eta + Q^m_{\rho\eta}\). By Proposition~\ref{openingpropGeneral}, we then have
\[
\sum_{i=1}^j  \eta^{i} \norm{D^i u^\mathrm{op}_\eta}_{L^p(A)} \le \sum_{i=1}^j  \eta^{i} \norm{D^i u^\mathrm{op}_\eta}_{L^p(U^m_\eta + Q^m_{\rho\eta})} \le \Constant \sum_{i=1}^j  \eta^{i} \norm{D^i u}_{L^p(U^m_\eta + Q^m_{2\rho\eta})}.
\]
Thus, for every \(j \in \{1, \dots, k\}\),
\begin{multline}
\label{inequalityMainSmoothening}
\eta^{j} \norm{D^j u^\mathrm{sm}_\eta - D^j u^\mathrm{op}_\eta}_{L^p(Q^m_{1+\gamma})}\\
\le \sup_{v \in B_1^m}{\eta^{j} \norm{\tau_{\psi_\eta v}(D^j u) - D^j u}_{L^p(Q^m_{1+\gamma})}}
+ {\Constant} \sum_{i=1}^j  \eta^{i} \norm{D^i u}_{L^p(U^m_\eta + Q^m_{2\rho\eta})}.
\end{multline}

Given \(0 < \underline{\rho} < \rho\), we apply thickening to the map \(u^\mathrm{sm}_\eta\) in a neighborhood of \(U^\ell_\eta\) of size \(\underline{\rho}\eta\). More precisely, denote by \(\Phi^\mathrm{th} : \R^m \to \R^m\) the smooth map given by Proposition~\ref{propositionthickeningfromaskeleton} with the parameter \(\underline{\rho}\) and let
\[
u^\mathrm{th}_\eta = u^\mathrm{sm}_\eta \circ \Phi^\mathrm{th}.
\] 
Then, \(u^\mathrm{th}_\eta = u^\mathrm{sm}_\eta\) in the complement of \(U^m_\eta + Q^m_{\underline{\rho}\eta}\).
Assuming in addition that
\[
\boxed{\ell + 1 > kp,}
\]
then by Corollary~\ref{corollaryEstimateThickening}, \(u^\mathrm{th}_\eta \in W^{k, p}(K^m_\eta; \R^\nu)\) and for every \(j \in \{1, \dots, k\}\),
\[
\begin{split}
\eta^j\norm{D^j u^\mathrm{th}_\eta - D^j u^\mathrm{sm}_\eta}_{L^p(K^m_\eta)} 
& \leq \eta^j\norm{D^j u^\mathrm{th}_\eta - D^j u^\mathrm{sm}_\eta}_{L^p(U^m_\eta + Q^m_{\underline{\rho}\eta})}\\
& \leq \eta^j\norm{D^j u^\mathrm{th}_\eta}_{L^p(U^m_\eta + Q^m_{\underline{\rho}\eta})} + \eta^j\norm{D^j u^\mathrm{sm}_\eta}_{L^p(U^m_\eta + Q^m_{\underline{\rho}\eta})}\\
& \le \Constant \sum_{i = 1}^j \eta^{i} \norm{D^i u^\mathrm{sm}_\eta}_{L^p(U^m_\eta + Q^m_{\underline{\rho}\eta})}.
\end{split}
\]
Thus, by Proposition~\ref{lemmaConvolutionEstimates} and by Proposition~\ref{openingpropGeneral},
\begin{equation}
\label{inequalityMainThickening}
\begin{split}
\eta^j\norm{D^j u^\mathrm{th}_\eta - D^j u^\mathrm{sm}_\eta}_{L^p(K^m_\eta)} 
& \le \Constant \sum_{i = 1}^j \eta^{i} \norm{D^i u^\mathrm{op}_\eta}_{L^p(U^m_\eta + Q^m_{(\underline \rho + \rho)\eta})}\\
& \le \Constant \sum_{i = 1}^j \eta^{i} \norm{D^i u}_{L^p(U^m_\eta + Q^m_{2\rho\eta})}.
\end{split}
\end{equation}
By the triangle inequality, we deduce from \eqref{inequalityMainOpening}, \eqref{inequalityMainSmoothening} and \eqref{inequalityMainThickening} that for every \(j \in \{1, \dots, k\}\),
\begin{multline*}
\eta^{j} \norm{D^j u^\mathrm{th}_\eta - D^j u}_{L^p(K^m_\eta)}\\
\leq \sup_{v \in B_1^m}{\eta^{j} \norm{\tau_{\psi_\eta v}(D^j u) - D^j u}_{L^p(Q^{m}_{1+\gamma})}} + C \sum_{i=1}^j  \eta^{i} \norm{D^i u}_{L^p(U^m_\eta + Q^m_{2\rho\eta})}.
\end{multline*}
This gives the estimate we claimed since \(K^{m}_{\eta}=Q^m_{1+\gamma}\). We observe that \(u^\mathrm{th}_\eta\) is smooth except on \((U^m_\eta + Q^m_{\underline\rho \eta}) \cap T^{\ell^*}_\eta\) where \(T^{\ell^*}_\eta\) is the dual skeleton corresponding to the cubication \(\mathcal{K}^{m}_\eta\). 
\qed

\medskip

The map \(u^\mathrm{th}_\eta\) need not have its values on the manifold \(N^n\), so we need to estimate the distance between the image of \(u^\mathrm{th}_\eta\) and \(N^n\).

\begin{Part}
The directed Hausdorff distance from the image of the map \(u^\mathrm{th}_\eta\) to the manifold \(N^n\) satisfies the estimate
\begin{multline*}
\Dist_{N^n}{(u^\mathrm{th}_\eta(K^m_\eta\setminus T^{\ell^*}_\eta))}
\le \max \biggl\{  \max_{\sigma^m \in \cK^m_\eta \setminus \cE^m_\eta} \frac{C'}{\eta^{\frac{m}{kp} - 1}} \norm{Du}_{L^{kp}(\sigma^m + Q_{2\rho\eta}^m)},\\
\sup_{x \in U^\ell_\eta + Q_{\underline{\rho}\eta}^m}\frac{C''}{\abs{Q_{s}^m}^2} \int\limits_{Q_{s}^m(x)}\int\limits_{Q_{s}^m(x)} \abs{u^\mathrm{op}_\eta(y) - u^\mathrm{op}_\eta(z)} \dif y\dif z\biggr\},
\end{multline*}
where the directed Hausdorff distance from a set \(S \subset \R^\nu\) to \(N^n\) is
\[
\Dist_{N^n}{(S)} = \sup{\big\{ \dist{(x, N^n)} : x \in S \big\}},
\]
\(\cE^m_\eta\) is a subskeleton of \(\cU^m_\eta\), and \(0 < s < \eta\).

\end{Part}

\textsl{The subskeleton \(\cE^m_\eta\) will be chosen at the end of  Part~2 as the set of bad cubes and \(\cK^m_\eta \setminus \cE^m_\eta\) will be the set of good cubes.
This estimate implies that for every \(\eta > 0\) sufficiently small, the image of \(u^\mathrm{th}_\eta\) is contained in a small tubular neighborhood of \(N^n\).}

\medskip

We first observe that by Proposition~\ref{propositionthickeningfromaskeleton} \((\ref{itempropositionthickeningfromaskeleton2})\), \(\Phi^\textrm{th}(K^m_\eta\setminus (T^{\ell^\ast}\cup U^m_\eta)) \subset K^m_\eta\setminus U^m_\eta\) while by  Proposition~\ref{propositionthickeningfromaskeleton}~\((\ref{itempropositionthickeningfromaskeleton3})\), \(\Phi^\textrm{th}(U^m_\eta\setminus T^{\ell^\ast})\subset U^\ell_\eta+Q^{m}_{\underline{\rho}\eta}\). Hence,
\[
\Phi^\mathrm{th}(K^m_\eta \setminus T^{\ell^*}_\eta) \subset (K^m_\eta \setminus U^m_\eta) \cup (U^\ell_\eta + Q^m_{\underline{\rho}\eta}).
\]
In terms of the directed Hausdorff distance we have
\[
\Dist_{N^n}{(u^\mathrm{th}_\eta(K^m_\eta\setminus T^{\ell^*}_\eta))} \le \Dist_{N^n}\Bigl( \big(u^\mathrm{sm}_\eta \big((K^m_\eta \setminus U^m_\eta) \cup (U^\ell_\eta + Q^m_{\underline{\rho}\eta})\big) \Bigr).
\]
Since the image of the map \(u^\mathrm{op}_\eta\) obtained by opening \(u\) is contained in \(N^n\) (see Lemma~\ref{lemmaOpeningLp}), for every \(x \in K^m_\eta\) we have
\[
\dist{(u^\mathrm{sm}_\eta(x), N^n)} 
 \le \frac{1}{\abs{Q_{\psi_\eta(x)}^m}} \int\limits_{Q_{\psi_\eta(x)}^m(x)} \abs{u^\mathrm{sm}_\eta(x) - u^\mathrm{op}_\eta(z)} \dif z.
\]
On the other hand, since \(u^\mathrm{sm}_\eta\) is the convolution of \(u^\mathrm{op}_\eta\) with a mollifier,
\[
\begin{split}
  \abs{u^\mathrm{sm}_\eta(x) - u^\mathrm{op}_\eta(z)} 
  &\le \frac{1}{\psi_\eta(x)^m} \int\limits_{B_{\psi_\eta(x)}^m(x)} \varphi \Bigl(\frac{x - y}{\psi_\eta (x)}\Bigr) \abs{u^\mathrm{op}_\eta(y) - u^\mathrm{op}_\eta(z)} \dif y\\
  &\le \frac{\NewConstant}{\abs{Q_{\psi_\eta(x)}^m}} \int\limits_{Q_{\psi_\eta(x)}^m(x)} \abs{u^\mathrm{op}_\eta(y) - u^\mathrm{op}_\eta(z)} \dif y .
\end{split}
\]
Thus,
\begin{equation}
\label{equationDistanceBasic}
\dist{(u^\mathrm{sm}_\eta(x), N^n)} 
\le \frac{\SameConstant}{\abs{Q_{\psi_\eta(x)}^m}^2} \int\limits_{Q_{\psi_\eta(x)}^m(x)}\int\limits_{Q_{\psi_\eta(x)}^m(x)} \abs{u^\mathrm{op}_\eta(y) - u^\mathrm{op}_\eta(z)} \dif y\dif z.
\end{equation}

Since \(N^n\) is a compact subset of \( \R^\nu \), \(u\) is bounded. 
By the Gagliardo-Nirenberg interpolation inequality (see \cite{Gagliardo, Nirenberg1959}),
\(D u \in L^{kp}(Q^m_{1 + 2\gamma})\). By the Poincar\'e-Wirtinger inequality,
\begin{multline*}
 \frac{1}{\abs{Q_{\psi_\eta(x)}^m}^2} \int\limits_{Q_{\psi_\eta(x)}^m(x)}\int\limits_{Q_{\psi_\eta(x)}^m(x)} \abs{u^\mathrm{op}_\eta(y) - u^\mathrm{op}_\eta(z)} \dif y\dif z \\
 \le \frac{\Constant}{\psi_\eta(x)^{\frac{m}{kp} - 1}} \norm{Du^\mathrm{op}_\eta}_{L^{kp}(Q_{\psi_\eta(x)}^m(x))}.
\end{multline*}
Since \(\psi_\eta \le \rho \eta\), if \(\sigma^m \in \cK^m_\eta\) is such that \(x \in \sigma^m\), then \(Q_{\psi_\eta(x)}^m(x) \subset \sigma^m + Q_{\rho\eta}^m\). Hence,
\[
\begin{split}
\dist{(u^\mathrm{sm}_\eta(x), N^n)} 
& \le \frac{\Constant}{\psi_\eta(x)^{\frac{m}{kp} - 1}} \norm{Du^\mathrm{op}_\eta}_{L^{kp}(Q_{\psi_\eta(x)}^m(x))} \\
& \le \frac{\SameConstant}{\psi_\eta(x)^{\frac{m}{kp} - 1}} \norm{Du^\mathrm{op}_\eta}_{L^{kp}(\sigma^m + Q_{\rho\eta}^m)}.
\end{split}
\]
Thus, by Addendum~\ref{addendumW1kp} to Proposition~\ref{openingpropGeneral}, 
\[
\dist{(u^\mathrm{sm}_\eta(x), N^n)} \le \frac{\Constant}{\psi_\eta(x)^{\frac{m}{kp} - 1}} \norm{Du}_{L^{kp}(\sigma^m + Q_{2\rho\eta}^m)}.
\]
We rewrite this estimate for every \(x \in K^m_\eta\) as
\begin{equation}
\label{equationDistanceKU}
\dist{(u^\mathrm{sm}_\eta(x), N^n)} \le \Big(\frac{\eta}{\psi_\eta(x)}\Big)^{\frac{m}{kp} - 1} \frac{\SameConstant}{\eta^{\frac{m}{kp} - 1}} \norm{Du}_{L^{kp}(\sigma^m + Q_{2\rho\eta}^m)}.
\end{equation}

If \(x\in (U^{\ell}_\eta+Q^{m}_{\underline{\rho}\eta}) \cap U^{m}_{\eta}\), then \(x\in \sigma^m\) for some cube \(\sigma^m \in \cU^m_\eta\).  If 
\[
\boxed{\psi_\eta (x) \le (\rho - \underline\rho) \eta,}
\] 
then \(Q_{\psi_\eta(x)}^m(x) \subset U^\ell_\eta + Q^m_{{\rho}\eta}\). 
By Addendum~\ref{addendumVMO} to Proposition~\ref{openingpropGeneral}, we have
\begin{multline*}
\frac{1}{\abs{Q_{\psi_\eta(x)}^m}^2} \int\limits_{Q_{\psi_\eta(x)}^m(x)}\int\limits_{Q_{\psi_\eta(x)}^m(x)} \abs{u^\mathrm{op}_\eta(y) - u^\mathrm{op}_\eta(z)} \dif y\dif z \\
\le (\psi_\eta(x))^{1 - \frac{\ell}{kp}} \frac{\Constant}{\eta^{\frac{m - \ell}{kp}}} \norm{Du}_{L^{kp}(\sigma^m + Q^m_{2\rho\eta})}.
\end{multline*}
Therefore, 
\begin{equation*}
\dist{(u^\mathrm{sm}_\eta(x), N^n)} \le (\psi_\eta(x))^{1 - \frac{\ell}{kp}} \frac{\SameConstant}{\eta^{\frac{m - \ell }{kp}}} \norm{Du}_{L^{kp}(\sigma^m + Q^m_{2\rho\eta})}.
\end{equation*}
We rewrite this estimate for every \(x\in (U^{\ell}_\eta+Q^{m}_{\underline{\rho}\eta}) \cap U^{m}_{\eta}\) as
\begin{equation}\label{equationDistanceKUbis}
\dist{(u^\mathrm{sm}_\eta(x), N^n)} \le \Big(\frac{\psi_\eta(x)}{\eta}\Big)^{1 - \frac{\ell}{kp}} \frac{\SameConstant}{\eta^{\frac{m}{kp} - 1}} \norm{Du}_{L^{kp}(\sigma^m + Q^m_{2\rho\eta})}.
\end{equation}

We now describe the function \(\psi_\eta\) that we shall take. 
Given two parameters \(0 < s < t\) and given a function \(\zeta \in C^\infty(Q^m_{1+2\gamma})\),
we define 
\[
\psi_\eta = t\zeta+s (1 - \zeta).
\]
More precisely, let \(\cE^m_\eta\) be a subskeleton of \(\cU^m_\eta\) such that 
\[
\boxed{E^m_\eta \subset \Int{U^m_\eta}}
\]
in the relative topology of \(Q^{m}_{1+\gamma}\). Since \(\dist{(E^m_\eta, K^m_\eta \setminus U^m_\eta)} \ge \eta\), we take a function \(\zeta \in C^\infty(K^m_\eta)\) such that
\begin{enumerate}[$(i)$]
\item \(0 \le \zeta \le 1\) in \(K^m_\eta\),
\item \(\zeta = 1\) in \(K^m_\eta \setminus U^m_\eta\),
\item \(\zeta = 0\) in \(E^m_\eta\),
\item for every \(j \in \{1, \dots, k\}\), \(\eta^j\norm{D^j\zeta}_{L^\infty} \le \tilde C\), for some constant \(\tilde C > 0\) depending only on \(m\).
\end{enumerate}
Thus, \(\supp{D\psi_\eta} \subset U_\eta^m\) and
\[
\eta^j\norm{D^j\psi_\eta}_{L^\infty} \le \tilde C t.
\]
In order to apply Proposition~\ref{lemmaConvolutionEstimates} and to have \(\psi_\eta \le (\rho - \underline{\rho}) \eta\), we choose 
\[
t = \min \Bigl\{ \frac{\kappa}{\tilde C}, \rho - \underline{\rho}\Bigr\} \, \eta,
\]
for some fixed number \(0 < \kappa < 1\).

Since \(\psi_\eta = t \) in \(K^m_\eta \setminus U^m_\eta\) and \(t \ge c\eta\) for some constant \(c > 0\) independent of \(\eta\), we have from \eqref{equationDistanceKU},
\[
\Dist_{N^n}{\big(u^\mathrm{sm}_\eta(K^m_\eta \setminus U^m_\eta)\big)} \le  \max_{\sigma^m \in \cK^m_\eta \setminus \cU^m_\eta} \frac{\Constant}{\eta^{\frac{m}{kp} - 1}} \norm{Du}_{L^{kp}(\sigma^m + Q_{2\rho\eta}^m)}.
\]
Since \(\psi_\eta = s \) in \(E^m_\eta\), we have from \eqref{equationDistanceBasic},
\begin{multline*}
\Dist_{N^n}{\Bigl(u^\mathrm{sm}_\eta\big((U^\ell_\eta + Q^m_{\underline{\rho}\eta})\cap E^{m}_{\eta}\big)\Bigr)}\\
\le  \sup_{x \in U^\ell_\eta + Q_{\underline{\rho}\eta}^m}\frac{C_1}{\abs{Q_{s}^m}^2} \int\limits_{Q_{s}^m(x)}\int\limits_{Q_{s}^m(x)} \abs{u^\mathrm{op}_\eta(y) - u^\mathrm{op}_\eta(z)} \dif y\dif z.
\end{multline*}
Finally, if 
\[
\boxed{\ell \le kp,}
\]
then by \eqref{equationDistanceKUbis} and by the estimate \(\psi_\eta(x) \leq t = \Constant \eta\), we get
\[
\Dist_{N^n}{\Bigl(u^\mathrm{sm}_\eta\big((U^\ell_\eta + Q^m_{\underline{\rho}\eta})\cap (U^{m}_{\eta} \setminus E^m_{\eta} \big)\Bigr)}
\le  \max_{\sigma^m \in \cU^m_\eta \setminus \cE^m_\eta} \frac{\Constant}{\eta^{\frac{m}{kp} - 1}} \norm{Du}_{L^{kp}(\sigma^m + Q_{2\rho\eta}^m)}.
\]
Since we have already required that \(\ell+1>kp\), we are thus led to take 
\[
\boxed{\ell = \floor{kp}.}
\] 
We deduce that
\begin{multline*}
\Dist_{N^n}{(u^\mathrm{th}_\eta(K^m_\eta\setminus T^{\ell^*}_\eta))}
\le \max \biggl\{ \max_{\sigma^m \in \cK^m_\eta \setminus \cE^m_\eta} \frac{C'}{\eta^{\frac{m}{kp} - 1}} \norm{Du}_{L^{kp}(\sigma^m + Q_{2\rho\eta}^m)},\\
 \sup_{x \in U^\ell_\eta + Q_{\underline{\rho}\eta}^m}\frac{C''}{\abs{Q_{s}^m}^2} \int\limits_{Q_{s}^m(x)}\int\limits_{Q_{s}^m(x)} \abs{u^\mathrm{op}_\eta(y) - u^\mathrm{op}_\eta(z)} \dif y\dif z\biggr\}.
\end{multline*}
This gives the estimate we claimed.

The nearest point projection \(\Pi\) onto \(N^n\) is well-defined and  smooth on  a tubular neighborhood of \(N^n\) of radius \(\iota > 0\). 
We now choose the subskeleton \(\cE^m_\eta\) used in the definition of \(\zeta\) and \(\psi_\eta\) as the set of cubes \(\sigma^m \in \cK^m_\eta\) such that
\[
\frac{C'}{\eta^{\frac{m}{kp} - 1}} \norm{Du}_{L^{kp}(\sigma^m + Q_{2\rho\eta}^{m})} > \iota.
\]
Thus,
\[
\max_{\sigma^m \in \cK^m_\eta \setminus \cE^m_\eta} \frac{C'}{\eta^{\frac{m}{kp} - 1}} \norm{Du}_{L^{kp}(\sigma^m + Q_{2\rho\eta}^m)} \le \iota.
\]

We then take the subskeleton \(\cU^m_\eta\) used in the constructions of opening and thickening as the the set of cubes \(\sigma^m \in \cK^m_\eta\) which intersect some cube in \(\cE^m_\eta\); in particular \(\Int{E^m_\eta} \subset U^m_\eta\) in the relative topology of \(Q_{1 + \gamma}^m\). 

In view of the uniform limit of Addendum~\ref{addendumVMO} to Proposition~\ref{openingpropGeneral}, since
\(\ell \le kp\), for every \(s > 0\) small enough,
\[
\sup_{x \in U^\ell_\eta + Q_{\underline{\rho}\eta}^m}\frac{C''}{\abs{Q_{s}^m}^2} \int\limits_{Q_{s}^m(x)}\int\limits_{Q_{s}^m(x)} \abs{u^\mathrm{op}_\eta(y) - u^\mathrm{op}_\eta(z)} \dif y\dif z \le \iota.
\]
We conclude that \(u^\mathrm{th}_\eta(K^m_\eta \setminus T^{\ell^*}_\eta)\) is contained in a tubular neighborhood of \(N^n\) of radius \(\iota\).
\qed

\begin{Part}
The maps \(\Pi \circ u^\mathrm{th}_\eta\) converge to \(u\) in \(W^{k, p}(Q_1^m; N^n)\) as \(\eta\) tends to \(0\).
\end{Part}

Using the estimate from Part~1, we show that for every \(j \in \{1, \dots, k\}\),
\[
\lim\limits_{\eta \to 0} \norm{D^j u^\mathrm{th}_\eta - D^j u}_{L^p(Q^m_{1+\gamma})} = 0.
\]
By continuity of the translation operator in \(L^p\) (see the remark following Proposition~\ref{lemmaConvolutionEstimatesLp}),
\begin{equation}\label{equationLpConvergenceTranslates}
 \lim_{\eta \to 0}
\sup_{v \in B_1^m}{\norm{\tau_{\psi_\eta v}(D^j u) - D^j u}_{L^p(Q^{m}_{1+\gamma})}}
= 0.
\end{equation}

We now need to show that 
\[
\lim_{\eta \to 0} \sum_{i=1}^j  \eta^{i-j} \norm{D^i u}_{L^p(U^m_\eta + Q^m_{2\rho\eta})} = 0.
\]
By the Gagliardo-Nirenberg interpolation inequality,
for every \(i \in \{1, \dots, k-1\}\), \(D^i u \in L^{\frac{kp}{i}}(Q^m_{1 + 2\gamma})\). 
By H\"older's inequality, for every \(i \in \{1, \dots, k\}\) we have
\[
\begin{split}
\eta^{i-j} \norm{D^i u}_{L^p(U^m_\eta + Q^m_{2\rho\eta})} 
& \le \eta^{i-j} \abs{U^m_\eta + Q^m_{2\rho\eta}}^{\frac{k-i}{kp}} \norm{D^i u}_{L^{\frac{kp}{i}}(U^m_\eta + Q^m_{2\rho\eta})}\\
& = \eta^{k-j} \left(\frac{\abs{U^m_\eta + Q^m_{2\rho\eta}}}{\eta^{kp}} \right)^{\frac{k - i}{kp}} \norm{D^i u}_{L^{\frac{kp}{i}}(U^m_\eta + Q^m_{2\rho\eta})}.
\end{split}
\]
From this estimate, we need that \(\abs{U^m_\eta + Q^m_{2\rho\eta}} = O(\eta^{kp})\) as \(\eta \to 0\).
We observe that \(\abs{U^m_\eta + Q^m_{2\rho\eta}}\) satisfies the following estimate in terms of the number of elements \(\#\cU^m_\eta\) of the subskeleton \(\cU^m_\eta\),
\[
\bigabs{U^m_\eta + Q^m_{2\rho\eta}} \le 2^m (\eta + 2\rho\eta)^m (\#\cU^m_\eta) = \NewConstant \eta^m (\#\cU^m_\eta).
\]
Note that for every cube \(\sigma^m \in \cU^m_\eta\), if \(\tau^m \in \cE^m_\eta\) intersects \(\sigma^m\), then \(\tau^m + Q_{2\rho\eta}^m \subset \sigma^m + Q^m_{2(1 + \rho)\eta}\). Denoting \(\sigma^m\) by \(Q^m_\eta(a)\), we have
\(
\tau^m + Q_{2\rho\eta}^m \subset Q^m_{\alpha\eta} (a),
\)
where \(\alpha = 3 + 2 \rho\), whence
\[
\tau^m + Q_{2\rho\eta}^m \subset Q^m_{\alpha\eta} (a) \cap Q_{1 + 2\gamma}^m.
\]
By the definition of \(\cE^m_\eta\),
\[
\iota < \frac{C'}{\eta^{\frac{m}{kp} - 1}} \norm{Du}_{L^{kp}(\tau^m + Q_{2\rho\eta}^m)} \le \frac{C'}{\eta^{\frac{m}{kp} - 1}} \norm{Du}_{L^{kp}(Q^m_{\alpha\eta} (a)  \cap Q_{1 + 2\gamma}^m)}.
\]
Thus, for every \(Q^m_{\eta} (a) \in \cU^m_\eta\),
\[
1 < \frac{\Constant}{\eta^{m - kp}} \int\limits_{Q^m_{\alpha\eta} (a)  \cap Q_{1 + 2\gamma}^m} \abs{Du}^{kp}.
\]
Since the cubes \(Q^m_{\alpha\eta} (a)\) intersect each other finitely many times  and the number of overlaps only depend on \(\alpha\) and on the dimension \(m\),
\[
\#\cU^m_\eta \le \frac{\SameConstant}{\eta^{m - kp}} \sum_{Q^m_{\eta} (a) \in \cU^m_\eta} \int\limits_{Q^m_{\alpha\eta} (a)  \cap Q_{1 + 2\gamma}^m} \abs{Du}^{kp} \le \frac{\Constant}{\eta^{m - kp}}  \int\limits_{Q^m_{1 + 2 \gamma}} \abs{Du}^{kp}.
\]
We deduce that
\[
\bigabs{U^m_\eta + Q^m_{2\rho\eta}} 
\le
\Constant \eta^m \frac{1}{\eta^{m - kp}} \int\limits_{Q^m_{1 + 2 \gamma}} \abs{Du}^{kp} = \SameConstant  \eta^{kp} \int\limits_{Q^m_{1 + 2 \gamma}}  \abs{Du}^{kp}.
\]
This means that 
\[
  \limsup_{\eta \to 0}{\frac{\bigabs{U^m_\eta + Q^m_{2\rho\eta}}}{\eta^{kp}}} < \infty.
\]
Hence, by Lebesgue's dominated convergence theorem,
\[
\lim_{\eta \to 0}\norm{D^i u}_{L^{\frac{kp}{i}}(U^m_\eta + Q^m_{2\rho\eta})} = 0.
\]
In view of \eqref{equationLpConvergenceTranslates} and the estimate from Part~1, we have \(\lim\limits_{\eta \to 0} \norm{D^j u^\mathrm{th}_\eta - D^j u}_{L^p(Q^m_{1+\gamma})} = 0\).

Recall that \(u^\mathrm{th}_\eta = u^\mathrm{sm}_\eta\) in the complement of \(U^m_\eta + Q^m_{\underline\rho\eta}\). Since \(u^\mathrm{sm}_\eta \to u\) in measure and \(\abs{U^m_\eta + Q^m_{\underline\rho\eta }}\to 0\) as \(\eta \to 0\),  \(u^\mathrm{th}_\eta \to u\) in measure as \(\eta \to 0\). Hence, \(u^\mathrm{th}_\eta\) converges to \(u\) in \(L^p(Q^m_{1+\gamma})\) and
\[
\lim_{\eta \to 0}{\norm{u^\mathrm{th}_\eta - u}_{W^{k, p}(Q_{1+\gamma}^m)}} = 0.
\]
Therefore,
\[
\lim_{\eta \to 0}{\norm{\Pi\circ u^\mathrm{th}_\eta - u}_{W^{k, p}(Q_{1+\gamma}^m)}} = 0.
\]
This gives the conclusion of this part.
\qed

\begin{Part}
The map \(\Pi \circ u^\mathrm{th}_\eta\) belongs to the class \(R_{\ell^*}(Q^m; N^n)\).
\end{Part}

It suffices to prove the pointwise estimates of \(D^j (\Pi\circ u^\mathrm{th}_\eta)\). Since \(\Pi \circ u^\mathrm{th}_\eta = (\Pi\circ u^\mathrm{sm}_\eta) \circ \Phi^\mathrm{th}\) and the map \(\Pi \circ u^\mathrm{sm}_\eta\) is smooth in \(K^m_\eta\), by the chain rule for higher order derivatives,
\begin{align*}
\abs{D^j (\Pi\circ u^\mathrm{th}_\eta)} 
& \le \Constant \sum_{i = 1}^j \sum_{\substack{1 \le \alpha_1 \le \dotsc \le \alpha_i\\\alpha_1 + \dots + \alpha_i = j}} \abs{D^i(\Pi \circ u^\mathrm{sm}_\eta)} \abs{D^{\alpha_1}\Phi^\mathrm{th}} \dotsm \abs{D^{\alpha_i}\Phi^\mathrm{th}}\\
& \le \Constant \sum_{i = 1}^j \sum_{\substack{1 \le \alpha_1 \le \dotsc \le \alpha_i\\\alpha_1 + \dots + \alpha_i = j}}  \abs{D^{\alpha_1}\Phi^\mathrm{th}} \dotsm \abs{D^{\alpha_i}\Phi^\mathrm{th}}.
\end{align*}
By Proposition~\ref{propositionthickeningfromaskeleton}~\((\ref{itempropositionthickeningfromaskeleton5})\), we have for \(x \in K^m_\eta \setminus T^{\ell^*}_\eta\),
\begin{align*}
\abs{D^j (\Pi\circ u^\mathrm{th}_\eta)(x)} 
& \le \Constant \sum_{i = 1}^j \sum_{\substack{1 \le \alpha_1 \le \dotsc \le \alpha_i\\\alpha_1 + \dots + \alpha_i = j}}  \frac{\eta}{\big(\dist{(x, T^{\ell^*}_\eta)}\big)^{\alpha_1}} \dotsm \frac{\eta}{\big(\dist{(x, T^{\ell^*}_\eta)}\big)^{\alpha_i}}\\
& \le  \frac{\Constant}{\big(\dist{(x, T^{\ell^*}_\eta)}\big)^{j}}.
\end{align*}
This concludes the proof of the theorem.
\end{proof}


\section{Proof of Theorem~\ref{theoremDensityManifoldNontrivial}}

Let \(kp < m\).
It is a consequence of Theorem~\ref{theoremDensityManifoldNontrivialwithouttopologicalcondition}  that  \(R_{m-\floor{kp}-1}(Q^m ; N^n)\) is dense in \(W^{k,p}(Q^m ; N^n)\).
In this section, we prove that if \(\pi_{\floor{kp}} (N^n) \not \simeq \{0\}\) and if \(i\in \{0, \dotsc, m-1\}\) is such that 
\begin{enumerate}[\((a)\)]
\item\(R_{i}(Q^m; N^n) \subset W^{k, p}(Q^m; N^n)\),
\item \(R_{i}(Q^m; N^n)\) is dense in \(W^{k, p}(Q^m; N^n)\),
\end{enumerate}
then \(i = m-\floor{kp}-1\).

\medskip
We first prove that \(i <  m-\floor{kp}\). 
For this purpose, let \(\gamma : \R \to N^n\) be a geodesic in \(N^n\).
Given \(i \ge  m-\floor{kp}\), the map \(u : \overline{Q}^m \to N^n\) defined for \(x=(x', x'')\in \overline Q^{m-i} \times \overline Q^i\) by
\[
u(x)
= \gamma \big(\log{\abs{x'}} \big)
\]
belongs to \(R_{i}(Q^m; N^n)\). 
Taking \(\gamma\) parametrized by arc-length, we have
\[
|Du(x)|= \frac{1}{\abs{x'}}.
\]
Since \(i \ge  m-\floor{kp}\), it follows that \(Du \not\in L^{\floor{kp}}(Q^m)\).
By the Gagliardo-Nirenberg interpolation inequality, we deduce that \(R_{i}(Q^m; N^n) \not\subset W^{k,p}(Q^m ; N^n)\).

\medskip
We now prove that \(i >  m-\floor{kp}-2\).
Given a smooth map \(\varphi : \S^{\floor{kp}} \to  N^n\),
we define \(u : \overline Q^m \to \N^n\) for \(x=(x', x'') \in \overline Q^{\floor{kp}+1} \times \overline Q^{m-\floor{kp}-1}\) by
\[
u(x)= \varphi\Big(\frac{x'}{\abs{x'}}\Big).
\] 
Then, \(u \in W^{k, p}(Q^m; N^n)\). 
Given \(i \in \{0, \dotsc, m-\floor{kp}-2\}\), 
assume by contradiction that there exists a sequence \((u_j)_{j\in \N}\) in \(R_i(Q^m; N^n)\)  converging to \(u\) in \(W^{k, p}(Q^m; N^n)\). 
Passing to a subsequence if necessary, for almost every \(x'' \in Q^{m-\floor{kp}-1}\) and for almost every \(\rho \in (0, 1)\), the sequence \((u_j|_{\S^{\floor{kp}}_{\rho}\times \{x''\}})_{j \in \N}\) converges to \(u|_{\S^{\floor{kp}}_{\rho}\times \{x''\}}\) in \(W^{k,p}(\S^{\floor{kp}}_{\rho}; N^n)\), whence in \(\textrm{BMO}(\S_\rho^{\floor{kp}} ; N^n)\).

For every \(j \in \N\), denote by \(T_j\) a finite union of \(i\) dimensional planes such that \(u_j \in C^{\infty}(\overline{Q}^m \setminus T_j ; N^n)\).  
Since \(i \le  m-\floor{kp}-2\), for every \((x'', \rho) \in Q^{m-\floor{kp}-1}\times (0, 1)\) such that \(\S^{\floor{kp}}_{\rho}\times \{x''\} \subset \overline{Q}^m \setminus T_j\), there exist \(a\in \overline{Q}^m \setminus T_j \) and  a continuous map \(h : [0,1] \times (\S^{\floor{kp}}_{\rho}\times \{x''\}) \to \overline{Q}^m \setminus T_j \) such that  for every \(y \in \S^{\floor{kp}}_{\rho}\times \{x''\}\), \(h(0, y) = y \) and \(h(1,y) = a\). 
This implies that \(u_j|_{\S^{\floor{kp}}_{\rho}\times \{x''\}}\)  is homotopic to a constant.

We recall that homotopy classes are preserved under BMO convergence:

\begin{claim}
\label{standardtopologicalresult}
Let \((v_j)_{j\in \N} \) be a sequence in \( C^{0}(\S^{\floor{kp}} ; N^n)\) which converges to \(v  \in C^{0}(\S^{\floor{kp}} ; N^n)\) in \(\mathrm{BMO}(\S^{\floor{kp}} ; N^n)\). 
Then, for every \(j \in \N\) sufficiently large, \(v_j\) is homotopic to \(v\) in \(C^{0}(\S^{\floor{kp}}; N^n)\).
 \end{claim}
 
This claim is essentially \cite[Lemma~A.19]{Brezis-Nirenberg} but we present a proof for the convenience of the reader.
 
\begin{proof}[Proof of the Claim]
For every \(\epsilon >0\), we consider the map \(v_{\epsilon} : \S^{\floor{kp}} \to N^n\) defined for \(x \in \S^{\floor{kp}}\) by
\[
v_{\epsilon}(x) 
= \frac{1}{\abs{D^{\floor{kp}}_{\epsilon}(x)}} \int\limits_{D^{\floor{kp}}_{\epsilon}(x)}v
\] 
where \(D^{\floor{kp}}_{\epsilon}(x) =\S^{\floor{kp}} \cap Q^{\floor{kp}+1}_{\epsilon}(x)\).
Accordingly, for every \(j \in \N\) we define \(v_{j, \epsilon}\), with \(v\) replaced by \(v_j\).

The nearest point projection \(\Pi\) is well-defined and smooth on a tubular neighborhood of \(N^n\) of radius \(\iota >0\).
Since \(v_\epsilon\) converges uniformly to \(v\) as \(\epsilon\) tends to \(0\), there exists \(\epsilon_1 > 0\) such that for every \(0 < \epsilon \le \epsilon_1\), \(\Pi \circ v_\epsilon\) is well-defined and is homotopic to \(v\). 

Next, for every \(j\in \N\) and for every \(x\in \S^{\floor{kp}}\), since \(v_j(x) \in N^n\),
\[
\begin{split}
\dist{(v_{j,\epsilon}(x) , N^n)} 
& \leq  \frac{1}{\abs{D^{\floor{kp}}_{\epsilon}(x)}} \int\limits_{D^{\floor{kp}}_{\epsilon}(x)}
\biggl| {v_j(y) - \frac{1}{\abs{D^{\floor{kp}}_{\epsilon}(x)}}\int\limits_{D^{\floor{kp}}_{\epsilon}(x)} v_j}
\biggr| \dif y \\
& \leq \norm{v_j - v}_{\textrm{BMO}(\S^{\floor{kp}})} + 
2\sup_{y \in D^{\floor{kp}}_{\epsilon}(x)}{\abs{v(y) - v(x)}}.
\end{split}
\]
Since the sequence \((v_j)_{j \in \N}\) converges to \(v\) in \(\textrm{BMO}(\S^{\floor{kp}})\) and \(v\) is uniformly continuous, there exist \(J \in \N\) and \(\epsilon_2 > 0\) such that for every \(j \ge J\) and for every \(0 < \epsilon \le \epsilon_2\),
\[
\dist{(v_{j, \epsilon}(x) , N^n)} \le \iota.
\]
In particular, \(\Pi \circ v_{j, \epsilon}\) is well-defined and the continuous extension of the function \(t \in (0, 1] \mapsto \Pi \circ v_{j, t\epsilon} \) gives a homotopy between \(\Pi \circ v_{j, \epsilon}\) and \(v_j\).

Finally, for every \(\epsilon > 0\) the sequence \((v_{j, \epsilon})_{j \in \N}\) converges uniformly to \(v_{\epsilon}\).
For \(0 < \epsilon < \min{\{\epsilon_1, \epsilon_2\}}\) and for \(j \ge J\), the functions \(\Pi \circ v_{j, \epsilon}\) are well-defined and converge uniformly to \(\Pi \circ v_\epsilon\) as \(j\) tends to infinity.
Thus, there exists \(\overline J \ge J\) such that for every \(j \in \N\) with \(j \ge \overline{J}\), \(\Pi \circ v_{j, \epsilon}\) is homotopic to \(\Pi \circ v_{\epsilon}\).
By transitivity of the homotopy relation, we conclude from the above that for every such \(j\), \(v_j\) is homotopic to \(v\).
\end{proof}

We deduce from the claim that \(u|_{\S_\rho^{\floor{kp}} \times \{x''\}}\) is homotopic to a constant,
whence \(\varphi : \S^{\floor{kp}} \to  N^n\) is homotopic to a constant.
Since \(\pi_{\floor{kp}} (N^n) \not \simeq \{0\}\) and \(\varphi : \S^{\floor{kp}} \to N^n\) is an arbitrary smooth function, we get a contradiction.
This completes the proof of Theorem~\ref{theoremDensityManifoldNontrivial}.
\qed



\section{Continuous extension property}

From Theorem~\ref{theoremDensityManifoldNontrivialwithouttopologicalcondition} we are able to approximate a map by another map which is smooth except on a dual skeleton of dimension \(\floor{kp}^*\). We would like to modify our approximation near this singular set in order to obtain a smooth map. An important tool will be the following:

\begin{proposition}
\label{propositionSmoothExtension}
Let \(\cK^m\) be a skeleton of radius \(\eta > 0\),  \(\ell \in \{0, \ldots, m-1 \}\), \(\cT^{\ell^*}\) be the dual skeleton of \(\cK^\ell\) and let \(u \in C^\infty(K^m \setminus T^{\ell^*}; N^n)\). If there exists \(f \in  C^0(K^m; N^n)\) such that \(f|_{K^\ell} = u|_{K^\ell}\), then for every \(0 < \mu < 1\), there exists \(v \in C^\infty(K^m; N^n)\) such that \(v = u\) on \(K^m \setminus (T^{\ell^*} + Q^m_{\mu\eta})\).
\end{proposition}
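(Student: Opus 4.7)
The plan is to first construct a continuous extension \(\tilde u \in C^0(K^m; N^n)\) of \(u\) that agrees with \(u\) outside a small neighborhood of \(T^{\ell^*}\), and then to smoothen \(\tilde u\) near \(T^{\ell^*}\) by an adaptive convolution followed by the nearest-point projection \(\Pi\) onto \(N^n\). The decisive geometric input is that \(K^m \setminus T^{\ell^*}\) admits a smooth strong deformation retraction \(r_s\) onto \(K^\ell\), obtained cube by cube by pushing radially away from the central dual piece toward the primal boundary and iterating downward through the skeleta (a construction parallel in spirit to the thickening of Proposition~\ref{propositionthickeningfromaskeleton}). Combined with the hypotheses that \(u|_{K^\ell} = f|_{K^\ell}\) and that \(f\) is continuous on all of \(K^m\), this retraction is exactly what transfers the continuous extendability of \(f\) across \(T^{\ell^*}\) to that of \(u\).

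For the continuous extension I would work cube by cube. Fix a top cube \(\sigma^m\), let \(\tau = \sigma^m \cap T^{\ell^*}\) and consider the tube \(N_\tau = \sigma^m \cap (T^{\ell^*} + Q^m_{\mu\eta/2})\). Via the retraction \(r_s\), the restriction \(u|_{\partial N_\tau}\) is homotopic in \(\sigma^m \setminus \tau\) to \(u \circ r_1|_{\partial N_\tau} = f \circ r_1|_{\partial N_\tau}\) (using \(u = f\) on \(K^\ell\)), while the homotopy \(f \circ r_s\) connects \(f \circ r_1|_{\partial N_\tau}\) to \(f|_{\partial N_\tau}\). Concatenating these two homotopies yields a continuous homotopy \(H_s : \partial N_\tau \to N^n\) from \(u|_{\partial N_\tau}\) to \(f|_{\partial N_\tau}\). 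Using a radial collar around \(\partial N_\tau\), I interpolate: on the outer part of the collar \(\tilde u\) is given by \(H_s\) with \(s\) running from \(0\) to \(1\) as one moves inward, and on the inner part \(\tilde u = f\), which is continuous across \(\tau\). This produces a continuous map on \(\sigma^m\) coinciding with \(u\) outside \(N_\tau\); arranging each \(N_\tau\) to be compactly contained in the interior of its top cube, the local constructions glue into \(\tilde u \in C^0(K^m; N^n)\) with \(\tilde u = u\) on \(K^m \setminus (T^{\ell^*} + Q^m_{\mu\eta/2})\).

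For the smoothing step I would choose a smooth nonnegative cutoff \(\psi \in C^\infty(K^m)\) with \(\supp \psi \subset T^{\ell^*} + Q^m_{2\mu\eta/3}\) and \(\norm{\psi}_{L^\infty}\) sufficiently small, and set \(v = \Pi \circ (\varphi_\psi \ast \tilde u)\) in the spirit of the adaptive smoothing already developed in the paper. Since \(\tilde u\) is uniformly continuous on the compact set \(K^m\), for \(\norm{\psi}_{L^\infty}\) small the image of \(\varphi_\psi \ast \tilde u\) lies in a tubular neighborhood of \(N^n\) where \(\Pi\) is smooth, whence \(v \in C^\infty(K^m; N^n)\). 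Outside \(\supp \psi\) one has \(\varphi_\psi \ast \tilde u = \tilde u = u\), so \(v = \Pi \circ u = u\) there and in particular \(v = u\) on \(K^m \setminus (T^{\ell^*} + Q^m_{\mu\eta})\).

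The hardest step is the continuous extension, and within it the compatibility of the cube-by-cube construction across shared primal \((m-1)\) faces. The cleanest way to secure it is to arrange each tube \(N_\tau\) to lie compactly in the interior of the corresponding top cube, so that \(\tilde u = u\) on an open neighborhood of \(K^{m-1}\) and continuity on common faces is automatic. Verifying that the deformation retraction \(r_s\) can be chosen with this property, smoothly on \(K^m \setminus T^{\ell^*}\) and trivial near the primal boundary, is a standard but careful exercise on the geometry of the dual skeleton.
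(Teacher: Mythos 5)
Your strategy --- retract \(K^m\setminus T^{\ell^*}\) onto \(K^\ell\), concatenate the homotopy along \(u\circ r_s\) with the homotopy along \(f\circ r_s\) (the two meeting because \(u|_{K^\ell}=f|_{K^\ell}\)), interpolate in a collar around \(T^{\ell^*}\), and then mollify and project --- is exactly the one the paper uses. But the way you propose to make the construction of the continuous extension \(\tilde u\) consistent across cubes contains a genuine geometric error. You want each tube \(N_\tau=\sigma^m\cap(T^{\ell^*}+Q^m_{\mu\eta/2})\) to be compactly contained in the interior of its top cube \(\sigma^m\), so that \(\tilde u = u\) on a neighborhood of \(K^{m-1}\) and the gluing across shared \((m-1)\)-faces is automatic. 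This is impossible whenever \(\ell\le m-2\), i.e.\ whenever \(\ell^*\ge 1\): only the \(\ell\)-skeleton is disjoint from \(T^{\ell^*}\), so \(T^{\ell^*}\cap K^{\ell+1}\neq\emptyset\), and in particular the dual skeleton crosses the \((m-1)\)-faces of the top cubes. Any tube around \(T^{\ell^*}\cap\sigma^m\) therefore necessarily meets \(\partial\sigma^m\), and the compatibility you invoke has to be argued, not assumed.

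The gap is fortunately easy to remove: make the extension \emph{global}, so that no gluing is needed at all. Fix \(0<\underline\delta<\delta<\overline\delta<\mu\) and a continuous cutoff \(\varphi:K^m\to[0,1]\) depending only on the distance to \(T^{\ell^*}\), equal to \(0\) outside \(T^{\ell^*}+Q^m_{\overline\delta\eta}\), equal to \(1\) on \(\partial(T^{\ell^*}+Q^m_{\delta\eta})\), and equal to \(0\) on \(T^{\ell^*}+Q^m_{\underline\delta\eta}\). Set \(w(x)=u(H_\ell(\varphi(x),x))\) on \(K^m\setminus(T^{\ell^*}+Q^m_{\delta\eta})\), \(w(x)=f(H_\ell(\varphi(x),x))\) on \((T^{\ell^*}+Q^m_{\delta\eta})\setminus T^{\ell^*}\), and \(w=f\) on \(T^{\ell^*}\). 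Because \(H_\ell\) is defined on all of \(K^m\setminus T^{\ell^*}\) and \(\varphi\) is global, the map \(w\) is defined and continuous on all of \(K^m\), with the two branches agreeing wherever \(\varphi=1\) since \(H_\ell(1,\cdot)\) lands in \(K^\ell\), where \(u=f\). This is precisely the paper's construction. Two smaller remarks: a \emph{smooth} deformation retraction is neither available in general (Fact~\ref{factHomotopyRetraction} asserts only continuity) nor needed, since the final convolution and projection supply the smoothness; and in the mollification step, \(\supp\psi\subset T^{\ell^*}+Q^m_{\mu\eta}\) and \(\norm{\psi}_{L^\infty}\) small are not sufficient --- one must also have \(\psi>0\) on the entire set where \(\tilde u\neq u\) (so that \(\varphi_\psi\ast\tilde u\) is genuinely smoothing there) and \(\norm{D\psi}_{L^\infty}<1\), with the \(L^\infty\)-smallness serving only to keep the image inside the tubular neighborhood of \(N^n\).
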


In the proof of Proposition~\ref{propositionSmoothExtension}, we shall rely on the fact that \(K^\ell\) is a homotopy retract of \(K^m \setminus T^{\ell^*}\), 
that is, there exists a continuous retraction of \(K^m \setminus T^{\ell^*}\) onto \(K^\ell\) which is homotopic to the identity map in \(K^m \setminus T^{\ell^*}\):

\begin{fact}
\label{factHomotopyRetraction}
There exists a continuous homotopy \(H_\ell : [0, 1] \times (K^m \setminus T^{\ell^*}) \to K^m \setminus T^{\ell^*}\) such that 
\begin{enumerate}[$(i)$]
\item for every \(x \in K^m\setminus T^{\ell^*}\), \(H_\ell(0, x) = x\),
\label{item15431}
\item for every \(x \in K^m \setminus T^{\ell^*}\), \(H_\ell(1, x) \in K^\ell\),
\label{item15432}
\item for every \(x \in K^\ell\), \(H_\ell(1, x) = x\).
\label{item15433}
\end{enumerate}
\end{fact}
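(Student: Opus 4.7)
The plan is to build \(H_\ell\) explicitly, one top-dimensional cube at a time, using a radial-type retraction, and then to verify that the pieces agree continuously on shared faces. This reduces the whole statement to an elementary coordinate calculation inside a single cube, together with a symmetry check across \((m-1)\)-faces.

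Fix a cube \(\sigma^m \in \cK^m\) with center \(c\), and identify \(\sigma^m\) with \([-\eta, \eta]^m\) via translation by \(-c\). In these local coordinates one has
\[
T^{\ell^*} \cap \sigma^m = \bigl\{ x \in [-\eta, \eta]^m : \#\{i : x_i = 0\} \ge \ell + 1 \bigr\},
\]
so every \(x \in \sigma^m \setminus T^{\ell^*}\) has at least \(m - \ell\) nonzero coordinates. Let \(M(x)\) denote the \((m - \ell)\)-th largest value in the list \(\abs{x_1}, \ldots, \abs{x_m}\); then \(M\) is continuous and strictly positive on \(\sigma^m \setminus T^{\ell^*}\). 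Define
\[
r_\sigma(x)_i := \operatorname{sgn}(x_i)\, \min\!\left( \frac{\eta \abs{x_i}}{M(x)},\, \eta \right), \qquad i = 1, \ldots, m,
\]
with the convention \(\operatorname{sgn}(0) = 0\). By construction \(r_\sigma(x)\) has at least \(m - \ell\) coordinates equal to \(\pm \eta\) (precisely those \(i\) with \(\abs{x_i} \ge M(x)\)), hence \(r_\sigma(x) \in \sigma^\ell\); conversely \(r_\sigma\) fixes \(\sigma^\ell\) since \(M = \eta\) there. Continuity at points with a vanishing coordinate follows because \(\eta \abs{x_i}/M(x) \to 0\) as \(x_i \to 0\) while \(M\) remains bounded away from \(0\). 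Setting
\[
H_\sigma(t, x) := (1 - t)\, x + t\, r_\sigma(x),
\]
one sees that \(x_i\) and \(r_\sigma(x)_i\) always share the same sign (or vanish together), so convex combinations preserve the zero set coordinate by coordinate, and thus \(H_\sigma(t, x) \in \sigma^m \setminus T^{\ell^*}\) for every \(t \in [0, 1]\).

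To patch the local constructions together, consider neighbouring cubes \(\sigma^m, \tilde\sigma^m \in \cK^m\) sharing an \((m-1)\)-face, with centers related by \(\tilde c = c + 2\eta e_j\) for some \(j\). For \(x\) on the common face, \(\abs{x_j - c_j} = \eta = \abs{x_j - \tilde c_j}\) while \(\abs{x_i - c_i} = \abs{x_i - \tilde c_i}\) for \(i \ne j\). Hence the multisets of absolute displacements coincide, the value of \(M\) is the same in either frame, and a coordinate-wise check shows that the \(j\)-th coordinates of \(r_\sigma(x)\) and \(r_{\tilde\sigma}(x)\) both equal \(c_j + \eta = \tilde c_j - \eta\) while the other coordinates are given by identical formulae. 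The family \(\{H_\sigma\}_{\sigma^m \in \cK^m}\) therefore glues into a continuous homotopy \(H_\ell : [0, 1] \times (K^m \setminus T^{\ell^*}) \to K^m \setminus T^{\ell^*}\) satisfying the three required properties \((\ref{item15431})\)--\((\ref{item15433})\).

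The main obstacle is precisely this global consistency check: the retraction has to be designed so that the scaling factor \(\eta/M(x)\) and the radial directions depend only on the unsigned displacements from the nearest cube center, since crossing an \((m-1)\)-face flips the sign of exactly one local coordinate. The formula above is rigged so that these symmetries are built in; a more abstract approach via the cubical Poincaré-duality fact that \(K^\ell\) is a deformation retract of \(K^m\) minus its dual \((m - \ell - 1)\)-skeleton would also work, but the explicit formula yields a short, self-contained proof.
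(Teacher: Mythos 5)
The paper records this statement as a \emph{Fact} without proof, treating it as a standard property of cubical complexes; there is therefore no ``paper's proof'' for your argument to be compared against. Your explicit radial construction is correct and self-contained: in cube-centred coordinates on \(\sigma^m=[-\eta,\eta]^m\) the dual skeleton is exactly the set where at least \(\ell+1\) coordinates vanish, the \((m-\ell)\)-th order statistic \(M\) of \(\abs{x_1},\dotsc,\abs{x_m}\) is continuous and vanishes precisely on that set, and the map \(r_\sigma(x)_i=\operatorname{sgn}(x_i)\min(\eta\abs{x_i}/M(x),\eta)\) retracts \(\sigma^m\setminus T^{\ell^*}\) onto the \(\ell\)-skeleton of \(\sigma^m\); since each coordinate of \(r_\sigma(x)\) shares the sign of \(x_i\) (or both vanish), the affine interpolation \(H_\sigma\) never increases the number of vanishing coordinates, hence never enters \(T^{\ell^*}\). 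Two small points you leave implicit. First, your formula for \(T^{\ell^*}\cap\sigma^m\) is the correct unpacking of the paper's Definition of dual skeleton, but it should be stated as such rather than asserted. Second, for the gluing you verify agreement of \(r_\sigma\) and \(r_{\tilde\sigma}\) only on common \((m-1)\)-faces; for the gluing lemma on the closed cover \(\{\sigma^m\}\) you need agreement on \emph{every} nonempty intersection, which may be a face of any dimension \(k<m\). The same coordinate-wise calculation covers the general case, since for a common \(k\)-face the two cube centres differ by \(\pm 2\eta\) only in the \(m-k\) directions transverse to the face, so the absolute displacements and hence \(M\) coincide in both frames while the transverse coordinates of \(r_\sigma(x)\) and \(r_{\tilde\sigma}(x)\) are forced to \(\pm\eta\) on opposite sides of the shared face and thus represent the same global point; alternatively a chain of cubes through successive \((m-1)\)-faces reduces to the case you did check. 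With these two remarks made explicit, the proof is complete and is arguably a useful addition, since it makes a fact the paper uses but does not justify fully constructive.
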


\begin{proof}[Proof of Proposition~\ref{propositionSmoothExtension}]
Given \(0 < \underline\delta < \delta < \overline\delta < \mu\), let \(\varphi : K^m \to [0, 1 ]\) be a continuous function such that
\begin{itemize}[\(-\)]
\item for every \(x \in K^m \setminus (T^{\ell^*} + Q_{\overline\delta\eta}^m)\), \(\varphi(x) = 0\),
\item for every \(x \in \partial(T^{\ell^*} + Q_{\delta\eta}^m)\), \(\varphi(x) = 1\).
\item for every \(x \in T^{\ell^*} + Q_{\underline\delta \eta}^m\), \(\varphi(x) = 0\).
\end{itemize}
We define \(w : K^m \to N^n\) by
\[
w(x) = 
\begin{cases}
(u \circ H_\ell)(\varphi(x), x)		& \text{if \(x \in K^m \setminus (T^{\ell^*} + Q_{\delta\eta}^m)\)
},\\
(f \circ H_\ell)(\varphi(x), x)		& \text{if \(x \in (T^{\ell^*} + Q_{\delta\eta}^m) \setminus T^{\ell^*}\)},\\
f(x)		& \text{if \(x \in T^{\ell^*}\)}.
\end{cases}
\]
By properties~\((\ref{item15431})\) and \((\ref{item15432})\) of Fact~\ref{factHomotopyRetraction}, \(w\) is well-defined and continuous on \(K^m\), and \(w = u\) on \(K^m \setminus (T^{\ell^*} + Q_{\overline\delta\eta}^m)\). Let \(\overline w : \R^m \to \R^\nu\) be a continuous extension of \(w\). Given a mollifier \(\varphi \in C_c^\infty(B_1^m)\), there exists a nonnegative function \(\psi \in C^\infty(\R^m)\) such that for any \(\iota > 0\),
\begin{itemize}[\(-\)]
\item \(\supp{\psi} \subset T^{\ell^*} + Q_{\mu\eta}^m\),
\item \(\psi > 0\) in a neighborhood of \(T^{\ell^*} + Q_{\overline\delta\eta}^m\),
\item \(\norm{\varphi_\psi \ast \overline w - \overline w}_{L^\infty(\R^m)} \le \iota\).
\end{itemize}
If the nearest point projection \(\Pi\) onto \(N^n\) is well-defined and  smooth on  a tubular neighborhood of \(N^n\) of radius \(\iota > 0\), then the map \(\Pi \circ (\varphi_\psi \ast \overline w)\) restricted to \(K^m\) satisfies all the required properties.
\end{proof}

The natural question that arises is whether a continuous extension of \(u|_{K^\ell}\) to \(K^m\) exists. This property depends on the skeleton \(\cK^m\) and on the manifold \(N^n\).

\begin{proposition}
\label{propositionContinuousExtensionProperty}
Let \(\cK^m\) be a skeleton of radius \(\eta > 0\) and \(\ell \in \{0, \ldots, m-1\}\).
If \(K^m\) is a cube and if \(\pi_\ell(N^n)  \simeq \{0\}\), then for every \(u \in C^{0}(K^\ell; N^n)\) there exists \(f \in C^{0}(K^m; N^n)\) such that \(f|_{K^\ell}=u\).
\end{proposition}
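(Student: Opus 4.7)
My plan is to extend \(u\) by ascending the cellular filtration \(K^\ell \subset K^{\ell+1} \subset \cdots \subset K^m\) of the single cube \(K^m\), producing at each stage a continuous extension to the next skeleton.

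For the first step, from \(K^\ell\) to \(K^{\ell+1}\), I would use the hypothesis \(\pi_\ell(N^n) = \{0\}\) directly. Each \((\ell+1)\)-face \(\sigma^{\ell+1}\) of \(K^m\) is a closed \((\ell+1)\)-cube whose boundary \(\partial \sigma^{\ell+1}\) lies in \(K^\ell\), so \(u|_{\partial \sigma^{\ell+1}}\) is a continuous map from a topological \(\ell\)-sphere into \(N^n\); it is null-homotopic, and a null-homotopy extends it continuously over the interior of \(\sigma^{\ell+1}\). Performing this on every \((\ell+1)\)-face produces a continuous extension \(u^{\ell+1} \in C^0(K^{\ell+1}; N^n)\) of \(u\).

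For the inductive step, from \(K^j\) to \(K^{j+1}\) with \(j \ge \ell+1\), a direct null-homotopy argument would need \(\pi_j(N^n) = \{0\}\), which is not available. Instead I would invoke cellular obstruction theory: given a continuous extension \(u^j\) of \(u\) defined on \(K^j\), the obstruction to extending across each \((j+1)\)-face assembles into a cellular cocycle \(o(u^j) \in C^{j+1}(K^m, K^\ell; \pi_j(N^n))\), and one may modify \(u^j\) on \(j\)-cells rel \(K^\ell\) so as to kill this cocycle provided the cohomology class \([o(u^j)] \in H^{j+1}(K^m, K^\ell; \pi_j(N^n))\) vanishes. The key observation is that this class always vanishes when \(j \ge \ell+1\): contractibility of the cube \(K^m\) and the long exact sequence of the pair \((K^m, K^\ell)\) give an isomorphism
\[
H^{j+1}(K^m, K^\ell; G) \cong \widetilde H^j(K^\ell; G)
\]
for every abelian group \(G\), and the right-hand side vanishes because \(K^\ell\) is an \(\ell\)-dimensional CW complex and \(j > \ell\). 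Iterating from \(j = \ell+1\) up to \(j = m-1\) then produces the desired \(f \in C^0(K^m; N^n)\).

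The main obstacle to making the proof rigorous will be carrying out the modification step concretely: once \(o(u^j) = \delta \beta\) is written with \(\beta \in C^j(K^m, K^\ell; \pi_j(N^n))\), one must alter \(u^j\) inside each \(j\)-cell \(\tau\) by a relative homotopy realizing \(-\beta(\tau) \in \pi_j(N^n)\), concretely by pinching off a small ball in the interior of \(\tau\) and attaching a representative sphere of class \(-\beta(\tau)\). Verifying that this combinatorial surgery preserves the restriction to \(K^\ell\) and produces a continuous map whose new obstruction cocycle vanishes is standard but requires some care; it is the only point at which routine technical checking is needed before the cohomological vanishing concludes the induction.
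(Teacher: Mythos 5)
Your approach is correct in spirit but takes a genuinely different route from the paper. After the first step (identical to the paper's: extend \(u\) cell-by-cell from \(K^\ell\) to \(K^{\ell+1}\) using \(\pi_\ell(N^n)=\{0\}\)), the paper invokes a single geometric extension lemma, Proposition~\ref{lemmaContinuousExtensionProperty}: any \(v \in C^0(K^{\ell+1};N^n)\) has an extension \(g \in C^0(K^m;N^n)\) with \(g|_{K^\ell}=v|_{K^\ell}\), given by the explicit formula \(g(x)=v\bigl(G_\ell(\varphi(x),H_\ell(1,x))\bigr)\) assembled from the deformation retraction \(H_\ell\) of \(K^m\setminus T^{\ell^*}\) onto \(K^\ell\) and the contraction \(G_\ell\) of \(K^\ell\) to a point inside \(K^{\ell+1}\). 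This uses only the contractibility of the cube, imposes no further conditions on \(N^n\), and does not induct over intermediate skeleta. Your obstruction-theoretic induction reaches the same conclusion, and your cohomological computation \(H^{j+1}(K^m,K^\ell;G)\cong\widetilde H^j(K^\ell;G)=0\) for \(j>\ell\) is correct, but it brings in cellular cochain machinery and a cell-by-cell surgery that the closed-form construction sidesteps entirely.

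There is, however, one genuine gap in your argument as written. When \(\ell=0\), the first inductive step \(j=1\) concerns the obstruction to extending across \(2\)-cells, and the would-be coefficient group is \(\pi_1(N^n)\), which need not be abelian. In that case the cochain group \(C^2(K^m,K^0;\pi_1(N^n))\) and the class \([o(u^1)]\) are not defined, so the cocycle/coboundary mechanism you rely on fails at precisely that step. This can be patched by an ad hoc argument (for instance, choose a spanning tree of \(K^1\) and redefine \(u^1\) on the complementary edges so that every \(2\)-cell boundary maps null-homotopically), but as stated your proof does not cover the case \(\ell = 0\) with non-abelian \(\pi_1(N^n)\). The paper's geometric construction handles all \(\ell\) uniformly with no extra work, which is one of its advantages over the obstruction-theoretic route.
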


We will use the fact that it is always possible to find a continuous extension, regardless of \(N^n\), by losing one dimension.  This property has been introduced as the \(\ell\) extension property by Hang and Lin \cite[Definition~2.3]{Hang-Lin}.

\begin{proposition}
\label{lemmaContinuousExtensionProperty}
Let \(\cK^m\) be a skeleton of radius \(\eta > 0\) and \(\ell \in \{0, \ldots, m-1\}\).
If \(K^m\) is a cube, then for every \(u \in C^{0}(K^{\ell+1}; N^n)\), there exists \(g\in C^{0}(K^m; N^n)\) such that \(g|_{K^{\ell}}= u|_{K^{\ell}}\).
\end{proposition}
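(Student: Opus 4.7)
The plan is to reduce the extension problem to a nullhomotopy condition. The cube $K^m$ is contractible, and the inclusion $K^\ell \hookrightarrow K^m$ satisfies the homotopy extension property with respect to the cubical structure of $\cK^m$. A standard consequence is that a continuous map $f : K^\ell \to N^n$ extends to $K^m$ if and only if $f$ is nullhomotopic in $N^n$. It therefore suffices to show that $u|_{K^\ell}$ is nullhomotopic.

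To obtain such a nullhomotopy, I factor $u|_{K^\ell}$ as the composition of the inclusion $j : K^\ell \hookrightarrow K^{\ell+1}$ with $u$, and show that $j$ itself is nullhomotopic in $K^{\ell+1}$. The key claim is that $K^{\ell+1}$ is $\ell$-connected. Indeed, $K^m$ is built from $K^{\ell+1}$ by successively attaching the cells of $\cK^m$ of dimensions $\ell+2, \ldots, m$, and by cellular approximation such attachments do not alter $\pi_i$ for $i \leq \ell$ (attaching a $d$-cell preserves $\pi_i$ for $i \leq d-2$, and here $i \leq \ell \leq d-2$ whenever $d \geq \ell+2$). Combined with the contractibility of $K^m$, this forces $\pi_i(K^{\ell+1}) = 0$ for all $i \leq \ell$.

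Since $K^\ell$ is an $\ell$-dimensional CW complex and $K^{\ell+1}$ is $\ell$-connected, the inclusion $j$ is nullhomotopic: fix a basepoint $p \in K^{\ell+1}$ and construct $H : K^\ell \times [0,1] \to K^{\ell+1}$ satisfying $H(\cdot, 0) = j$ and $H(\cdot, 1) \equiv p$ inductively on the skeleta of $K^\ell$. The obstruction to extending $H$ over each $i$-cell of $K^\ell$ is a map $S^i \to K^{\ell+1}$ that must be nullhomotopic, which holds because $\pi_i(K^{\ell+1}) = 0$ for $i = 0, 1, \ldots, \ell$. Composing $H$ with $u$ produces a nullhomotopy of $u|_{K^\ell}$ in $N^n$, and the homotopy extension property for the cofibration $K^\ell \hookrightarrow K^m$ then yields the desired $g \in C^0(K^m; N^n)$ with $g|_{K^\ell} = u|_{K^\ell}$. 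The main technical point is the $\ell$-connectedness of $K^{\ell+1}$ together with the cell-by-cell construction of the nullhomotopy; both are routine applications of cellular homotopy theory to the cubical complex $\cK^m$.
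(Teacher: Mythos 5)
Your proof is correct, and it takes a genuinely different route from the paper's. The paper constructs the extension $g$ explicitly: it combines the radial retraction $H_\ell$ of $K^m \setminus T^{\ell^*}$ onto $K^\ell$ (Fact~\ref{factHomotopyRetraction}) with a nullhomotopy $G_\ell$ of the inclusion $K^\ell \hookrightarrow K^{\ell+1}$ (Fact~\ref{factHomotopyConstant}), glued via a transition function $\varphi$ that is $0$ on $K^\ell$ and $1$ on the dual skeleton $T^{\ell^*}$; the formula is $g(x) = u\bigl(G_\ell(\varphi(x), H_\ell(1,x))\bigr)$ away from $T^{\ell^*}$ and $g \equiv u(a)$ on $T^{\ell^*}$. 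You bypass the dual skeleton and the retraction $H_\ell$ entirely: you establish the crucial nullhomotopy of $K^\ell \hookrightarrow K^{\ell+1}$ by deriving the $\ell$-connectedness of $K^{\ell+1}$ from the contractibility of $K^m$ (attaching cells of dimension $\geq \ell+2$ to $K^{\ell+1}$ yields $K^m$, and such attachments are isomorphisms on $\pi_i$ for $i\le\ell$), then extend the resulting nullhomotopy of $u|_{K^\ell}$ over $K^m$ via the homotopy extension property for the CW pair $(K^m,K^\ell)$. Both arguments hinge on the same topological fact, which the paper records without proof as Fact~\ref{factHomotopyConstant}; your argument supplies a proof of this fact through standard cellular homotopy theory, and the HEP step replaces the paper's concrete construction of $g$. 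The paper's approach has the minor advantage of producing an explicit formula for $g$ that the authors can reuse (the same pair of devices $H_\ell$ and $\varphi$ appears again in the proof of Proposition~\ref{propositionSmoothExtension}); your approach is more economical in hypotheses and unpacks the unproved Facts into visible steps.
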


In the proof of Proposition~\ref{lemmaContinuousExtensionProperty}, we shall assume that if \(K^m\) is a cube, then the identity map on \(K^{\ell}\) is homotopic to a constant with respect to \(K^{\ell+1}\):

\begin{fact}
\label{factHomotopyConstant}
If \(K^m\) is a cube, then there exists a continuous homotopy \(G_\ell : [0, 1] \times K^{\ell} \to K^{\ell+1}\) such that 
\begin{enumerate}[$(i)$]
\item for every \(x \in K^{\ell}\), \(G_\ell(0, x) = x\),
\item there exists \(a \in K^{\ell}\) such that for every \(x \in K^{\ell}\), \(G_\ell(1, x) = a\).
\end{enumerate}
\end{fact}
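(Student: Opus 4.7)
The plan is to contract $K^\ell$ to a vertex of the cube by pushing each coordinate of $\R^m$, one at a time, to a prescribed endpoint of its range, arranging the concatenation so that the trajectories never leave $K^{\ell+1}$. After an affine change of coordinates I may assume $K^m = [-1, 1]^m$, and I take $a = (1, 1, \ldots, 1)$, a vertex of the cube that automatically lies in $K^0 \subset K^\ell$.

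For each $i \in \{1, \ldots, m\}$ I will use the one-parameter family of affine maps $F_i(t, \cdot) : K^m \to K^m$ defined by $F_i(t, x) = (x_1, \ldots, x_{i-1},\, (1-t) x_i + t,\, x_{i+1}, \ldots, x_m)$, which linearly interpolates the identity at $t = 0$ to the projection pinning the $i$-th coordinate at $+1$ at $t = 1$. The homotopy $G_\ell$ will then be obtained by concatenating $F_1, F_2, \ldots, F_m$ at constant speed on the $m$ successive subintervals $\bigl[\tfrac{i-1}{m}, \tfrac{i}{m}\bigr]$ of $[0,1]$. Continuity at each seam $s = i/m$ is immediate because $F_{i+1}(0, \cdot)$ is the identity, and once all $m$ steps have been performed every coordinate is pinned to $+1$, giving $G_\ell(1, \cdot) \equiv a$.

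The main step, and in my view the only real difficulty, will be to verify that $G_\ell$ actually takes values in $K^{\ell+1}$ and not merely in $K^m$. This rests on a careful dimension bookkeeping: for $y \in K^m$ sitting in a face of dimension $d$, i.e., having $m - d$ coordinates in $\{-1, 1\}$ and $d$ coordinates in $(-1, 1)$, the path $t \mapsto F_i(t, y)$ enters a face of dimension $d + 1$ exactly when $y_i = -1$, in which case the coordinate passes through $(-1, 1)$ and returns to $+1$ at $t = 1$ so that the endpoint is again in a face of dimension $d$; when $y_i$ was already free the path stays in its face for $t < 1$ and drops to dimension $d - 1$ at $t = 1$; when $y_i = +1$ the path is constant. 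In every case a step raises the face dimension by at most one and does not raise it at the endpoint. Since $G_\ell(0, x)$ lies in a face of dimension at most $\ell$, an induction on $i$ will show that $G_\ell(i/m, x)$ remains in a face of dimension at most $\ell$ while the intermediate points lie in faces of dimension at most $\ell + 1$; the observation preventing the transient dimension bumps from accumulating across the $m$ steps is that once a coordinate has been pinned to $+1$ it is never perturbed by any later $F_j$, so the bumps never overlap in time.
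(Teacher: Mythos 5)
Your construction---pinning coordinates one at a time to $+1$ and concatenating the pinning homotopies---is the right idea, and your observation that later pinning steps never perturb an already-pinned coordinate is exactly what prevents the transient dimension bumps from accumulating. The issue is that your dimension bookkeeping is carried out for the wrong skeleton: in this paper, $\cK^m$ is a cubication of the cube $K^m$ into small closed cubes of radius $\eta$, and $K^\ell$ denotes the union of all $\ell$-dimensional faces of all cubes of that cubication, not the $\ell$-skeleton of $[-1,1]^m$ viewed as a single polytope. After your affine normalization, a point $y$ lies in $K^d$ if and only if at least $m-d$ of its coordinates lie on the (rescaled) grid, not if and only if they lie in $\{-1,+1\}$. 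Consequently the transient dimension bump under $F_i$ occurs whenever $y_i$ is a grid value other than $+1$ (not only when $y_i = -1$), and a free coordinate moving towards $+1$ passes through several intermediate grid values (causing transient dimension dips, not staying inside one open cell as you assert).

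Fortunately your argument survives this reinterpretation essentially unchanged: let $g(z)$ be the number of coordinates of $z$ that are not grid values, so that $z \in K^d$ if and only if $g(z) \le d$. Since $F_i(t,\cdot)$ changes only the $i$-th coordinate, $g(F_i(t,y)) \le g(y) + 1$ for every $t \in [0,1]$; and since the $i$-th coordinate of $F_i(1,y)$ is $+1$, which is a grid value, $g(F_i(1,y)) \le g(y)$. Starting from $g(x) \le \ell$ and inducting over the $m$ pinning steps gives $g(G_\ell(i/m, x)) \le \ell$ for every $i$ and $g(G_\ell(t,x)) \le \ell + 1$ for every $t$, so $G_\ell$ does map into $K^{\ell+1}$. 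You should rewrite the face-dimension bookkeeping in those terms. (For reference, the paper states this fact without a proof, so there is no in-text argument to compare against.)
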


\begin{proof}[Proof of Proposition~\ref{lemmaContinuousExtensionProperty}]
Let \(\varphi : K^m \to [0, 1]\) be a continuous function such that
\begin{itemize}[\(-\)]
\item for every \(x \in K^{\ell}\), \(\varphi(x) = 0\),
\item for every \(x \in T^{\ell^*}\), \(\varphi(x) = 1\).
\end{itemize}
We define \(g : K^m \to N^n\) by
\[
g(x) = 
  \begin{cases}
    u \bigl(G_{\ell}(\varphi(x), H_{\ell}(1, x))\bigr) & \text{if \(x \in K^m \setminus T^{\ell^*}\),}\\
    u(a) & \text{if \(x \in T^{\ell^*}\).}
  \end{cases}
\]
where \(H_{\ell} : [0, 1] \times K^m \setminus T^{\ell^*} \to K^m \setminus T^{\ell^*}\) is the homotopy retraction of Fact~\ref{factHomotopyRetraction}.
The map \(g\) is continuous and by property~\((\ref{item15433})\) of Fact~\ref{factHomotopyRetraction} we have for every \(x \in K^{\ell}\), \(g(x) = u(x)\).
\end{proof}

\begin{proof}[Proof of Proposition~\ref{propositionContinuousExtensionProperty}]
Let \(u \in C^{0}(K^\ell; N^n)\). 
Since \(\pi_\ell(N^n) \simeq \{0\}\), for every \(\sigma^{\ell + 1} \in \cK^{\ell+1}\), the restriction \(u|_{\partial\sigma^{\ell + 1}}\) has a continuous extension \(u_{\sigma^{\ell + 1}}\) to \(\sigma^{\ell + 1}\). 
Let \(v : K^{\ell + 1} \to N^n\) be the map defined for every \(x \in K^{\ell + 1}\) by \(v(x) = u_{\sigma^{\ell + 1}}(x)\), where \(\sigma^{\ell + 1} \in \cK^{\ell + 1}\) is  such that \(x \in \sigma^{\ell + 1}\). 
The map \(v\) is well-defined and continuous; moreover, \(v|_{K^{\ell}} = u\). 
By Proposition~\ref{lemmaContinuousExtensionProperty} applied to \(v\),  there exists \(f : K^m \to N^n\) such that \(f|_{K^{\ell}} = v|_{K^{\ell}}\); hence \(f\) is a continuous extension of \(u\) to \(K^m\). 
\end{proof}


\section{Shrinking}
Given a map \(u \in W^{k, p} (K^m; \R^\nu)\) whose energy is controlled outside a neighborhood of the dual skeleton \(T^{\ell^*}\), we construct for every \(\tau > 0\) a map \(u \circ \Phi\) whose energy will be controlled on the whole \(K^m\) when \(\tau\) is small enough.
This shrinking construction is very similar to the thickening construction. 
In both cases, the dimension of the dual skeleton \(T^{\ell^*}\) must satisfy \(\ell^* < m - kp\), or equivalently, \(l+1 > kp\).
The main differences are that shrinking only acts in a neighborhood of the dual skeleton \(T^{\ell^*}\) and does not create singularities.  Shrinking can be thought of as desingularized thickening and requires more careful estimates.

As for thickening, we begin by constructing the diffeomorphism \(\Phi\) regardless of \(u\):

\begin{proposition}
\label{lemmaThickeningFaceNearDualSkeletonGlobal}
Let \(\ell \in \{0, \dotsc, m-1\}\), \(\eta > 0\), \(0 < \mu <\frac{1}{2}\), \(0 < \tau < \frac{1}{2}\), \(\mathcal{S}^m\) be a cubication of \(\R^m\)  of radius \(\eta\) and \(\cT^{\ell^*}\) be the dual skeleton of \(\cS^\ell\). There exists a smooth map \(\Phi : \R^m  \to \R^m\) such that 
\begin{enumerate}[$(i)$]
\item \(\Phi\) is injective,
\label{itempropositionshrinkingfromaskeleton15}
\item 
for every \(\sigma^m \in \cS^m\),
\(\Phi(\sigma^m)\subset \sigma^m\),
\label{itempropositionshrinkingfromaskeleton2}
\item \(\Supp{\Phi}\subset T^{\ell^*} + Q^m_{2\mu\eta}\)
and \(\Phi\big(T^{\ell^*} + Q^m_{\tau\mu\eta}\big) \supset T^{\ell^*} + Q^m_{\mu\eta}\),
\label{itempropositionshrinkingfromaskeleton1}
\item for every \(0 < \beta < \ell + 1\), for every \(j \in \N_*\) and for every \(x\in \R^m\), 
\[
(\mu\eta)^{j-1}\abs{D^j \Phi(x)} \le  C \bigl(\jac{\Phi}(x)\bigr)^\frac{j}{\beta},
\]
for some constant \(C > 0\) depending on \(\beta\), \(j\) and \(m\),
\label{itempropositionshrinkingfromaskeleton5}
\item for every \(0 < \beta < \ell + 1\), for every \(j \in \N_*\) and for every \(x\in \Phi^{-1}(T^{\ell^*} + Q^m_{\mu\eta})\), 
\[
(\mu\eta)^{j-1}\abs{D^j \Phi(x)} \le  C' \tau^{j(\frac{\ell + 1}{\beta} - 1)} \bigl(\jac{\Phi}(x)\bigr)^\frac{j}{\beta},
\]
for some constant \(C' > 0\) depending on \(\beta\), \(j\) and \(m\).
\label{itempropositionshrinkingfromaskeleton4}
\end{enumerate}
\end{proposition}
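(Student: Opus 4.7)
The plan is to mirror the thickening proof of Proposition~\ref{propositionthickeningfromaskeleton}, but directing the action inward toward the dual skeleton $\cT^{\ell^*}$ rather than outward from the primal one. Since no singularity is created, the single-face building block will be a smooth diffeomorphism analogous to Lemma~\ref{lemmaThickeningAroundPrimalSqueleton}, acting in the $\ell+1$ directions transverse to each dual face, and the global $\Phi$ will be obtained by composing these building blocks inductively over the face structure of $\cT^{\ell^*}$.

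For the single-face block, I fix a reference face $T = \{0^{\ell+1}\} \times Q^{\ell^*}_{\eta}$ and seek a diffeomorphism of the form $\Psi(x', x'') = (\lambda(x) x', x'')$ with $\lambda \ge 1$, $\lambda \equiv 1/\tau$ on $Q^{\ell+1}_{\tau\mu\eta} \times Q^{\ell^*}_{\rho\eta}$, and $\lambda \equiv 1$ outside $Q^{\ell+1}_{2\mu\eta} \times Q^{\ell^*}_{\rho\eta}$. Take $\lambda$ radial in $\abs{x'}$ through a function $\lambda_\ast$ such that $s \mapsto s\lambda_\ast(s)$ is smooth, strictly increases from $\mu\eta$ at $s = \tau\mu\eta$ to $2\mu\eta$ at $s = 2\mu\eta$, and has $(s\lambda_\ast)'$ bounded below by a positive constant. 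This monotonicity is the dual analogue of the one in Lemma~\ref{lemmaThickeningAroundPrimalSqueleton} and yields injectivity of $\Psi$, the Jacobian formula $\jac \Psi = \lambda^\ell (s\lambda_\ast)'(\abs{x'})$ giving $\jac \Psi = \tau^{-(\ell+1)}$ on the core and $\jac \Psi \gtrsim 1$ elsewhere, and the pointwise derivative bounds $(\mu\eta)^{j-1}\abs{D^j\Psi} \lesssim \lambda$.

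The global map $\Phi$ is assembled by induction on $i \in \{0, \ldots, \ell^*\}$: at step $i$, the previous map is composed with isometric rescalings of the single-face block attached to each new $i$-face of $\cT^{\ell^*}$. The support condition $\mu < 1/2$ ensures that the blocks attached to distinct $i$-faces act on disjoint regions, and the cube-preservation property $(\ref{itempropositionshrinkingfromaskeleton2})$ is maintained because every $\Psi$ stabilizes the primal cube containing its face. Property $(\ref{itempropositionshrinkingfromaskeleton1})$ is tracked through the images of the cores.

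The main obstacle is to establish the Jacobian-derivative bounds $(\ref{itempropositionshrinkingfromaskeleton5})$ and especially $(\ref{itempropositionshrinkingfromaskeleton4})$, with the precise $\tau$-dependent prefactor in the latter. By injectivity and $(\ref{itempropositionshrinkingfromaskeleton1})$, the preimage $\Phi^{-1}(T^{\ell^*} + Q^m_{\mu\eta})$ is contained in $T^{\ell^*} + Q^m_{\tau\mu\eta}$, which coincides with the union of the cores; there $\Phi$ reduces locally to the linear dilation $(x', x'') \mapsto (x'/\tau, x'')$, for which $\abs{D\Phi} = 1/\tau$, $D^j\Phi = 0$ for $j \ge 2$, and $\jac \Phi = \tau^{-(\ell+1)}$, so $(\ref{itempropositionshrinkingfromaskeleton4})$ reduces to the sharp inequality $1/\tau \le C'\tau^{-j}$, an equality at $j = 1$ and trivial for $j \ge 2$. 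Extending $(\ref{itempropositionshrinkingfromaskeleton5})$ to the full range $\beta < \ell + 1$ requires the same chain-rule bookkeeping as in properties \eqref{item1245}--\eqref{item1246} of Proposition~\ref{propositionthickeningfromaskeleton}: the single-face estimates alone yield only $\beta \le \ell$ in the transition annuli, but the inductive composition compensates because each new block multiplies $\jac \Phi$ by $\lambda^{\ell+1}$ while increasing $\abs{D^j\Phi}$ by at most $\lambda^r/(\mu\eta)^{j-1}$ with $r \le j$, and these factors compound so that $(\mu\eta)^{j-1}\abs{D^j\Phi}/(\jac \Phi)^{j/\beta}$ stays bounded for every $\beta < \ell + 1$.
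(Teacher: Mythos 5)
Your global architecture is the same as the paper's: you build \(\Phi\) by an induction over the (dual) face structure, using for each face an isometric copy of a single-face radial dilation \(\Psi(x', x'') = (\lambda(x)x', x'')\) supported in a tube around that face. The inclusion, support and injectivity bookkeeping you outline are fine. The gap is in the single-face building block itself, and the repair you sketch at the end does not actually close it.

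You specify \(\lambda_\ast\) so that \(s \mapsto s\lambda_\ast(s)\) increases from \(\mu\eta\) to \(2\mu\eta\) over the transition annulus \([\tau\mu\eta, 2\mu\eta]\) with \((s\lambda_\ast)'\) bounded below by some constant \(c\). Since the annulus has length roughly \(2\mu\eta\) but the function only gains \(\mu\eta\), necessarily \(c = O(1)\), uniformly in \(\tau\). Writing \(L = \ell+1\) for the codimension in which the block acts, the Jacobian at a point with \(\abs{x'} = s\) is \(\jac\Psi = \lambda_\ast(s)^{L-1}(s\lambda_\ast)'(s) \approx c\,\lambda_\ast(s)^{L-1}\); the missing factor of \(\lambda_\ast\) is not recovered because \((s\lambda_\ast)'\) is only \(O(1)\), not comparable to \(\lambda_\ast\). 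Meanwhile \(\abs{D\Psi} \approx \lambda_\ast(s)\). Property~\((\ref{itempropositionshrinkingfromaskeleton5})\) with \(j=1\) requires \(\lambda_\ast \lesssim (\jac\Psi)^{1/\beta}\), i.e.\ \(\lambda_\ast^{1-(L-1)/\beta} \lesssim 1\) uniformly as \(\lambda_\ast \to 1/\tau \to \infty\); this holds only when \(\beta \le L-1\). Your block therefore delivers \(\beta \le L-1\) on the annulus, not the open range \(\beta < L\) demanded by the statement.

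Your proposed fix --- that the inductive composition "compensates" --- is incorrect. Take the final step: \(\Phi_\ell = \Psi_{\ell+1}\circ\Phi_{\ell+1}\), with \(\Psi_{\ell+1}\) the block of codimension \(L = \ell+1\). At a point in the transition annulus of \(\Psi_{\ell+1}\) where \(\Phi_{\ell+1}\) acts as the identity, both \(D^j\Phi_\ell\) and \(\jac\Phi_\ell\) coincide with those of \(\Psi_{\ell+1}\), so the composite inherits exactly \(\Psi_{\ell+1}\)'s range \(\beta\le\ell\) there and nothing more. In Corollary~\ref{corollaryShrinkingWkpEstimate} one takes \(\beta = jp\) with \(j\le k\), and when \(kp\notin\N\) one has \(\ell = \lfloor kp\rfloor < kp < \ell+1\), so the needed value \(\beta = kp\) lies in the lost window \((\ell,\ell+1)\) and the \(W^{k,p}\) bound fails.

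This is precisely why the paper's single-face lemma (Lemma~\ref{lemmaThickeningAroundDualSqueletonmu}, paralleling Lemma~\ref{lemmaThickeningAroundDualSqueleton} for thickening) does not use a plain cutoff between \(1/\tau\) and \(1\), but the logarithmically corrected profile \(\varphi(s) \sim \tfrac{1}{s}\bigl(1 + \tfrac{b}{\ln(1/s)}\bigr)\) for small \(s\). This makes \((s\varphi(s))' \sim \tfrac{b}{s\ln^2(1/s)}\), whence \(s\varphi^{(1)}(s) + \varphi(s) \ge c_\alpha s^{-\alpha}\) for every \(\alpha < 1\), and hence \(\jac\Psi \gtrsim \zeta^{-(L-1+\alpha)}\): the extra decay \(\zeta^{-\alpha}\), which your constant-slope transition throws away, is exactly what pushes the admissible range from the closed value \(L-1\) up to the full open interval \(0 < \beta < L\).
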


As a consequence of the estimates of Proposition~\ref{lemmaThickeningFaceNearDualSkeletonGlobal}, we have the following \(W^{k, p}\) estimates that will be applied in the proof of Theorem~\ref{theoremDensityManifoldMain} with \(\ell = \floor{kp}\). 

\begin{corollary}
\label{corollaryShrinkingWkpEstimate}
Let \(\Phi : \R^m \to \R^m\) be the map given by Proposition~\ref{lemmaThickeningFaceNearDualSkeletonGlobal} and let \(\mathcal{K}^m\) be a subskeleton of \(\cS^m\). If  \(\ell+1 > kp\), then for every \(u \in W^{k, p}(K^m \cap(T^{\ell^*} + Q^m_{2\mu\eta}); \R^\nu)\), \(u \circ \Phi \in W^{k, p}(K^m \cap(T^{\ell^*} + Q^m_{2\mu\eta}); \R^\nu)\) and for every \(j \in \{1, \dotsc, k\}\),
\begin{multline*}
(\mu \eta)^{j} \norm{D^j(u \circ \Phi)}_{L^p(K^m \cap (T^{\ell^*} + Q^m_{2\mu\eta}))}\\
\begin{aligned}
& \leq 
 C'' \sum_{i=1}^j  (\mu\eta)^{i} \norm{D^i u}_{L^p(K^m \cap (T^{\ell^*} + Q^m_{2\mu\eta}) \setminus (T^{\ell^*} + Q^m_{\mu\eta}))}\\
& \qquad 
+ C'' \tau^{\frac{\ell+1-kp}{p}}\sum_{i=1}^j  (\mu\eta)^{i} \norm{D^i u}_{L^p(K^m \cap (T^{\ell^*} + Q^m_{\mu\eta}))},
\end{aligned}
\end{multline*}
for some constant \(C'' > 0\) depending on \(m\), \(k\) and \(p\).
\end{corollary}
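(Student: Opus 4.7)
My plan is to adapt the proof of Corollary~\ref{corollaryEstimateThickening} to exploit the two Jacobian estimates \((iv)\) and \((v)\) of Proposition~\ref{lemmaThickeningFaceNearDualSkeletonGlobal}: the uniform bound holds on all of \(\R^m\), while the sharper one, carrying an extra factor \(\tau^{j((\ell+1)/\beta - 1)}\), holds on \(\Phi^{-1}(T^{\ell^*} + Q^m_{\mu\eta})\). The two summands on the right-hand side of the claim should correspond precisely to these two regions of \(K^m \cap (T^{\ell^*} + Q^m_{2\mu\eta})\), namely \(\Phi^{-1}(T^{\ell^*} + Q^m_{\mu\eta})\) and its complement, with the \(\tau\)-gain attached only to the former.

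For \(u \in C^\infty\), I would first apply the chain rule for higher-order derivatives to bound \(\abs{D^j(u \circ \Phi)(x)}^p\) by a sum over multi-indices \(1 \le t_1 \le \dotsc \le t_i\) with \(t_1 + \dotsb + t_i = j\) of terms \(\abs{D^i u(\Phi(x))}^p \prod_{s=1}^i \abs{D^{t_s} \Phi(x)}^p\). Since \(jp \le kp < \ell+1\), the choice \(\beta = jp\) is admissible in both \((iv)\) and \((v)\); the \(\jac \Phi\) exponents telescope to \((\jac \Phi)^{(\sum t_s)/j} = \jac \Phi\), the \(\mu\eta\) factors collect to \((\mu\eta)^{-p(j-i)}\), and on the shrinking region the \(\tau\) exponents collapse to \(\tau^{\ell+1-jp}\) because \(\sum t_s = j\). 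A change of variables \(y = \Phi(x)\), licit by injectivity and smoothness of \(\Phi\) together with the area formula, then converts the integrals of \(\abs{(D^i u) \circ \Phi}^p \jac \Phi\) into integrals of \(\abs{D^i u}^p\) over the corresponding image sets; property \((iii)\) of Proposition~\ref{lemmaThickeningFaceNearDualSkeletonGlobal} combined with injectivity identifies these image sets as \(K^m \cap (T^{\ell^*} + Q^m_{\mu\eta})\) and \(K^m \cap (T^{\ell^*} + Q^m_{2\mu\eta}) \setminus (T^{\ell^*} + Q^m_{\mu\eta})\), which is exactly what is wanted.

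Taking \(p\)-th roots yields the desired bound but with \(\tau^{(\ell+1-jp)/p}\); since \(j \le k\) and \(\tau \in (0, 1/2)\), this quantity is dominated by \(\tau^{(\ell+1-kp)/p}\), producing the uniform factor in the statement. The extension from smooth \(u\) to \(u \in W^{k,p}\) follows by density, using the global \(L^p\) estimate that the same calculation yields together with the continuity of composition. The step I expect to require the most care is the clean identification of image and pre-image regions: property \((iii)\) forces \(\Phi^{-1}(T^{\ell^*} + Q^m_{\mu\eta}) \subset T^{\ell^*} + Q^m_{\tau\mu\eta}\) by injectivity, which is the geometric fact that powers the \(\tau\) gain, but the matching between this pre-image and the annular complement needed to avoid double-counting is the crux of extracting the two separate norms on the right-hand side.
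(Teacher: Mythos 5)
Your proposal is correct and follows essentially the same route as the paper's own proof: the same chain-rule expansion with $\beta=jp$, the same split of the integration domain according to whether $x$ lies in $\Phi^{-1}(T^{\ell^*}+Q^m_{\mu\eta})$, with property $(iv)$ used on the outer part and property $(v)$ supplying the $\tau$-gain on the inner part, followed by the change of variables via injectivity and the area formula, and density to pass from smooth to $W^{k,p}$ maps. The closing worry about carefully matching images and pre-images is a non-issue once one notes that, by injectivity and $\Supp\Phi\subset T^{\ell^*}+Q^m_{2\mu\eta}$ together with $\Phi(\sigma^m)\subset\sigma^m$, the two pieces map into disjoint subsets of $K^m\cap(T^{\ell^*}+Q^m_{2\mu\eta})$ exactly as the right-hand side requires.
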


\begin{proof}
We first establish the estimate for a map \(u\) in \(C^{\infty}(K^m \cap(T^{\ell^*} + Q^m_{2\mu\eta}) ; \R^{\nu})\). By the chain rule for higher-order derivatives, for every \(j \in \{1, \dotsc, k\}\) and for every \(x \in K^m\),
\[
\abs{D^j (u \circ \Phi) (x)}^p \le \NewConstant \sum_{i=1}^j \sum_{\substack{1 \le t_1 \le \dotsc \le t_i\\ t_1 + \dotsb + t_i = j}} \abs{D^i u(\Phi(x))}^p \abs{D^{t_1} \Phi(x)}^p \dotsm \abs{D^{t_i} \Phi(x)}^p.
\]

As in the proof of Corollary~\ref{corollaryEstimateThickening}, if \(1 \le t_1 \le \ldots \le t_i\) and \(t_1 + \dotsb + t_i = j \), then for every \(x \in K^m \cap(T^{\ell^*} + Q^m_{2\mu\eta})\),
\[
\abs{D^{t_1} \Phi(x)}^p \dotsm \abs{D^{t_i}\Phi(x)}^p \le \Constant \frac{\jac{\Phi}(x)}{\eta^{(j-i)p}}
\]
and this implies
\[
\eta^{jp}\abs{D^j (u \circ \Phi) (x)}^p \le \Constant \sum_{i=1}^j \eta^{ip}\abs{D^i u(\Phi(x))}^p \jac{\Phi}(x).
\]
Let \(\sigma^m \in \cK^m\).
Since \(\Phi\) is injective, by the change of variable formula,
\begin{multline*}
\int\limits_{\Phi^{-1}(\sigma^m \cap (T^{\ell^*} + Q^m_{2\mu\eta}) \setminus (T^{\ell^*} + Q^m_{\mu\eta}))} (\mu\eta)^{jp}\abs{D^j (u \circ \Phi)}^p\\
\begin{aligned} 
& \le  \SameConstant \sum_{i=1}^j \int\limits_{\Phi^{-1}(\sigma^m \cap (T^{\ell^*} + Q^m_{2\mu\eta}) \setminus (T^{\ell^*} + Q^m_{\mu\eta}))} (\mu\eta)^{ip}\abs{(D^i u) \circ \Phi}^p \jac{\Phi}\\
&\le \SameConstant \sum_{i=1}^j \int\limits_{\sigma^m \cap (T^{\ell^*} + Q^m_{2\mu\eta}) \setminus (T^{\ell^*} + Q^m_{\mu\eta})} (\mu\eta)^{ip}\abs{D^i u}^p.
\end{aligned}
\end{multline*}

Let \(0 < \beta < \ell + 1\).
If \(1 \le t_1 \le \dotsc \le t_i\) and \(t_1 + \dotsb + t_i = j \), then by property~\((\ref{itempropositionshrinkingfromaskeleton4})\) of Proposition~\ref{lemmaThickeningFaceNearDualSkeletonGlobal} we have for every \(x \in \Phi^{-1}(K^m \cap (T^{\ell^*} + Q^m_{\mu\eta}))\),
\begin{multline*}
\abs{D^{t_1} \Phi(x)}^p \dotsm \abs{D^{t_i}\Phi(x)}^p \\
\begin{aligned}
& \le \Constant \tau^{t_1 p(\frac{\ell + 1}{\beta} - 1)} \frac{\bigl(\jac{\Phi}(x)\bigr)^\frac{t_1 p}{\beta}}{(\mu\eta)^{(t_1 - 1)p}} \dotsm \tau^{t_i p(\frac{\ell + 1}{\beta} - 1)} \frac{\bigl(\jac{\Phi}(x)\bigr)^\frac{t_i p}{\beta}}{{(\mu\eta)^{(t_i - 1)p}}}\\
& = \SameConstant \tau^{jp(\frac{\ell + 1}{\beta} - 1)}\frac{\bigl(\jac{\Phi}(x)\bigr)^\frac{jp}{\beta}}{(\mu\eta)^{(j-i)p}}.
\end{aligned}
\end{multline*}
Taking \(\beta = jp\), we have
\[
\abs{D^{t_1} \Phi(x)}^p \dotsm \abs{D^{t_i}\Phi(x)}^p \le \SameConstant \tau^{\ell + 1 - jp} \frac{\jac{\Phi}(x)}{(\mu\eta)^{(j-i)p}}
\]
and this implies
\[
(\mu\eta)^{jp}\abs{D^j (u \circ \Phi) (x)}^p \le \Constant \tau^{\ell + 1 - jp} \sum_{i=1}^j (\mu\eta)^{ip}\abs{D^i u(\Phi(x))}^p \jac{\Phi}(x).
\]
Since \(\Phi\) is injective, by the change of variable formula,
\begin{multline*}
\int\limits_{\Phi^{-1}(\sigma^m \cap (T^{\ell^*} + Q^m_{\mu\eta}))} (\mu\eta)^{jp}\abs{D^j (u \circ \Phi)}^p\\
\begin{aligned}
& \le  \SameConstant \tau^{\ell + 1 - jp} \sum_{i=1}^j \int\limits_{\Phi^{-1}(\sigma^m \cap (T^{\ell^*} + Q^m_{\mu\eta}))} (\mu\eta)^{ip}\abs{(D^i u) \circ \Phi}^p \jac{\Phi}\\
& =  \SameConstant \tau^{\ell + 1 - jp} \sum_{i=1}^j \int\limits_{\sigma^m \cap (T^{\ell^*} + Q^m_{\mu\eta})} (\mu\eta)^{ip} \abs{D^i u}^p.
\end{aligned}
\end{multline*}

Since \(\sigma^m \cap (T^{\ell^*} + Q^m_{2\mu\eta}) \subset \Phi^{-1}\big(\sigma^m \cap (T^{\ell^*} + Q^m_{2\mu\eta})\big)\), by additivity of the integral we then have
\begin{multline*}
\int\limits_{\sigma^m \cap (T^{\ell^*} + Q^m_{2\mu\eta})} (\mu\eta)^{jp}\abs{D^j (u \circ \Phi)}^p\\
 \le 
 C_3 \sum_{i=1}^j \int\limits_{\sigma^m \cap (T^{\ell^*} + Q^m_{2\mu\eta}) \setminus (T^{\ell^*} + Q^m_{\mu\eta})} (\mu\eta)^{ip} \abs{D^i u}^p\\
 + \SameConstant \tau^{\ell + 1 - jp} \sum_{i=1}^j \int\limits_{\sigma^m \cap (T^{\ell^*} + Q^m_{\mu\eta})} (\mu\eta)^{ip} \abs{D^i u}^p.
\end{multline*}
We may take the union over all faces \(\sigma^m \in \cK^m\) and we deduce the estimate for smooth maps. By density of smooth maps in \(W^{k, p}(K^m \cap(T^{\ell^*} + Q^m_{2\mu\eta}); \R^\nu)\), we deduce that for every \(u\) in \(W^{k, p}(K^m \cap(T^{\ell^*} + Q^m_{2\mu\eta}); \R^\nu)\), the function \(u \circ \Phi\) also belongs to this space and satisfies the estimate above.
\end{proof}

We first describe the construction of the map \(\Phi\) in the case of only one \(\ell\) dimensional cube.

\begin{proposition}
\label{lemmaThickeningFaceNearDualSkeleton}
Let \(\ell \in \{1, \dotsc, m\}\), \(\eta > 0\), \(0 < \underline{\mu} < \mu < \overline{\mu} < 1\) and \(0 <\tau < \underline\mu/\mu\).
There exists a smooth function \(\lambda : \R^m\to [1,\infty)\) such that if \(\Phi : \R^m  \to \R^m\) is defined for \(x = (x', x'') \in \R^\ell \times \R^{m - \ell} \) by
\[
\Phi(x) = (\lambda(x)x', x''),
\]
then
\begin{enumerate}[$(i)$]
\item \(\Phi\) is injective,
\label{itemlemmaThickeningFaceNearDualSkeletonidentity0}
\item \(\Supp{\Phi}\subset Q^\ell_{\mu\eta} \times Q^{m-\ell}_{(1-\mu)\eta}\),
\label{itemlemmaThickeningFaceNearDualSkeletonidentity}
\item \(\Phi\big(Q^\ell_{\tau\mu\eta} \times Q^{m-\ell}_{(1-\overline{\mu})\eta} \big) \supset Q^\ell_{\underline{\mu}\eta} \times Q^{m-\ell}_{(1-\overline{\mu})\eta}\),
\label{itemlemmaThickeningFaceNearDualSkeletonidentity1}
\item for every \(0 < \beta < \ell\), for every \(j \in \N_*\) and for every 
\(x\in \R^m\), 
\[
(\mu\eta)^{j-1}\abs{D^j \Phi(x)} \le  C \bigl(\jac{\Phi}(x)\bigr)^\frac{j}{\beta},
\]
for some constant \(C > 0\) depending on \(\beta\), \(j\), \(m\), \(\mu/\underline{\mu}\) and \(\overline{\mu}/ \mu\),
\label{itemlemmaThickeningFaceNearDualSkeletonidentity2}
\item for every \(\beta > 0\), for every \(j \in \N_*\) and for every \(x\in Q^{\ell}_{\tau\mu\eta}\times Q^{m-\ell}_{(1-\overline\mu)\eta}\), 
\[
(\mu\eta)^{j-1}\abs{D^j \Phi(x)} \le  C' \tau^{j(\frac{\ell}{\beta} - 1)} \bigl(\jac{\Phi}(x)\bigr)^\frac{j}{\beta},
\]
for some constant \(C' > 0\) depending on \(\beta\), \(j\), \(m\), \(\mu/\underline{\mu}\) and \(\overline{\mu}/ \mu\).
\label{itemlemmaThickeningFaceNearDualSkeletonidentity3}
\end{enumerate}
\end{proposition}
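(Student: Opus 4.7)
The plan is to follow the template of Lemma~\ref{lemmaThickeningAroundPrimalSqueleton}: construct $\Phi$ as the inverse of the contraction
\[
  \Psi(x', x'') = \bigl((1-\alpha\varphi(x))x',\, x''\bigr),
\]
where $\varphi : \R^m \to [0,1]$ is a smooth cutoff equal to $1$ on $Q^\ell_{\underline\mu\eta}\times Q^{m-\ell}_{(1-\overline\mu)\eta}$, vanishing outside $Q^\ell_{\mu\eta}\times Q^{m-\ell}_{(1-\mu)\eta}$, chosen as a product of coordinate‑wise radially monotone 1D cutoffs so that $D\varphi[(x',0)] \le 0$, and satisfying $|D^j\varphi| \le C(\mu\eta)^{-j}$. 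The new ingredient with respect to Lemma~\ref{lemmaThickeningAroundPrimalSqueleton} is the choice of a \emph{large} contraction parameter
\[
  \alpha = 1 - \frac{\tau\mu}{\underline\mu} \in (0,1),
\]
close to $1$ for small $\tau$; on the core, $\Psi$ is then an exact $\tau\mu/\underline\mu$-dilation in $x'$ mapping $Q^\ell_{\underline\mu\eta}\times Q^{m-\ell}_{(1-\overline\mu)\eta}$ onto $Q^\ell_{\tau\mu\eta}\times Q^{m-\ell}_{(1-\overline\mu)\eta}$, which after inversion is precisely the covering property $(iii)$. Properties $(i)$-$(iii)$ then follow verbatim from Lemma~\ref{lemmaThickeningAroundPrimalSqueleton}: bijectivity of $\Psi$ from the strict monotonicity of $s \mapsto s(1-\alpha\varphi(s\omega, x''))$ on $[0,\infty)$ together with the intermediate value theorem, the support condition from the support of $\varphi$, and $(iii)$ from the explicit form of $\Psi$ on the core.

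The heart of the proof is the quantitative estimates. I would derive the closed form
\[
  \jac\Psi(x) = (1-\alpha\varphi(x))^{\ell-1}\bigl(1-\alpha\varphi(x) - \alpha D\varphi(x)[(x',0)]\bigr)
\]
exactly as in Lemma~\ref{lemmaThickeningAroundPrimalSqueleton}, which combined with $D\varphi[(x',0)] \le 0$ and the bound $\alpha|D\varphi||x'| \le C$ gives two-sided estimates $c(1-\alpha\varphi)^\ell \le \jac\Psi \le C(1-\alpha\varphi)^{\ell-1}$, hence $\jac\Phi(y) \ge c(1-\alpha\varphi(\Psi^{-1}(y)))^{-\ell}$. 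I would then identify the eigenstructure of the $x'$-block $M = sI_\ell - \alpha\, x'\otimes D_{x'}\varphi$ of $D\Psi$, namely one eigenvalue $d = 1-\alpha\varphi - \alpha D\varphi[(x',0)]$ on $\R x'$ and $\ell - 1$ eigenvalues equal to $s = 1-\alpha\varphi$ on $(D_{x'}\varphi)^\perp$. Exploiting this anisotropic structure via the Sherman--Morrison formula for $M^{-1}$, the block-triangular form of $D\Psi$, the scaled bounds on $D^j\varphi$, and the chain rule for higher derivatives of an inverse diffeomorphism, one establishes the sharp pointwise estimate
\[
  (\mu\eta)^{j-1}|D^j\Phi(y)| \le C\bigl(1-\alpha\varphi(\Psi^{-1}(y))\bigr)^{-j}.
\]
Combined with the lower bound on $\jac\Phi$, this gives $(\mu\eta)^{j-1}|D^j\Phi(y)| \le C(\jac\Phi(y))^{j/\ell}$, and since $\jac\Phi \ge 1$ and $\ell/\beta > 1$ for $\beta < \ell$, also $(iv)$ in the form required, with constants independent of $\tau$.

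For $(v)$, the region $Q^\ell_{\tau\mu\eta}\times Q^{m-\ell}_{(1-\overline\mu)\eta}$ coincides with $\Psi(Q^\ell_{\underline\mu\eta}\times Q^{m-\ell}_{(1-\overline\mu)\eta})$, which lies in an open neighborhood on which $\varphi\circ\Psi^{-1}\equiv 1$; there $\Phi$ is \emph{affine}, with $D\Phi = \operatorname{diag}(\underline\mu/(\tau\mu),\ldots,\underline\mu/(\tau\mu),1,\ldots,1)$, $\jac\Phi = (\underline\mu/(\tau\mu))^\ell$, and $D^j\Phi = 0$ for $j\ge 2$. The estimate reduces to the trivial bound $(\mu\eta)^{j-1}|D^j\Phi| \le C\tau^{-j}$ for $j = 1$ and to $0 \le \text{RHS}$ for $j\ge 2$, and the algebraic identity $\tau^{-j} = \tau^{j(\ell/\beta - 1)}\tau^{-j\ell/\beta}$ then absorbs the second factor into $\jac\Phi^{j/\beta}$ for any $\beta > 0$. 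The main technical obstacle is the sharp transition‑region bound $(\mu\eta)^{j-1}|D^j\Phi| \le C(1-\alpha\varphi\circ\Psi^{-1})^{-j}$ uniformly in $\tau$: the naive iteration of the inverse function theorem produces the weaker exponent $(1-\alpha\varphi)^{-(2j-1)}$, and the improvement to $(1-\alpha\varphi)^{-j}$ requires exploiting the fact that each $D^k\Psi$ is a sum of terms in which a derivative of $\varphi$ of total order $k$ is paired with at most one factor of $x'$, together with the anisotropic eigenstructure of $M^{-1}$ which contracts the "large" directions of $D^k\Psi$ along $\R x'$ by the benign factor $1/d$ rather than $1/s$.
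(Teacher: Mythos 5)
Your proposed construction — a single inverted contraction $\Phi = \Psi^{-1}$ with a box-shaped cutoff $\varphi$ — is genuinely different from the paper's, which builds $\Phi=\Phi_1\circ\Phi_2$ where $\Phi_2$ (Lemma~\ref{lemmaThickeningAroundDualSqueletonmu}) is a \emph{direct} radial expansion $x\mapsto(\varphi(\zeta(x))x',x'')$ with $\zeta^2=|x'|^2+\theta(x'')^2+\varepsilon\tau^2$ and the carefully tuned profile $\varphi(s)=\frac{\underline\mu/\mu}{s}\sqrt{1+\varepsilon}\,\bigl(1+b/\ln\frac1s\bigr)$, and $\Phi_1$ (Lemma~\ref{lemmaThickeningAroundPrimalSqueletonmu}) is a fixed bounded diffeomorphism converting the ball-shaped core into a cube. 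Your approach would indeed avoid the ball/cube conversion, but it does not deliver property~$(iv)$ for the full range $\beta<\ell$, and both intermediate bounds you rely on are incorrect.

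First, the Jacobian inequality is reversed. From $\jac\Psi=s^{\ell-1}d$ with $s=1-\alpha\varphi$ and $d=s-\alpha D_{x'}\varphi\cdot x'\ge s$, you correctly obtain $cs^\ell\le\jac\Psi\le Cs^{\ell-1}$, but inverting the map turns a \emph{lower} bound on $\jac\Psi$ into an \emph{upper} bound on $\jac\Phi=(\jac\Psi\circ\Phi)^{-1}$: what follows is $cs^{-(\ell-1)}\le\jac\Phi\le Cs^{-\ell}$, not $\jac\Phi\ge cs^{-\ell}$. With only $\jac\Phi\ge cs^{-(\ell-1)}$, even granting your claimed derivative bound, the argument in $(iv)$ works only for $\beta\le\ell-1$, while the range up to $\ell$ is needed (Corollary~\ref{corollaryShrinkingWkpEstimate} and the induction in Proposition~\ref{lemmaThickeningFaceNearDualSkeletonGlobal} invoke it with $\beta=jp$ across all intermediate face dimensions).

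Second, the sharp transition estimate $(\mu\eta)^{j-1}|D^j\Phi|\le Cs^{-j}$ is false. Take $\ell=m=1$, $\mu\eta=1$, and a point $x_1$ in the transition zone where $\varphi(x_1)\approx1$ (so $s(x_1)\approx1-\alpha$) but $\varphi''(x_1)\ne0$; there $\Psi'(x_1)\approx1-\alpha$, $\Psi''(x_1)$ is of order $1$, and $\Phi''=-\Psi''(\Phi)\,\bigl(\Psi'(\Phi)\bigr)^{-3}$ has size $(1-\alpha)^{-3}=s^{-3}$, not $s^{-2}$. Nothing about the anisotropic eigenstructure of $M^{-1}$ can rescue this: at such a point $D\varphi\approx0$, so there is no anisotropy to exploit, and the blow-up is already produced by the near-scalar contraction. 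As $\tau\to0$, $1-\alpha\to0$, so the constant cannot be $\tau$-uniform. The honest Faà-di-Bruno exponent $s^{-(2j-1)}$ that you correctly identify as the naive bound is the true one.

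The paper avoids both obstacles: building $\Phi_2$ directly as an expansion (so there is never an inverse-function estimate to control) makes the derivative bound $C/\zeta^j$ elementary, and the logarithmic correction in $\varphi$ is exactly what lifts the Jacobian lower bound from $\zeta^{-(\ell-1)}$ to $\zeta^{-\beta}$ for every $\beta<\ell$ via $(s\varphi(s))'\gtrsim s^{-\alpha}$ for all $\alpha<1$. If you want a single-map box-cutoff version, you would have to abandon the inverse-of-a-contraction and construct $\Phi$ directly as $(\lambda x',x'')$ with a smoothed sup-norm $\zeta$ and the same $1/s$-plus-log profile, but at that point you have reproduced Lemma~\ref{lemmaThickeningAroundDualSqueletonmu} rather than simplified it.
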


We postpone the proof of Proposition~\ref{lemmaThickeningFaceNearDualSkeleton} and we proceed to establish Proposition~\ref{lemmaThickeningFaceNearDualSkeletonGlobal}.

\begin{proof}[Proof of Proposition~\ref{lemmaThickeningFaceNearDualSkeletonGlobal}]
We first introduce  finite sequences \( (\mu_i)_{\ell\leq i \leq m}\) and \((\nu_i)_{\ell\leq i \leq m}\) such that
\[
0<\mu_\ell=\mu<\nu_{\ell+1}<\mu_{\ell+1}<\dotsc<\mu_{m-1}<\nu_m<\mu_m\leq 2\mu.
\]

Let \(\Phi_m = \Id\). Using downward induction, we shall define  maps \(\Phi_i:\R^m\to \R^m\)  for  \(i \in \{\ell, \dotsc, m-1\}\) such that \(\Phi_i\) satisfies  the following properties: 

\begin{enumerate}[(a)]
\item \(\Phi_i\) is injective,
\label{item1281}
\item for every \(\sigma^m \in \cS^m\), \(\Phi_i(\sigma^m) \subset \sigma^m\),
\label{item1282}
\item \(\Supp{\Phi_i}\subset T^{i^*} + Q^m_{2 \mu\eta}\),
\item for every \(r \in \{i^*, \dots, m - 1\}\), \(\Phi_i \big(T^{r} + Q^m_{\tau\mu\eta} \big) \supset T^{r} + Q^m_{\tau\mu\eta}\),
\label{item1283}
\item \(\Phi_i \big(T^{i^*} + Q^m_{\tau\mu\eta} \big) \supset T^{i^*} + Q^m_{\mu_i\eta}\),
\label{item1284}
\item 
for every \(0 < \beta < i+1\), for every \(j \in \N_*\) and for every 
\(x \in \R^m\),
\[
(\mu\eta)^{j-1} \abs{D^j \Phi_i(x)} \le  C \bigl(\jac{\Phi_i}(x)\bigr)^\frac{j}{\beta},
\]
for some constant \(C > 0\) depending on \(\beta\), \(j\), and \(m\),
\label{item1285}
\item for every \(0 < \beta < i+1\), for every \(j \in \N_*\) and for every \(x\in \Phi_i^{-1}(T^{i^*} + Q^m_{\mu_i\eta})\), 
\[
(\mu\eta)^{j-1}\abs{D^j \Phi_i(x)} \le  C' \tau^{j(\frac{i+1}{\beta} - 1)} \bigl(\jac{\Phi_i}(x)\bigr)^\frac{j}{\beta},
\]
for some constant \(C' > 0\) depending on \(\beta\), \(j\), and \(m\).
\label{item1286}
\end{enumerate}

The map \(\Phi_\ell\) will satisfy the conclusion of the proposition.

\medskip

Let \( i \in \{\ell+1, \dotsc, m\}\) and let \(\Theta_{i}\) be the map obtained from Proposition~\ref{lemmaThickeningFaceNearDualSkeleton} with parameters \(\ell = i\), \(\underline\mu = \mu_{i - 1}\), \(\mu = \nu_{i}\), \(\overline\mu = \mu_{i}\) and \(\frac{\tau\mu}{\nu_i}\).  Given \(\sigma^i \in \mathcal{S}^{i}\), we may identify \(\sigma^i\) with \(Q^{i}_{\eta} \times \{0^{m-i}\}\) and \(T^{(i-1)^*} \cap (\sigma^i + Q_{2\mu\eta}^m)\) with \(\{0^{i}\} \times Q_{2\mu\eta}^{m-i}\). The map \(\Theta_i\) induces by isometry a map which we shall denote by \(\Theta_{\sigma^i}\).

Let \(\Psi_i : \R^m  \to \R^m\) be defined for every \(x \in \R^m\) by
\[
\Psi_i(x):=\begin{cases} 
           \Theta_{\sigma^i}(x) & \text{if } x \in \sigma^i + Q^{m}_{(1-\nu_i)\eta} \text{ for some } \sigma^i \in \mathcal{S}^i,\\
            x&\text{otherwise} .
         \end{cases}
\] 
We explain why \(\Psi_i\) is well-defined. Since \(\Theta_{\sigma^i}\) coincides with the identity map on \(\partial\sigma^i + Q^m_{(1-\nu_{i})\eta}\), then for every \(\sigma^i_1, \sigma^i_2 \in \mathcal{S}^{i}\), if \(x \in (\sigma_1^i + Q^{m}_{(1-\nu_{i})\eta}) \cap (\sigma_2^i + Q^{m}_{(1-\nu_{i})\eta})\) and \(\sigma_1^i \ne \sigma_2^i\), then
\[
\Theta_{\sigma_1^i}(x) = x = \Theta_{\sigma_2^i}(x).
\]
One also verifies that \(\Psi_i\) is smooth.

Assuming that \(\Phi_i\) has been defined satisfying properties \eqref{item1281}--\eqref{item1286}, we let
\[
\Phi_{i-1}=\Psi_i \circ \Phi_i.
\]

We check that \(\Phi_{i-1}\) satisfies all required properties. Up to an exchange of coordinates, for every \(\sigma^i \in \cS^i\), we may assume that \(\sigma^i = Q_\eta^i \times \{0^{m-i}\}\) and \(\Theta_{\sigma^i}\) can be written as \(\Theta_{\sigma^i}(x) = (\lambda(x)x', x'')\), with \(\lambda(x) \ge 1\). Hence, for every \(0 < s \le 1\) and every \(r\in \{0, \dotsc, m-1\}\),
\begin{equation}
\label{equation1218}
\Psi_i(T^r + Q_{s\eta}^m) \supset T^r + Q_{s\eta}^m.
\end{equation}
Moreover, in the new coordinates, the set 
\begin{equation}
\label{equation1627}
(\sigma^i \times Q^{m-i}_{\eta}) \cap \big( (T^{(i-1)^*} + Q^m_{\tau\mu\eta}) \setminus (T^{i^*} + Q^m_{\mu_i\eta}) \big)
\end{equation}
becomes
\begin{equation}
\label{equation1628}
Q^i_{\tau\mu\eta} \times Q^{m-i}_{(1 - \mu_i)\eta}.
\end{equation}
In view of properties~\((\ref{itemlemmaThickeningFaceNearDualSkeletonidentity0})\) and~\((\ref{itemlemmaThickeningFaceNearDualSkeletonidentity1})\) of Proposition~\ref{lemmaThickeningFaceNearDualSkeleton},
\[
\Theta_{\sigma^i}(Q^i_{\tau\mu\eta} \times Q^{m-i}_{(1 - \mu_i)\eta}) \supset Q^i_{\mu_{i-1}\eta} \times Q^{m-i}_{(1 - \mu_i)\eta}.
\]
Since this property holds for every \(\sigma^i \in \cS^i\),
\begin{equation}
\label{equation1219}
\Psi_i\big( (T^{(i-1)^*} + Q^m_{\tau\mu\eta}) \setminus (T^{i^*} + Q^m_{\mu_i\eta}) \big) \supset 
(T^{(i-1)^*} + Q^m_{\mu_{i-1}\eta}) \setminus (T^{i^*} + Q^m_{\mu_i\eta}) .
\end{equation}

\begin{proof}[Proof of Property \eqref{item1283}]
Let \(r \in \{(i-1)^*, \dots, m-1\}\).
By induction hypothesis and by equation~\eqref{equation1218} with \(s = \tau\mu\),
\[
\Phi_{i-1}(T^r + Q^m_{\tau\mu\eta}) \supset \Psi_i(T^r + Q^m_{\tau\mu\eta}) \supset T^r + Q^m_{\tau\mu\eta}.
\qedhere
\]
\end{proof}

\begin{proof}[Proof of Property \eqref{item1284}]
By induction hypothesis (properties~\eqref{item1283} and \eqref{item1284}),
\[
\Phi_{i}(T^{(i-1)^*} + Q^m_{\tau\mu\eta}) \supset (T^{(i-1)^*} + Q^m_{\tau\mu\eta})\cup  (T^{i^*} + Q^m_{\mu_i\eta}).
\]
Thus,
\[
\Phi_{i-1}(T^{(i-1)^*} + Q^m_{\tau\mu\eta}) \supset \Psi_i (T^{(i-1)^*} + Q^m_{\tau\mu\eta}) \cup \Psi_i (T^{i^*} + Q^m_{\mu_i\eta}). 
\]
By inclusion~\eqref{equation1219} and by inclusion~\eqref{equation1218} with \(r = i^*\) and \(s = \mu_i\),
\[
\begin{split}
 \Phi_{i-1}(T^{(i-1)^*} + Q^m_{\tau\mu\eta})
& \supset
\big((T^{(i-1)^*} + Q^m_{\mu_{i-1}\eta}) \setminus (T^{i^*} + Q^m_{\mu_i\eta})\big) \cup \big(T^{i^*} + Q^m_{\mu_i\eta}\big)  \\
& = T^{(i-1)^*} + Q^m_{\mu_{i-1}\eta}.
\end{split}
\]
This gives the conclusion.
\end{proof}

\begin{proof}[Proof of Property \eqref{item1286}]
Let \(j \in \N_*\) and \(0 < \beta < i\).
By the chain rule for higher order derivatives, we have for every \(x \in \R^m\),
\[
\abs{D^j\Phi_{i-1}(x)} \le
C_1 \sum_{r=1}^j \sum_{\substack{1 \le t_1 \le \dotsc \le t_r\\ t_1 + \dotsb + t_r = j}}\abs{D^r \Psi_i(\Phi_i(x))}\, \abs{D^{t_1} \Phi_i(x)} \dotsm \abs{D^{t_r} \Phi_i(x)}.
\]
Let \(x \in \Phi_{i-1}^{-1}(T^{(i-1)^*} + Q^{m}_{\mu_{i-1}\eta})\). 
By induction hypothesis (property~\eqref{item1285}), for every \(r \in \{1, \dots, j\}\), if \(1 \le t_1 \le \ldots \le t_r\) and \(t_1 + \dotsb + t_r = j \), then
\[
\abs{D^{t_1} \Phi_i(x)} \dotsm \abs{D^{t_r}\Phi_i(x)} 
\le C_2 \frac{(\jac{\Phi_i}(x))^\frac{j}{\beta}}{(\mu\eta)^{j-r}}.
\]
If in addition \(x \in \Phi_{i-1}^{-1}\big((T^{(i-1)^*} + Q^{m}_{\mu_{i-1}\eta}) \setminus (T^{i^*} + Q^{m}_{\mu_{i}\eta})\big)\), then \(\Phi_i(x) \in \Psi_{i}^{-1}\big((T^{(i-1)^*} + Q^{m}_{\mu_{i-1}\eta}) \setminus (T^{i^*} + Q^{m}_{\mu_{i}\eta})\big)\). 
By the correspondence between the sets given by \eqref{equation1627} and \eqref{equation1628}, by inclusion~\eqref{equation1219}, and  by property~\((\ref{itemlemmaThickeningFaceNearDualSkeletonidentity3})\) of Proposition~\ref{lemmaThickeningFaceNearDualSkeleton}, we have for every \(0 < \alpha < i\),
\[
\abs{D^{r} \Psi_i(\Phi_i(x))} \le 
C_3 \tau^{r(\frac{i}{\alpha} - 1)} \frac{\big(\jac{\Psi_i}(\Phi_i(x))\big)^\frac{r}{\alpha}}{(\mu\eta)^{r-1}}.
\]
Take \(\alpha = \beta\frac{r}{j}\). Since \(r \le j\) and \(\tau \le 1\), we get
\[
\abs{D^{r} \Psi_i(\Phi_i(x))} \le 
C_3 \tau^{r(\frac{ij}{\beta r} - 1)} \frac{\big(\jac{\Psi_i}(\Phi_i(x))\big)^\frac{j}{\beta}}{(\mu\eta)^{r-1}} \le C_3 \tau^{j(\frac{i}{\beta} - 1)} \frac{\big(\jac{\Psi_i}(\Phi_i(x))\big)^\frac{j}{\beta}}{(\mu\eta)^{r-1}}.
\]
Thus, for every \(x \in \Phi_{i-1}^{-1}\big((T^{(i-1)^*} + Q^{m}_{\mu_{i-1}\eta}) \setminus (T^{i^*} + Q^{m}_{\mu_{i}\eta})\big)\),
\[
\begin{split}
\abs{D^j\Phi_{i-1}(x)} 
& \le
C_4 \tau^{j(\frac{i}{\beta} - 1)} \frac{\big(\jac{\Psi_i}(\Phi_i(x))\big)^\frac{j}{\beta}}{(\mu\eta)^{r-1}} \frac{(\jac{\Phi_i}(x))^\frac{j}{\beta}}{(\mu\eta)^{j-r}}\\
& = C_4 \tau^{j(\frac{i}{\beta} - 1)} \frac{(\jac{\Phi_{i-1}(x)})^\frac{j}{\beta}}{(\mu\eta)^{j-1}}.
\end{split}
\]

On the other hand, if \(x \in \Phi_{i-1}^{-1}(T^{i^*} + Q^{m}_{\mu_{i}\eta})\),
then \(\Phi_i(x) \in \Psi_{i}^{-1}(T^{i^*} + Q^{m}_{\mu_{i}\eta})\). By inclusion  \eqref{equation1218} with \(r = i^*\) and \(s = \mu_i\), \(\Phi_i(x) \in T^{i^*} + Q^{m}_{\mu_{i}\eta}\). By induction hypothesis (property \eqref{item1286}), we deduce that
for every \(r \in \{1, \dots, j\}\), if \(1 \le t_1 \le \ldots \le t_r\) and \(t_1 + \dotsb + t_r = j \), then
\[
\abs{D^{t_1} \Phi_i(x)} \dotsm \abs{D^{t_r}\Phi_i(x)} \le 
C_5 \tau^{j(\frac{i}{\beta} - 1)} \frac{(\jac{\Phi_i}(x))^\frac{j}{\beta}}{(\mu\eta)^{j-r}}.
\]
By property~\((\ref{itemlemmaThickeningFaceNearDualSkeletonidentity2})\) of Proposition~\ref{lemmaThickeningFaceNearDualSkeleton},
\[
\abs{D^r\Psi_{i}(\Phi_i(x))} \le
C_6  \frac{(\jac{\Psi_i}(\Phi_i(x)))^\frac{j}{\beta}}{(\mu\eta)^{r-1}}.
\]
We deduce as above that
\[
\abs{D^j\Phi_{i-1}(x)} \le
C_7 \tau^{j(\frac{i}{\beta} - 1)} \frac{(\jac{\Phi_{i-1}}(x))^\frac{j}{\beta}}{(\mu\eta)^{j-1}}.
\]
This gives the conclusion.
\end{proof}
The other properties can be checked as in the proof of Proposition~\ref{propositionthickeningfromaskeleton}.

By downward induction, we conclude that properties \eqref{item1281}--\eqref{item1286} hold for every \(i \in \{\ell, \dots, m-1\}\). In particular, we deduce properties $(i)$--\((v)\) of Proposition~\ref{lemmaThickeningFaceNearDualSkeleton}.
\end{proof}

We need a couple of lemmas in order to prove Proposition~\ref{lemmaThickeningFaceNearDualSkeleton}: 

\begin{lemma}
\label{lemmaThickeningAroundPrimalSqueletonmu}
Let \(\eta > 0\), let \(0<\underline{\mu}<\mu<\overline{\mu}<1\)  and \(0 < \kappa < \underline{\mu}/\mu\).
There exists a smooth function \(\lambda : \R^m \to [1,\infty)\) such that if \(\Phi : \R^m   \to \R^m\) is defined for \(x = (x', x'') \in \R^\ell \times \R^{m - \ell}  \) by
\[
\Phi(x) = (\lambda(x)x', x''),
\]
then
\begin{enumerate}[$(i)$]
\item \(\Phi\) is a diffeomorphism,
\item \(\Supp{\Phi}\subset Q^\ell_{\mu\eta} \times Q^{m-\ell}_{(1-\mu)\eta}\),
\item \(\Phi\bigl( Q^\ell_{\kappa\mu\eta} \times Q^{m-\ell}_{(1-\overline{\mu})\eta } \bigr) \supset Q^\ell_{\underline{\mu} \eta} \times Q^{m-\ell}_{(1-\overline{\mu})\eta }\),
\item for every \(j\in \N_{*}\) and for every 
\(x\in \R^m\),
\begin{equation*}
(\mu\eta)^{j-1}\abs{D^j\Phi(x)}\leq C,
\end{equation*}
for some constant \(C > 0\) depending on \(j\), \(m\), \(\mu / \underline{\mu}\),  \(\overline{\mu}/\mu\) and \(\kappa\),
\item for every \(j\in \N_{*}\) and every 
\(x\in \R^m\),
\[
C' \le \jac \Phi(x) \le C'',
\]
\end{enumerate}
for some constants \(C', C'' > 0\) depending on  \(m\), \(\mu / \underline{\mu}\),  \(\overline{\mu}/\mu\) and \(\kappa\).
\end{lemma}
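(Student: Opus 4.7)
The plan is to mirror the construction of Lemma~\ref{lemmaThickeningAroundPrimalSqueleton}, interchanging the roles of the tangent and normal directions: rather than pushing outward from the \(\ell\)-face, the map \(\Phi\) now expands the small cube \(Q^{\ell}_{\kappa\mu\eta}\) so that its image covers \(Q^{\ell}_{\underline\mu\eta}\), while remaining the identity outside \(Q^{\ell}_{\mu\eta} \times Q^{m-\ell}_{(1-\mu)\eta}\). I would again define \(\Phi\) as the inverse of a map \(\Psi(x', x'') = ((1 - \alpha\varphi(x))x', x'')\). After reducing to \(\eta = 1\), the cutoff \(\varphi\) will be the product \(\varphi(x) = \prod_{i=1}^{\ell} \psi(x_i)\prod_{i=\ell+1}^{m} \theta(x_i)\), where \(\psi : \R \to [0, 1]\) equals one on \([-\underline\mu, \underline\mu]\), vanishes outside \([-\mu, \mu]\), is nonincreasing on \(\R_+\) and nondecreasing on \(\R_-\); and \(\theta : \R \to [0, 1]\) equals one on \([-(1-\overline\mu), 1-\overline\mu]\) and vanishes outside \([-(1-\mu), 1-\mu]\). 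The natural choice for the parameter is \(\alpha = 1 - \tfrac{\kappa\mu}{\underline\mu}\), which lies in \((0, 1)\) thanks to the hypothesis \(\kappa < \underline\mu/\mu\); on the region \(\{\varphi = 1\}\) this makes \(\Psi\) a linear rescaling that sends \(Q^{\ell}_{\underline\mu} \times Q^{m-\ell}_{1-\overline\mu}\) bijectively onto \(Q^{\ell}_{\kappa\mu} \times Q^{m-\ell}_{1-\overline\mu}\), yielding property~\((iii)\) after inversion.

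The qualitative properties would then be verified along the same lines as in Lemma~\ref{lemmaThickeningAroundPrimalSqueleton}. Injectivity of \(\Psi\) follows since, along any ray through the origin in the \(x'\) direction, the function \(s \mapsto s(1 - \alpha \varphi(sx', x''))\) is the product of an increasing function with a nondecreasing positive one, hence strictly increasing, with range \([0, \infty)\) by the Intermediate value theorem; this also gives surjectivity, and hence bijectivity, of \(\Psi\). The Jacobian is again computed as the determinant of a nilpotent perturbation of a diagonal linear map, and the monotonicity of \(\varphi\) in each \(x_i\) yields
\[
\jac\Psi(x) \ge (1 - \alpha\varphi(x))^{\ell} \ge (1-\alpha)^{\ell} = \Bigl(\frac{\kappa\mu}{\underline\mu}\Bigr)^{\ell} > 0,
\]
which provides the positive lower bound for \(\jac\Phi\) appearing in property~\((v)\); the upper bound follows by direct inspection of \(D\Psi\). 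Hence \(\Phi = \Psi^{-1}\) is a smooth diffeomorphism of \(\R^m\), and since \(\Psi\) coincides with the identity outside \(Q^{\ell}_{\mu} \times Q^{m-\ell}_{1-\mu}\), the same is true of \(\Phi\).

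The one point where the argument genuinely differs from Lemma~\ref{lemmaThickeningAroundPrimalSqueleton} is the scaling \((\mu\eta)^{j-1}\) in the derivative estimate~\((iv)\). The transition region of \(\psi\) has width \(\mu - \underline\mu\), comparable to \(\mu\) up to a factor controlled by \(\mu/\underline\mu\), so \(\abs{D^j \psi} \le C/\mu^j\), and since the transition region of \(\theta\) has length comparable to \(1 - \mu \ge 1/2\), one has \(\abs{D^j \theta} \le C\); hence \(\abs{D^j\varphi(x)} \le C/\mu^j\) with \(C\) depending only on \(m, j\) and the ratios \(\mu/\underline\mu\) and \(\overline\mu/\mu\). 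Combined with \(\abs{x'} \le \sqrt{\ell}\,\mu\) on the support of \(\varphi\), the Leibniz formula applied to \(\Psi(x) = ((1-\alpha\varphi(x))x', x'')\) yields \(\mu^{j-1}\abs{D^j\Psi(x)} \le C\), and reinstating \(\eta\) gives the homogeneous statement \((\mu\eta)^{j-1}\abs{D^j\Psi(x)} \le C\). The analogous estimate for \(\Phi\) then follows by differentiating \(\Phi \circ \Psi = \Id\) inductively in \(j\) and using the uniform lower bound on \(\jac\Psi\). The main obstacle is really just the careful bookkeeping of the scaling factor \(\mu\) through the chain rule for higher derivatives; this is mechanical but is the only step requiring attention beyond a literal adaptation of the proof of Lemma~\ref{lemmaThickeningAroundPrimalSqueleton}.
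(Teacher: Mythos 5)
Your construction is the same as the paper's: you reduce by scaling, build $\varphi$ as a product of one-dimensional cutoffs $\psi, \theta$, define $\Psi(x) = ((1-\alpha\varphi(x))x', x'')$ with $\alpha = 1 - \kappa\mu/\underline\mu$, and take $\Phi = \Psi^{-1}$; the paper scales so that $\mu\eta = 1$ rather than $\eta = 1$, but that is cosmetic. The argument for injectivity, surjectivity, and the Jacobian lower bound is exactly the one used in Lemma~\ref{lemmaThickeningAroundPrimalSqueleton}, and the paper does indeed cite that lemma and say "as in Lemma~\ref{lemmaThickeningAroundPrimalSqueleton} we can prove that $\Phi = \Psi^{-1}$ satisfies the required properties."

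One factual slip in your derivative estimate: the transition region of $\theta$ is the pair of intervals $\{1-\overline\mu < \abs{t} < 1-\mu\}$, so its width is $\overline\mu - \mu$, not $1 - \mu$; and nothing in the hypotheses forces $\mu \le 1/2$, so the inequality "$1-\mu \ge 1/2$" is unjustified. The correct bound is $\abs{D^j\theta} \le C/(\overline\mu - \mu)^j \le C'/\mu^j$ with $C'$ depending on $\overline\mu/\mu$ (this is precisely why the paper phrases things with the scaling $\mu\eta = 1$, where the transition width becomes $\overline\mu/\mu - 1$ and the constant dependence is transparent). Fortunately, since you only need $\abs{D^j\varphi} \lesssim \mu^{-j}$, the weaker bound $\abs{D^j\theta} \lesssim \mu^{-j}$ still suffices, so the final estimate $(\mu\eta)^{j-1}\abs{D^j\Phi} \le C$ goes through exactly as you claim; but the intermediate reasoning about $\theta$ should be corrected.
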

\begin{proof}
By scaling, we may assume that \(\mu\eta=1\).
Let \(\psi : \R \to [0, 1]\) be a smooth function such that 
\begin{itemize}[\(-\)]
\item the function \(\psi\) is nonincreasing on \(\R_+\) and nondecreasing on \(\R_-\),
\item for \(\abs{t} \le \underline{\mu}/\mu\), \(\psi(t)=1\), 
\item for \(\abs{t} \ge 1\), \(\psi(t)=0\).
\end{itemize}
Let \(\theta : \R \to [0, 1]\) be a smooth function such that
\begin{itemize}[\(-\)]
\item for \(\abs{t}\le \frac{1-\overline{\mu}}{\mu}\), \(\theta(t)=1\),
\item for \(\abs{t} \ge \frac{1-\mu}{\mu}\), \(\theta(t)=0\).
\end{itemize}
Since \(\frac{1-\mu}{\mu}-\frac{1-\overline{\mu}}{\mu}= \overline{\mu}/ \mu - 1\), we may require that for every  \(j\in \N_*\) and for every \(t\geq 0\), \(\abs{D^j \theta (t)}\leq C\), for some constant \(C > 0\) depending only on \(j\) and \(\overline{\mu}/\mu\).

Let \(\varphi : \R^m \to \R\) be the function defined for \(x=(x_1, \dotsc, x_m) \in \R^m\) by
\[
\textstyle \varphi(x) =  \prod\limits_{i=1}^\ell \psi(x_i) \prod\limits_{i=\ell+1}^m \theta (x_i).
\]
Let \(\Psi : \R^m \to \R^m\) be the function defined for \(x = (x', x'') \in \R^\ell \times \R^{m - \ell}\) by
\[
\Psi(x)= \big((1 - \alpha \varphi(x))x', x''\big),
\]
where \(\alpha \in \R\). In particular, for every \(x = (x', x'') \in Q^\ell_{\underline\mu/\mu} \times Q^{m - \ell}_{\frac{1 - \overline\mu}{\mu}}\), 
\[
\Psi(x) = ((1 - \alpha) x', x'').
\]
Taking \(\alpha=1-\frac{\kappa\mu}{\underline{\mu}}\), we deduce that \(\Psi\) is a bijection between \(Q^\ell_{\underline\mu/\mu} \times Q^{m-\ell}_{\frac{1 - \overline\mu}{\mu}}\) and \(Q^\ell_{\kappa} \times Q^{m-\ell}_{\frac{1 - \overline\mu}{\mu}}\).
As in Lemma~\ref{lemmaThickeningAroundPrimalSqueleton}  we can prove that \(\Phi=\Psi^{-1}\) satisfies the required properties.
\end{proof}

\begin{lemma}
\label{lemmaThickeningAroundDualSqueletonmu}
Let \(\ell \in \{1,\dotsc, m\}\), \(\eta > 0\), \(0<\underline{\mu}<\mu<\overline{\mu}<1\) and \(0 < \tau < \underline{\mu}/\mu\).
There exists a smooth function \(\lambda : \R^m \to [1, \infty)\) such that if \(\Phi : \R^m \to \R^m\) is defined for \(x = (x', x'') \in \R^\ell \times \R^{m - \ell}\) by
\[
\Phi(x) = (\lambda(x)x', x''),
\]
then
\begin{enumerate}[$(i)$]
\item \(\Phi\) is injective,
\item \(\Supp{\Phi}\subset Q^\ell_{\mu\eta} \times Q^{m-\ell}_{(1-\mu)\eta}\),
\item \(\Phi(B^{\ell}_{\tau\mu\eta}\times Q^{m-\ell}_{(1-\overline{\mu})\eta})\supset  B^{\ell}_{\underline{\mu} \eta} \times Q^{m-\ell}_{(1-\overline{\mu})\eta} \), 
\label{item2169}
\item for every \(0 < \beta < \ell\), for every \(j \in \N_*\) and for every 
\(x\in \R^m\), 
\[
(\mu\eta)^{j-1}\abs{D^j \Phi(x)} \le  C \bigl(\jac{\Phi}(x)\bigr)^\frac{j}{\beta},
\]
for some constant \(C > 0\) depending on \(\beta\), \(j\), \(m\), \(\mu/\underline{\mu}\)   and \(\overline{\mu}/\mu\),
\item for every \(\beta > 0\), for every \(j \in \N_*\) and for every 
\(x\in B^{\ell}_{\tau\mu\eta}\times Q^{m-\ell}_{(1-\overline\mu)\eta}\), 
\[
(\mu\eta)^{j-1}\abs{D^j \Phi(x)} \le  C' \tau^{j(\frac{\ell}{\beta}- 1)}\bigl(\jac{\Phi}(x)\bigr)^\frac{j}{\beta},
\]
for some constant \(C' > 0\) depending on \(\beta\), \(j\), \(m\), \(\mu/\underline{\mu}\)   and \(\overline{\mu}/\mu\).
\label{item1132}
\end{enumerate}
\end{lemma}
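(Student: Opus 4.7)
The plan is to adapt the construction of Lemma~\ref{lemmaThickeningAroundDualSqueleton}. I will set \(\lambda(x) = \varphi(\zeta(x))\) with \(\zeta(x) = \sqrt{\abs{x'}^2 + \theta(x'')^2}\), where \(\theta : \R^{m-\ell} \to [0,\infty)\) is a smooth cutoff vanishing on \(Q^{m-\ell}_{(1-\overline\mu)\eta}\) and bounded below by \(\mu\eta\) outside \(Q^{m-\ell}_{(1-\mu)\eta}\), and \(\varphi : [0,\infty) \to [1,\infty)\) is a smooth profile equal to \(\underline\mu/(\tau\mu)\) on \([0,\tau\mu\eta]\), equal to \(1\) on \([\mu\eta,\infty)\), with \(s \mapsto s\varphi(s)\) nondecreasing and, on the transition interval \([\tau\mu\eta, \mu\eta]\), comparable to a smoothing of \(s \mapsto a/s + b\) with \(a = \eta(\underline\mu - \tau\mu)/(1-\tau)\) and \(b = (\mu - \underline\mu)/((1-\tau)\mu)\), both bounded uniformly in \(\tau\). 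Two features of this profile, absent from Lemma~\ref{lemmaThickeningAroundDualSqueleton}, deserve emphasis: since \(\Phi\) must now be smooth on all of \(\R^m\) (there is no excluded set \(T\)), making \(\varphi\) \emph{constant} near \(0\) is what ensures that \(\lambda\) is smooth across \(\{\abs{x'} = 0\}\); and the inner value \(\underline\mu/(\tau\mu)\) of \(\varphi\) is precisely the stretching factor demanded by property~\((\ref{item2169})\).

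Properties \((i)\)--\((\ref{item2169})\) then follow by direct inspection, exactly as in Lemma~\ref{lemmaThickeningAroundDualSqueleton}: injectivity from the monotonicity of \(s \mapsto s\varphi(s)\); the support condition because \(\abs{x'} \ge \mu\eta\) or \(\theta(x'') \ge \mu\eta\) forces \(\zeta \ge \mu\eta\) and thus \(\lambda = 1\); and \((\ref{item2169})\) because on \(B^\ell_{\tau\mu\eta} \times Q^{m-\ell}_{(1-\overline\mu)\eta}\) one has \(\theta(x'') = 0\), so \(\zeta(x) = \abs{x'} \le \tau\mu\eta\) and \(\lambda\) takes the constant value \(\underline\mu/(\tau\mu)\), sending this region bijectively onto \(B^\ell_{\underline\mu\eta} \times Q^{m-\ell}_{(1-\overline\mu)\eta}\).

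The general estimate \((iv)\) will be obtained by repeating the Jacobian computation of the proof of property~\((\ref{item03015})\) of Lemma~\ref{lemmaThickeningAroundDualSqueleton}: the bounds \(\abs{\varphi^{(k)}(s)} \le C_k (\mu\eta)/s^{k+1}\), with \(C_k\) independent of \(\tau\) thanks to the uniform control on \(a\) and \(b\), combined with the lower estimate \(\jac{\Phi}(x) \gtrsim 1/\zeta(x)^\beta\) for every \(\beta < \ell\) on the transition region and the chain-rule bound \(\abs{D^j\Phi(x)} \le C(\mu\eta)/\zeta(x)^j\), yield \((\mu\eta)^{j-1}\abs{D^j\Phi} \le C(\jac\Phi)^{j/\beta}\); outside the support the estimate is trivial, and on the inner ball where \(\lambda\) is constant it follows at once from \(\jac\Phi = \lambda^\ell \ge 1\) and the power inequality \(\lambda \le \lambda^{\ell/\beta}\). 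For the quantitative improvement \((\ref{item1132})\), the decisive observation is that on \(B^\ell_{\tau\mu\eta} \times Q^{m-\ell}_{(1-\overline\mu)\eta}\) the map \(\Phi\) is \emph{affine}: \(\abs{D\Phi} = \underline\mu/(\tau\mu)\), \(D^j\Phi = 0\) for \(j \ge 2\), and \(\jac\Phi = (\underline\mu/(\tau\mu))^\ell\). The claimed inequality for \(j \ge 2\) is then trivial, while for \(j = 1\) it reduces to \(\underline\mu/(\tau\mu) \le C'\tau^{\ell/\beta - 1}(\underline\mu/(\tau\mu))^{\ell/\beta}\), equivalently to \((\mu/\underline\mu)^{\ell/\beta - 1} \le C'\), which holds by choosing \(C'\) appropriately in terms of \(\mu/\underline\mu\) and \(\beta\). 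I expect the main technical obstacle to be the explicit construction of a single smooth profile \(\varphi\) that simultaneously guarantees the monotonicity of \(s\varphi(s)\), the \(\tau\)-uniform derivative bounds on the transition interval, and the prescribed boundary values at \(s = \tau\mu\eta\) and \(s = \mu\eta\); this amounts to a careful smoothing of the piecewise \(1/s\)-type profile described above.
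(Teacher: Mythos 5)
Your construction trades the paper's two devices---adding $\varepsilon\tau^2$ inside $\zeta^2$ so that $\zeta$ stays bounded below, and using a logarithmically corrected $1/s$ profile so that $(s\varphi(s))'$ blows up as $s\to 0$---for a single one: a constant plateau for $\varphi$ on $[0,\tau\mu\eta]$, which keeps $\lambda=\varphi\circ\zeta$ smooth across $\{\zeta=0\}$ without regularizing $\zeta$. This indeed yields a smooth $\lambda$ and makes $(iii)$ and $(\ref{item1132})$ easy, but property~$(iv)$ fails for it: the lower bound $\jac\Phi\gtrsim 1/\zeta^\beta$ that you invoke on the transition region is false there for $\beta\in(\ell-1,\ell)$.

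Test it at a point with $\theta(x'')=0$ and $\abs{x'}=\zeta$ in the transition interval $(\tau\mu\eta,\mu\eta)$, so $\abs{x'}^2/\zeta^2=1$. Then $\jac\Phi(x)=\varphi(\zeta)^{\ell-1}\,(s\varphi(s))'\big|_{s=\zeta}$ and $\abs{D\Phi(x)}=\varphi(\zeta)$. For a profile equal (or comparable) to $a/s+b$ there, $(s\varphi(s))'\equiv b$, so $\abs{D\Phi}\le C(\jac\Phi)^{1/\beta}$ reduces to $\varphi(\zeta)^{1-(\ell-1)/\beta}\le C\,b^{1/\beta}$. Letting $\zeta\to(\tau\mu\eta)^+$ gives $\varphi(\zeta)\to\underline\mu/(\tau\mu)$, which diverges as $\tau\to 0$, and for $\ell-1<\beta<\ell$ the exponent $1-(\ell-1)/\beta$ is strictly positive; hence no $C$ depending only on $\beta,j,m,\mu/\underline\mu,\overline\mu/\mu$ can make this hold. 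Smoothing the corner does not help, because $\jac\Phi$ must interpolate continuously from $(\underline\mu/(\tau\mu))^\ell$ on the inner ball down to $(\underline\mu/(\tau\mu))^{\ell-1}b$ just outside, so the bad ratio is attained somewhere. The deeper issue is that when $\abs{x'}\approx\zeta$ the relevant term in $\jac\Phi$ is $\varphi^{\ell-1}(s\varphi)'$, and reaching $\jac\Phi\gtrsim 1/\zeta^\beta$ for $\beta$ all the way to $\ell$ requires $(s\varphi)'\gtrsim 1/s^\alpha$ for every $\alpha<1$, which the $1/s\,(1+b/\ln(1/s))$ profile supplies but which no profile that is constant near $0$ can (it forces $(s\varphi)'$ bounded on the transition, leaving only $\jac\Phi\gtrsim 1/\zeta^{\ell-1}$). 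Since a $1/s$-type profile cannot be constant near $0$, the smoothness of $\lambda$ must then come from bounding $\zeta$ away from zero, which is exactly what the $\varepsilon\tau^2$ term does. The two devices are complementary, not interchangeable; your plateau sacrifices precisely the estimate needed for Corollary~\ref{corollaryShrinkingWkpEstimate}.
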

\begin{proof}
By scaling, we may assume that \(\mu\eta = 1\). Given \(\varepsilon > 0\) and \(b > 0\), let
\(\varphi : (0, \infty) \to [1, \infty)\) be a smooth function such that
\begin{itemize}[\(-\)]
\item for \(0 < s \le \tau \sqrt{1 + \varepsilon}\), \(\varphi(s)= \dfrac{\underline{\mu}/\mu}{s} \sqrt{1 + \varepsilon} \, \Bigl(1+\frac{b}{\ln \frac{1}{s}}\Bigr)\),
\item for \(s \ge 1\), \(\varphi(s)=1\),
\item the function \(s \in (0, \infty) \mapsto s\varphi(s)\) is increasing.
\end{itemize}
Note that such function \(\varphi\) exists if we take \(\varepsilon > 0\) such that 
\[
(\underline{\mu}/\mu) \sqrt{1 + \varepsilon} < 1
\]
and thus
\(\tau \sqrt{1 + \varepsilon} < 1\) and if we take \(b > 0\) such that
\[
(\underline{\mu}/{\mu}) \sqrt{1 + \varepsilon} \,  \Bigl(1+\frac{b}{\ln \frac{1}{(\underline{\mu}/{\mu})\sqrt{1 + \varepsilon}}}\Bigr) < 1.
\]
Let \(\theta : \R^{m-\ell} \to [0, 1]\) be a smooth function such that 
\begin{itemize}[\(-\)]
\item for \(y \in Q^{m-\ell}_{\frac{1-\overline{\mu}}{\mu}}\),  \(\theta(y) = 0\),
\item for \(y \in \R^{m-\ell} \setminus Q^{m-\ell}_{\frac{1 - {\mu}}{\mu}}\), \(\theta(y)=1\). 
\end{itemize}
We now introduce  for \(x=(x', x'') \in \R^\ell\times \R^{m-\ell}\),  
\[
\zeta(x) = \sqrt{\abs{x'}^2 + \theta\bigl(x''\bigr)^2 + \varepsilon\tau^2}.
\]

Let \(\lambda : \R^m\to \R\) be the function defined for \(x \in \R^m\) by  
\[
\lambda(x)= \varphi(\zeta(x)).
\]
As in the proof of Lemma~\ref{lemmaThickeningAroundDualSqueleton}, one may check that the map \(\Phi\) defined in the statement satisfies all the required properties:

\begin{proof}[Proof of statement \((\ref{item2169})\)]
Let \(x \in B^\ell_{{\underline{\mu}}/{\mu}} \times Q^{m-\ell}_{\frac{1 - \overline{\mu}}{\mu}}\). For every \(s \ge 0\),
\[
\Phi(sx', x'') = \Big( s \varphi(\sqrt{s^2\abs{x'}^2 + \varepsilon \tau^2}) x', x'' \Big).
\]
Consider the function \(h : [0, \infty) \to \R\) defined by
\[
h(s) = s \varphi(\sqrt{s^2 + \varepsilon \tau^2}).
\]
Then, assuming that \(x' \ne 0\),
\[
\Phi(sx', x'') = \Big( h(s\abs{x'}) \frac{x'}{\abs{x'}}, x'' \Big).
\]
We have \(h(0) = 0\) and \(h(\tau) > \underline{\mu}/\mu \ge \abs{x'}\). By the Intermediate value theorem, there exists \(t \in (0, \tau )\) such that \(h(t) = \abs{x'}\). Thus, \(t \frac{x'}{\abs{x'}} \in B_\tau^\ell\) and \(\Phi(t \frac{x'}{\abs{x'}}, x'') = x\).
\end{proof}

\begin{proof}[Proof of statement \((\ref{item1132})\)]
Proceeding as in the proof of Lemma~\ref{lemmaThickeningAroundDualSqueleton}, one gets for every \(x \in B^{\ell}_{1}\times \R^{m-\ell}\),
\begin{equation*}
\abs{D^j\Phi(x)}\le\frac{\NewConstant}{\zeta(x)^j}.
\end{equation*}
Since \(\zeta(x) \ge \tau\sqrt{\varepsilon}\), we deduce that
\begin{equation*}
\abs{D^j\Phi(x)}\le\frac{\SameConstant}{(\tau\sqrt{\varepsilon})^j} \le \frac{\Constant}{\tau^j}.
\end{equation*}
On the other hand,
\[
\jac \Phi(x)
 =\varphi(\zeta(x))^{\ell-1}\Bigl(\varphi(\zeta(x)) \Bigl(1 - \frac{\abs{x'}^2}{\zeta(x)^2}\Bigr) + \big(\varphi^{(1)}(\zeta(x))\zeta(x) + \varphi(\zeta(x)) \big)\frac{\abs{x'}^2}{\zeta(x)^2} \Bigr).
\]
Since for every \(s > 0\), \(\varphi^{(1)}(s)s + \varphi(s) \ge 0\), we have
\[
\jac \Phi(x) \ge \varphi(\zeta(x))^{\ell} \Bigl(1 - \frac{\abs{x'}^2}{\zeta(x)^2}\Bigr) \ge  \frac{\Constant}{\zeta(x)^\ell} \Bigl(1 - \frac{\abs{x'}^2}{\zeta(x)^2}\Bigr).
\]
If \(x\in B^{\ell}_{\tau}\times Q^{m-\ell}_{\frac{1-\overline{\mu}}{\mu}}\), then \(\zeta(x) \leq \tau \sqrt{1 + \varepsilon}\) and \(\zeta(x)^2 \ge (1 + \varepsilon)|x'|^2\). Thus,
\[
\jac \Phi(x) \ge \frac{\SameConstant} {(\tau \sqrt{1 + \varepsilon})^\ell} \frac{\varepsilon}{1 + \varepsilon} = \frac{\Constant}{\tau^\ell}.
\]
Combining the estimates of \(|D^j \Phi|\) and \(\jac{\Phi}\), we have the conclusion.
\end{proof}

In order to establish  the remaining properties stated in Lemma~\ref{lemmaThickeningAroundDualSqueletonmu}, we only need to repeat the proof  of Lemma~\ref{lemmaThickeningAroundDualSqueleton} with obvious modifications.
\end{proof}

\begin{proof}[Proof of Proposition~\ref{lemmaThickeningFaceNearDualSkeleton}]
Define \(\Phi\) to be the composition of the map  \(\Phi_1\)  given by Lemma~\ref{lemmaThickeningAroundPrimalSqueletonmu} with \(\kappa=\frac{\underline{\mu}}{\mu\sqrt{\ell}}\) together with the map  \(\Phi_2\)  given by Lemma~\ref{lemmaThickeningAroundDualSqueletonmu}; more precisely, \(\Phi=\Phi_1\circ \Phi_2\).  The propeties of \(\Phi\) can be established as in the case of thickening.
\end{proof}


\section{Proof of Theorem~\ref{theoremDensityManifoldMain}}

Let \(\cK^m\) be a cubication of \(Q_1^m\) of radius \(\eta > 0\) and let \(\cT^{\ell^*}\) be the dual skeleton with respect to \(\cK^\ell\) for some \(\ell \in \{0, \dots, m-1\}\). 

\begin{claim}
Let \(v \in C^\infty(K^m \setminus T^{\ell^*}; N^n) \cap W^{k, p}(K^m; N^n)\). 
If \(\pi_\ell(N^n) \simeq \{0\}\) and if \(\ell^* < m - kp\), then there exists a family of smooth maps \(v^\mathrm{sh}_{\tau_\mu, \mu} : K^m \to N^n\) such that
\[
\lim_{\mu \to 0}{\norm{ v^\mathrm{sh}_{\tau_\mu, \mu} -  v}_{W^{k, p}(K^m)}} = 0.
\]
\end{claim}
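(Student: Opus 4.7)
The plan is to combine the continuous-extension property with the shrinking construction, and then to optimise $\tau$ diagonally in $\mu$. Since $v$ is smooth on $K^m \setminus T^{\ell^*}$ and $K^\ell$ is disjoint from $T^{\ell^*}$, the restriction $v|_{K^\ell}$ is smooth; using $\pi_\ell(N^n) = \{0\}$, Proposition~\ref{propositionContinuousExtensionProperty} (applied to each $m$-cube of $\cK^m$, the required consistency on shared faces reducing to agreement on $K^\ell$) supplies a continuous extension $f : K^m \to N^n$ of $v|_{K^\ell}$, and Proposition~\ref{propositionSmoothExtension} then produces, for every $0 < \mu < 1$, a smooth map $v^\mathrm{ex}_\mu \in C^\infty(K^m; N^n)$ that coincides with $v$ on $K^m \setminus (T^{\ell^*} + Q^m_{\mu\eta})$.

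Next, for each $0 < \tau < 1/2$, let $\Phi : \R^m \to \R^m$ be the smooth map provided by Proposition~\ref{lemmaThickeningFaceNearDualSkeletonGlobal} with parameters $\mu$ and $\tau$, relative to a cubication of $\R^m$ extending $\cK^m$, and set
\[
  v^\mathrm{sh}_{\tau, \mu} = v^\mathrm{ex}_\mu \circ \Phi.
\]
Since $\Phi$ is smooth, satisfies $\Supp{\Phi} \subset T^{\ell^*} + Q^m_{2\mu\eta}$, and preserves every cube of the cubication, the composition $v^\mathrm{sh}_{\tau, \mu}$ belongs to $C^\infty(K^m; N^n)$ and agrees with $v$ outside $T^{\ell^*} + Q^m_{2\mu\eta}$. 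Together with the compactness of $N^n$ and $\bigabs{T^{\ell^*} + Q^m_{2\mu\eta}} = O(\mu^{\ell+1})$, this already gives $\norm{v^\mathrm{sh}_{\tau, \mu} - v}_{L^p(K^m)} \to 0$ as $\mu \to 0$, and for $1 \le j \le k$ the triangle inequality reduces the derivative estimate to controlling $\norm{D^j v^\mathrm{sh}_{\tau, \mu}}_{L^p(K^m \cap (T^{\ell^*} + Q^m_{2\mu\eta}))}$ and $\norm{D^j v}_{L^p(T^{\ell^*} + Q^m_{2\mu\eta})}$, the latter vanishing with $\mu$ by absolute continuity of the integral.

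Corollary~\ref{corollaryShrinkingWkpEstimate} applied to $u = v^\mathrm{ex}_\mu$ splits the former into an annular contribution over $(T^{\ell^*} + Q^m_{2\mu\eta}) \setminus (T^{\ell^*} + Q^m_{\mu\eta})$, on which $v^\mathrm{ex}_\mu = v$, and an inner contribution over $T^{\ell^*} + Q^m_{\mu\eta}$ carrying the prefactor $\tau^{(\ell+1-kp)/p}$; this exponent is strictly positive because the hypothesis $\ell^* < m - kp$ is exactly $\ell + 1 > kp$. For the annular sum I would use the Gagliardo-Nirenberg interpolation inequality to place $D^i v \in L^{kp/i}(K^m)$, then H\"older's inequality together with $\bigabs{T^{\ell^*} + Q^m_{2\mu\eta}} \lesssim \mu^{\ell+1}$ to bound the $i$-th summand after dividing by $(\mu\eta)^j$ by a constant times $\mu^{i - j + (\ell+1)(k-i)/(kp)}$; a short algebraic check using $\ell + 1 > kp$ shows that this exponent is strictly positive for every $1 \le i \le j \le k$, so the whole annular contribution tends to zero as $\mu \to 0$.

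The inner sum, by contrast, is only a finite quantity $C_\mu$ for each $\mu$, determined by the smooth but a priori qualitatively controlled map $v^\mathrm{ex}_\mu$ — and this is where the main difficulty lies, since the extension construction provides no uniform $W^{k,p}$ bound near $T^{\ell^*}$. The decisive point is that the shrinking estimate carries a \emph{strictly positive} power of $\tau$ in front of $C_\mu$; one may therefore pick $\tau = \tau_\mu \to 0$ fast enough, depending on $C_\mu$, to make $\tau_\mu^{(\ell+1-kp)/p} C_\mu \to 0$. Combining the three contributions then yields $\lim_{\mu \to 0}\norm{v^\mathrm{sh}_{\tau_\mu, \mu} - v}_{W^{k,p}(K^m)} = 0$, as claimed.
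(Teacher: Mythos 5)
Your argument follows the paper's proof essentially step for step: obtain a continuous extension $f$ of $v|_{K^\ell}$ using $\pi_\ell(N^n)=\{0\}$, smooth it near $T^{\ell^*}$ via Proposition~\ref{propositionSmoothExtension} to get $v^\mathrm{ex}_\mu$ agreeing with $v$ outside $T^{\ell^*}+Q^m_{\mu\eta}$, compose with the shrinking map $\Phi^\mathrm{sh}_{\tau,\mu}$, invoke Corollary~\ref{corollaryShrinkingWkpEstimate} to split the derivative norm into a part controlled by $\norm{D^i v}$ on the thin tube (handled by Gagliardo--Nirenberg, H\"older, and $\bigl|K^m\cap(T^{\ell^*}+Q^m_{2\mu\eta})\bigr|\lesssim\mu^{\ell+1}$) and a part with the $\tau^{(\ell+1-kp)/p}$ prefactor, and finally pick $\tau_\mu\to 0$ diagonally. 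This is exactly the paper's route, so there is nothing genuinely new to compare.

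One inaccurate aside: the parenthetical ``applied to each $m$-cube of $\cK^m$, the required consistency on shared faces reducing to agreement on $K^\ell$'' does not hold as stated when $\ell < m-1$ (which is the generic case, since $\ell=\lfloor kp\rfloor$ and $kp<m$): extensions built independently over adjacent $m$-cubes agree only on the $\ell$-skeleton of the shared face, not on the entire $(m-1)$-face, so the pieced map need not be continuous. The paper avoids this by applying Proposition~\ref{propositionContinuousExtensionProperty} once to $K^m = Q_1^m$, which is itself a cube and hence meets the proposition's hypothesis directly; that is the argument you should cite.
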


This claim is a removable singularity property of topological nature for \(W^{k, p}\) maps. 
Theorem~\ref{theoremDensityManifoldMain} follows from Theorem~\ref{theoremDensityManifoldNontrivialwithouttopologicalcondition} and this claim. 
Indeed, by Theorem~\ref{theoremDensityManifoldNontrivialwithouttopologicalcondition} the class of maps \(v\) in the claim is dense in \(W^{k, p}(K^m; N^n)\) when \(\ell = \lfloor kp \rfloor\). Since the maps \(v^\mathrm{sh}_{\tau_\mu, \mu}\) are smooth and converge to \(v\) in \(W^{k, p}\), we deduce that smooth maps are dense in \(W^{k, p}(K^m; N^n)\).

\begin{proof}[Proof of the Claim]
Assuming that \(\pi_\ell(N^n) \simeq \{0\}\), we can modify \(v\) in a neighborhood of \(T^{\ell^*}\) in order to obtain a smooth map \(v^\mathrm{ex}_\mu : K^m \to N^n\). More precisely, for every  \(0 < \mu < 1\), by  Proposition~\ref{propositionSmoothExtension} and Proposition~\ref{propositionContinuousExtensionProperty}, there exists \(v^\mathrm{ex}_\mu \in C^\infty(K^m; N^n)\) such that \(v^\mathrm{ex}_\mu = v\) in \(K^m \setminus (T^{\ell^*} + Q_{\mu\eta}^m)\). 

Although \(v\) and \(v^\mathrm{ex}_\mu\) coincide in a large set, \(\norm{v^\mathrm{ex}_\mu}_{W^{k, p}(K^m)}\) can be much larger than \(\norm{v}_{W^{k, p}(K^m)}\) since the extension is of topological nature and does not take into account the values of \(v\) in a neighborhood of \(T^{\ell^*}\). 
In order to get a better extension of \(v\), we have to shrink \(T^{\ell^*} + Q_{\mu\eta}^m\) into a smaller neighborhood of \(T^{\ell^*}\).

Assume that \(\mu < \frac{1}{2}\) and take \(0< \tau< \frac{1}{2}\). Let \(\Phi^\mathrm{sh}_{\tau, \mu} :\R^m \to \R^m\) be the smooth diffeomorphism given by Proposition~\ref{lemmaThickeningFaceNearDualSkeletonGlobal}. 
Define 
\[
v^\mathrm{sh}_{\tau, \mu}= (v^\mathrm{ex}_\mu \circ \Phi^\mathrm{sh}_{\tau, \mu}).
\]
In particular \(v^\mathrm{sh}_{\tau, \mu} \in C^\infty(K^m; N^n)\). 

Since \(v^\mathrm{sh}_{\tau, \mu} = v\) in the complement of \(T^{\ell^\ast} + Q^{m}_{2\mu\eta}\), for every \(j \in \N_*\),
\begin{align*}
\norm{D^j v^\mathrm{sh}_{\tau, \mu} - D^j v}_{L^p(K^m)} 
& = \norm{D^j v^\mathrm{sh}_{\tau, \mu} - D^j v}_{L^p(K^m\cap(T^{\ell^\ast} + Q^{m}_{2\mu\eta}))}\\
& \le \norm{D^j v^\mathrm{sh}_{\tau, \mu}}_{L^p(K^m\cap(T^{\ell^\ast} + Q^{m}_{2\mu\eta}))} + \norm{D^j v}_{L^p(K^m\cap(T^{\ell^\ast} + Q^{m}_{2\mu\eta}))}.
\end{align*}
If \(\ell^* < m-kp\), or equivalently if \(\ell + 1 > kp\), then by Corollary~\ref{corollaryShrinkingWkpEstimate} we have for every \(j \in \{1, \dotsc, k\}\),
\begin{multline*}
(\mu \eta)^{j} \norm{D^jv^\mathrm{sh}_{\tau, \mu}}_{L^p(K^m \cap (T^{\ell^*} + Q^m_{2\mu\eta}))}\\
\leq 
\NewConstant \sum_{i=1}^j  (\mu\eta)^{i} \norm{D^i v^\mathrm{ex}_\mu}_{L^p(K^m \cap (T^{\ell^*} + Q^m_{2\mu\eta}) \setminus (T^{\ell^*} + Q^m_{\mu\eta}))}\\
 \qquad + \SameConstant \tau^{\frac{\ell+1-kp}{p}}\sum_{i=1}^j  (\mu\eta)^{i} \norm{D^i v^\mathrm{ex}_\mu}_{L^p(K^m \cap (T^{\ell^*} + Q^m_{\mu\eta}))}.
\end{multline*}
Since \(v^\mathrm{ex}_{\mu} = v\) in the complement of \(T^{\ell^\ast} + Q^{m}_{\mu\eta}\), we deduce that
\begin{multline*}
(\mu \eta)^{j} \norm{D^j v^\mathrm{sh}_{\tau, \mu} - D^j v}_{L^p(K^m)}
\le \Constant \sum_{i=1}^j  (\mu\eta)^{i} \norm{D^i v}_{L^p(K^m \cap (T^{\ell^*} + Q^m_{2\mu\eta}) )}\\
+ C_1 \tau^{\frac{\ell+1-kp}{p}}\sum_{i=1}^j  (\mu\eta)^{i} \norm{D^i v^\mathrm{ex}_\mu}_{L^p(K^m \cap (T^{\ell^*} + Q^m_{\mu\eta}))}.
\end{multline*}

We show that
\begin{equation}
\label{equationConvergenceNearDualSkeleton}
\lim_{\mu \to 0}{\sum_{i=1}^j  (\mu\eta)^{i-j} \norm{D^i v}_{L^p(K^m \cap (T^{\ell^*} + Q^m_{2\mu\eta}) )}} = 0.
\end{equation}

Since \(N^n\) is a compact subset of \(\R^\nu\), \(v\) is  bounded. 
By the Gagliardo-Nirenberg interpolation inequality, for every \(i \in \{1, \dots, k-1\}\), \(D^i v \in L^{\frac{kp}{i}}(K^m)\). 
By H\"older's inequality, for every \(i\in \{1, \dotsc, k\}\) we then have
\begin{multline*}
(\mu\eta)^{i-j} \norm{D^i v}_{L^p(K^m\cap( T^{\ell^\ast} + Q^{m}_{2\mu\eta}))} \\
\begin{aligned}
& \le (\mu\eta)^{i-j} \bigabs{K^m\cap( T^{\ell^\ast} + Q^{m}_{2\mu\eta})}^{\frac{k-i}{kp}} \norm{D^i v}_{L^{\frac{kp}{i}}(K^m\cap( T^{\ell^\ast} + Q^{m}_{2\mu\eta}))}\\
& = \eta^{i-j} \mu^{k - j + (\ell + 1 - kp) \frac{k - i}{kp}} \bigg( \frac{\bigabs{K^m\cap( T^{\ell^\ast} + Q^{m}_{2\mu\eta})}}{\mu^{\ell + 1}} \bigg)^\frac{k-i}{kp} \times
\end{aligned}\\
\times \norm{D^i v}_{L^{\frac{kp}{i}}(K^m\cap( T^{\ell^\ast} + Q^{m}_{2\mu\eta}))}.
\end{multline*}
Since \(\bigabs{K^m\cap( T^{\ell^\ast} + Q^{m}_{2\mu\eta})}\leq \Constant \mu^{\ell+1}\), the limit follows.

For every \(0 < \mu < \frac{1}{2}\), take \(0 < \tau_\mu < \frac{1}{2}\) such that
\begin{equation}
\label{equationConvergenceTau}
\lim_{\mu \to 0}{\tau_\mu^{\frac{\ell+1-kp}{p}}\sum_{i=1}^j  (\mu\eta)^{i-j} \norm{D^i v^\mathrm{ex}_\mu}_{L^p(K^m \cap (T^{\ell^*} + Q^m_{\mu\eta}))}} = 0.
\end{equation}
From \eqref{equationConvergenceNearDualSkeleton} and \eqref{equationConvergenceTau}, we deduce that for every \(j \in \{1, \dots, k\}\),
\[
\lim_{\mu \to 0}{\norm{D^j v^\mathrm{sh}_{\tau_\mu, \mu} -  D^j v}_{L^{p}(K^m)}} = 0.
\]
Since \(v^\mathrm{sh}_{\tau_\mu, \mu}\) converges in measure to \(v\) as \(\mu\) tends to \(0\), we then have
\[
\lim_{\mu \to 0}{\norm{ v^\mathrm{sh}_{\tau_\mu, \mu} -  v}_{W^{k, p}(K^m)}} = 0.
\]
This establishes the claim.
\end{proof}



\section{Concluding remarks}

\subsection{Other domains}

The proof of Theorem~\ref{theoremDensityManifoldMain} can be adapted to more general domains \(\Omega \subset \R^m\). 
In order to apply the extension argument at the beginning of the proof of Theorem~\ref{theoremDensityManifoldNontrivialwithouttopologicalcondition}, it suffices that \(\Omega\) be starshaped.

Concerning Theorem~\ref{theoremDensityManifoldMain}, the crucial tool is the extension property of Proposition~\ref{lemmaContinuousExtensionProperty}. 
This can be enforced by assuming that for every \(\ell \in \{0, \dots, \floor{kp} - 1\}\),
\[
\pi_\ell(\Omega) \simeq \{0\}.
\]
This contains in particular the case where \(\Omega\) is starshaped. Another option is to require that for some CW-complex structure, \(\overline{\Omega}\) has the \(\floor{kp} - 1\) extension property with respect to \(N^n\). More precisely, for every \(u \in C^0(\overline{\Omega}^{\floor{kp}}; N^n)\), the restriction \(u \vert_{\overline{\Omega}^{\floor{kp}-1}}\) of \(u\) to the skeleton of \(\overline\Omega\) of dimension \(\floor{kp} - 1\) has a continuous extension to \(\overline{\Omega}\). It can be showed that this property does not depend on the CW-complex structure of \(\overline{\Omega}\) (see remark following \cite[Definition~2.3]{Hang-Lin}).

\subsection{Complete manifolds}

The proofs of Theorems~\ref{theoremDensityManifoldMain} and~\ref{theoremDensityManifoldNontrivialwithouttopologicalcondition} still apply 
for complete manifolds \(N^n\) that are embedded in \(\R^\nu\) and for which there exists a projection \(\Pi\) defined on a uniform neighborhood of size \(\iota\) around  \(N^n\). The compactness of \(N^n\) ensures the Gagliardo-Nirenberg interpolation inequality that for every \(i \in \{1, \dotsc, k-1\}\), \(D^i u \in L^\frac{kp}{i}(Q^m_1)\). 
This inequality still holds if the assumption \(u \in L^\infty\) is replaced by \(u \in W^{1, kp}\). 
In this case, one proves that if \(\pi_{\floor{kp}(N^n)} \simeq \{0\}\), then for every 
\(
u \in W^{k, p}(Q^m; N^n) \cap W^{1, kp}(Q^m; N^n)
\)
there exists a family of maps \(u_\eta \in C^\infty(Q^m; N^n)\) such that for every \(i \in \{1, \dotsc, k\}\),
\[ 
\lim_{\eta \to 0} \norm{D^i u_\eta - D^i u}_{L^\frac{kp}{i}(Q^m)} = 0
\]
and \(u_\eta\) converges to \(u\) in measure as \(\eta\) tends to \(0\). Hence,
\[ 
\lim_{\eta \to 0} \norm{u_\eta - u}_{W^{k, p}(Q^m) \cap W^{1, kp}(Q^m)} = 0.
\]

\subsection{$\floor{kp}$ simply connected manifolds}

Under the additional assumption that for every \(\ell \in \{0, \dots, \floor{kp}\}\),
\[
\pi_{\ell}(N^n) \simeq \{0\},
\]
it is possible to give a simpler proof of \(H^{k, p}(Q^m; N^n) = W^{k, p}(Q^m; N^n)\)
without relying on the density of maps in \(R_{m - \floor{kp} - 1}(Q^m; N^n)\).
This approach is inspired by previous works of Escobedo~\cite{Escobedo} and Haj\l asz~\cite{Hajlasz};
see \cite{Bousquet-Ponce-VanSchaftingen-2013} for details.

\bigskip
\footnotesize
\noindent\textit{Acknowledgments.}
The authors would like to thank Petru Mironescu for enlightening discussions and for his encouragement.  
The second (ACP) and third (JVS) authors were supported by the Fonds de la Recherche scientifique---FNRS.





\begin{thebibliography}{10}
\expandafter\ifx\csname url\endcsname\relax
  \def\url#1{\texttt{#1}}\fi
\expandafter\ifx\csname urlprefix\endcsname\relax\def\urlprefix{URL }\fi
\providecommand{\selectlanguage}[1]{\relax}
\providecommand{\eprint}[2][]{\url{#2}}

\addcontentsline{toc}{section}{References}

\bibitem{AngelsbergPumberger2009}
{Angelsberg}, G., {Pumberger}, D., \emph{A regularity result for
  polyharmonic maps with higher integrability}, Ann. Global Anal. Geom.
  \textbf{35} (2009), 63--81.

\bibitem{BallZarnescu2011}
{Ball}, J.~M., {Zarnescu}, A., \emph{Orientability and energy
  minimization in liquid crystal models}, Arch. Ration. Mech. Anal.
  \textbf{202} (2011), 493--535.

\bibitem{Bethuel}
{Bethuel}, F., \emph{The approximation problem for {S}obolev maps
  between two manifolds}, Acta Math. \textbf{167} (1991), 153--206.

\bibitem{BethuelZheng}
{Bethuel}, F., {Zheng}, X.~M., \emph{Density of smooth functions
  between two manifolds in {S}obolev spaces}, J. Funct. Anal. \textbf{80}
  (1988), 60--75.

\bibitem{BourgainBrezisMironescu2005}
{Bourgain}, J., {Brezis}, H., {Mironescu}, P.,
  \emph{Lifting, degree, and distributional jacobian revisited}, Comm. Pure
  Appl. Math. \textbf{58} (2005), 529--551.

\bibitem{Bousquet-Ponce-VanSchaftingen}
{Bousquet}, P., {Ponce}, A.~C., {Van~Schaftingen}, J.,
  \emph{A case of density in {$W^{2,p}(M;N)$}}, C. R. Math. Acad. Sci. Paris
  \textbf{346} (2008), 735--740.

\bibitem{Bousquet-Ponce-VanSchaftingen-2013}
{Bousquet}, P., {Ponce}, A.~C., {Van~Schaftingen}, J.,
  \emph{Density of smooth maps for fractional {S}obolev spaces {$W^{s, p}$}
  into $\ell $ simply connected manifolds when $s \ge 1$}, Confluentes Math.
  \textbf{5} (2013), 3--22.

\bibitem{Brezis2003}
{Brezis}, H., \emph{The interplay between analysis and topology in some
  nonlinear {PDE} problems}, Bull. Amer. Math. Soc. (N.S.) \textbf{40} (2003),
  179--201.

\bibitem{BrezisCoronLieb1986}
{Brezis}, H., {Coron}, J.-M., {Lieb}, E.~H.,
  \emph{Harmonic maps with defects}, Comm. Math. Phys. \textbf{107} (1986),
  649--705.

\bibitem{Brezis-Li}
{Brezis}, H., {Li}, Y., \emph{Topology and {S}obolev spaces}, J.
  Funct. Anal. \textbf{183} (2001), 321--369.

\bibitem{Brezis-Mironescu}
{Brezis}, H., {Mironescu}, P., \emph{On some questions of
  topology for {$S^1$}-valued fractional {S}obolev spaces}, RACSAM. Rev. R.
  Acad. Cienc. Exactas F\'\i s. Nat. Ser. A Mat. \textbf{95} (2001), 121--143.

\bibitem{BrezisMironescu2015}
{Brezis}, H., {Mironescu}, P., \emph{Density in \(W^{s, p}(Q^{m}; N^{n})\)}, J. Funct. Anal. In press.

\bibitem{Brezis-Nirenberg}
{Brezis}, H., {Nirenberg}, L., \emph{Degree theory and {BMO}.
  {I.} {C}ompact manifolds without boundaries}, Selecta Math. (N.S.) \textbf{1}
  (1995), 197--263.

\bibitem{Campanato1963}
{Campanato}, S., \emph{Propriet\`a di h\"olderianit\`a di alcune classi
  di funzioni}, Ann. Scuola Norm. Sup. Pisa (3) \textbf{17} (1963), 175--188.

\bibitem{ChangWangYang1999}
{Chang}, S.-Y.~A., {Wang}, L., {Yang}, P.~C., \emph{A
  regularity theory of biharmonic maps}, Comm. Pure Appl. Math. \textbf{52}
  (1999), 1113--1137.

\bibitem{Deny-Lions}
{Deny}, J., {Lions}, J.-L., \emph{Les espaces du type de {B}eppo
  {L}evi}, Ann. Inst. Fourier (Grenoble) \textbf{5} (1954), 305--370.

\bibitem{Escobedo}
{Escobedo}, M., \emph{Some remarks on the density of regular mappings in
  {S}obolev classes of {$S^M$}-valued functions}, Rev. Mat. Univ. Complut.
  Madrid \textbf{1} (1988), 127--144.

\bibitem{Federer-Fleming}
{Federer}, H., {Fleming}, W.~H., \emph{Normal and integral
  currents}, Ann. of Math. (2) \textbf{72} (1960), 458--520.

\bibitem{Gagliardo}
{Gagliardo}, E., \emph{Ulteriori propriet\`a di alcune classi di
  funzioni in pi\`u variabili}, Ricerche Mat. \textbf{8} (1959), 24--51.

\bibitem{Gastel-Nerf}
{Gastel}, A., {Nerf}, A.~J., \emph{Density of smooth maps in
  {$W^{k,p}(M,N)$} for a close to critical domain dimension}, Ann. Global Anal.
  Geom. \textbf{39} (2011), 107--129.

\bibitem{GastelScheven2009}
{Gastel}, A., {Scheven}, C., \emph{Regularity of polyharmonic
  maps in the critical dimension}, Comm. Anal. Geom. \textbf{17} (2009),
  185--226.

\bibitem{GiaquintaModicaSoucek1998}
{Giaquinta}, M., {Modica}, G., {Sou{\v {c}}ek}, J.,
  Cartesian currents in the calculus of variations. {II}, 
  \emph{Ergebnisse der Mathematik und ihrer Grenzgebiete}, Vol.~38, Springer, Berlin, 1998.

\bibitem{GiaquintaMucci2006}
{Giaquinta}, M., {Mucci}, D., Maps into manifolds and currents:
  area and {$W^{1,2}$}-, {$W^{1/2}$}-, {BV}-energies, Centro di Ricerca
  Matematica Ennio De Giorgi (CRM) Series, Vol.~3, Edizioni della Normale,
  Pisa, 2006.

\bibitem{GoldsteinStrzeleckiZatorska2009}
{Goldstein}, P., {Strzelecki}, P., {Zatorska-Goldstein},
  A., \emph{On polyharmonic maps into spheres in the critical dimension}, Ann.
  Inst. H. Poincar\'e Anal. Non Lin\'eaire \textbf{26} (2009), 1387--1405.

\bibitem{GongLammWang2012}
{Gong}, H., {Lamm}, T., {Wang}, C., \emph{Boundary partial
  regularity for a class of biharmonic maps}, Calc. Var. Partial Differential
  Equations \textbf{45} (2012), 165--191.

\bibitem{Hajlasz}
{Haj{\l }asz}, P., \emph{Approximation of {S}obolev mappings}, Nonlinear
  Anal. \textbf{22} (1994), 1579--1591.

\bibitem{Hang-Lin-2001}
{Hang}, F., {Lin}, F.-H., \emph{Topology of {S}obolev mappings,
  {II}},  Math. Res. Lett. \textbf{8} (2001), 321--330.
  
\bibitem{Hang-Lin}
{Hang}, F., {Lin}, F.-H., \emph{Topology of {S}obolev mappings,
  {II}}, Acta Math. \textbf{191} (2003), 55--107.

\bibitem{HangLin2003}
{Hang}, F., {Lin}, F.-H., \emph{Topology of {S}obolev mappings.
  {III}}, Comm. Pure Appl. Math. \textbf{56} (2003), 1383--1415.

\bibitem{Hardt-Kinderlehrer-Lin}
{Hardt}, R., {Kinderlehrer}, D., {Lin}, F.-H.,
  \emph{Stable defects of minimizers of constrained variational principles},
  Ann. Inst. H. Poincar\'e Anal. Non Lin\'eaire \textbf{5} (1988), 297--322.

\bibitem{Hardt-Riviere}
{Hardt}, R., {Rivi\`ere}, T., \emph{Sequential weak approximation
  for maps of finite {H}essian energy},
   Calc. Var. Partial Differential Equations.
   In press.

\bibitem{Jones1980}
{Jones}, P.~W., \emph{Extension theorems for {BMO}}, Indiana Univ. Math.
  J. \textbf{29} (1980), 41--66.

\bibitem{LammWang2009}
{Lamm}, T., {Wang}, C., \emph{Boundary regularity for
  polyharmonic maps in the critical dimension}, Adv. Calc. Var. \textbf{2}
  (2009), 1--16.

\bibitem{LinWang2008}
{Lin}, F.-H., {Wang}, C., The analysis of harmonic maps and their
  heat flows, World Scientific Publishing Co. Pte. Ltd., Hackensack, NJ, 2008.

\bibitem{Mermin1979}
{Mermin}, N.~D., \emph{The topological theory of defects in ordered
  media}, Rev. Modern Phys. \textbf{51} (1979), 591--648.

\bibitem{Meyers-Serrin}
{Meyers}, N.~G., {Serrin}, J., \emph{{$H=W$}}, Proc. Nat. Acad.
  Sci. U.S.A. \textbf{51} (1964), 1055--1056.

\bibitem{Mironescu}
{Mironescu}, P., \emph{On some properties of {$S^1$}-valued fractional
  {S}obolev spaces}, in: Noncompact problems at the intersection of geometry,
  analysis, and topology, \emph{Contemp. Math.}, Vol.~350, Amer. Math. Soc.,
  Providence, RI, 2004, pp. 201--207.

\bibitem{Moser2005}
{Moser}, R., Partial regularity for harmonic maps and related problems,
  World Scientific Publishing Co. Pte. Ltd., Hackensack, NJ, 2005.

\bibitem{Moser2008}
{Moser}, R., \emph{A variational problem pertaining to biharmonic maps},
  Comm. Partial Differential Equations \textbf{33} (2008), 1654--1689.

\bibitem{Mucci2012}
{Mucci}, D., \emph{Maps into projective spaces: liquid crystal and
  conformal energies}, Discrete Contin. Dyn. Syst. Ser. B \textbf{17} (2012),
  597--635.

\bibitem{MucciNicolodi2012}
{Mucci}, D., {Nicolodi}, L., \emph{On the elastic energy density
  of constrained {$Q$}-tensor models for biaxial nematics}, Arch. Rational
  Mech. Anal. \textbf{206} (2012), 853--884.

\bibitem{Nash-1954}
{Nash}, J., \emph{$C^1$ isometric imbeddings}, Ann. of Math. (2)
  \textbf{60} (1954), 383--396.

\bibitem{Nash-1956}
{Nash}, J., \emph{The imbedding problem for {R}iemannian manifolds},
  Ann. of Math. (2) \textbf{63} (1956), 20--63.

\bibitem{Nirenberg1959}
{Nirenberg}, L., \emph{On elliptic partial differential equations}, Ann.
  Scuola Norm. Sup. Pisa (3) \textbf{13} (1959), 115--162.

\bibitem{PakzadRiviere2003}
{Pakzad}, M.~R., {Rivi{\`e}re}, T., \emph{Weak density of smooth
  maps for the {D}irichlet energy between manifolds}, Geom. Funct. Anal.
  \textbf{13} (2003), 223--257.

\bibitem{Ponce-VanSchaftingen}
{Ponce}, A.~C., {Van~Schaftingen}, J., \emph{Closure of smooth
  maps in {$W^{1,p}(B^3;S^2)$}}, Differential Integral Equations \textbf{22}
  (2009), 881--900.

\bibitem{Scheven2008}
{Scheven}, C., \emph{Dimension reduction for the singular set of
  biharmonic maps}, Adv. Calc. Var. \textbf{1} (2008), 53--91.

\bibitem{SchoenUhlenbeck}
{Schoen}, R., {Uhlenbeck}, K., \emph{Boundary regularity and the
  dirichlet problem for harmonic maps}, J. Differential Geom. \textbf{18}
  (1983), 253--268.

\bibitem{Simon1996}
{Simon}, L., Theorems on regularity and singularity of energy minimizing
  maps, Lectures in Mathematics ETH Z\"urich, Birkh\"auser Verlag, Basel, 1996.

\bibitem{Stein}
{Stein}, E.~M., Singular integrals and differentiability properties of
  functions, Princeton Mathematical Series, No. 30, Princeton University Press, Princeton, NJ, 1970.

\bibitem{Struwe2008}
{Struwe}, M., \emph{Partial regularity for biharmonic maps, revisited},
  Calc. Var. Partial Differential Equations \textbf{33} (2008), 249--262.

\bibitem{Urakawa2011}
{Urakawa}, H., \emph{The geometry of biharmonic maps}, in: Harmonic maps
  and differential geometry, \emph{Contemp. Math.}, Vol.~542, Amer. Math.
  Soc., Providence, RI, 2011, pp. 159--175.

\end{thebibliography}
\end{document}